\documentclass{amsart}
\usepackage{amscd,amsmath,amssymb,amsfonts,bbm}
\usepackage[utf8]{inputenc}
\usepackage[T1]{fontenc}
\usepackage{lmodern}
\usepackage[unicode,pdfborder={0 0 0},final]{hyperref}
\usepackage{mathtools}
\usepackage{enumerate}
\usepackage[all,cmtip]{xy}
\usepackage{footmisc}

\newtheorem{thm}{Theorem}[section]
\newtheorem{prop}[thm]{Proposition}
\newtheorem{lem}[thm]{Lemma}
\newtheorem{cor}[thm]{Corollary}
\newtheorem{property}[thm]{Property}

\theoremstyle{definition}
\newtheorem{Def}[thm]{Definition}

\theoremstyle{remark}
\newtheorem{rem}[thm]{Remark}
\newtheorem{ex}[thm]{Example}

\numberwithin{equation}{section}

\newcommand{\bF}{\mathbb{F}}
\newcommand{\Q}{\mathbb{Q}}
\newcommand{\sH}{\mathcal{H}}
\newcommand{\sI}{\mathcal{I}}
\newcommand{\sO}{\mathcal{O}}
\newcommand{\sL}{\mathcal{L}}
\newcommand{\sE}{\mathcal{E}}
\newcommand{\et}{\mathrm{\acute{e}t}}
\DeclareMathOperator{\Br}{Br}
\DeclareMathOperator{\AJ}{AJ}
\newcommand{\Ab}{\mathrm{Ab}}
\newcommand{\alg}{\mathrm{alg}}

 \DeclareMathOperator{\Spec}{Spec}

\DeclareMathOperator{\Ker}{Ker}
\DeclareMathOperator{\Pic}{Pic}

\DeclareMathOperator{\Sym}{Sym}

\DeclareMathOperator{\Ima}{Im}
\newcommand{\Id}{\mathrm{Id}}

\newcommand{\ppav}{\mathrm{ppav}}
\DeclareMathOperator{\Aut}{Aut}
\newcommand{\tors}{\mathrm{tors}}

\newcommand{\bS}{\mathbb{S}}

\newcommand{\R}{\mathbb{R}}
\newcommand{\bP}{\mathbb{P}}
\newcommand{\bA}{\mathbb{A}}
\renewcommand{\C}{\mathbb{C}}
\DeclareMathOperator{\Hom}{Hom}
\DeclareMathOperator{\NS}{NS}

\DeclareMathOperator{\Gal}{Gal}
\DeclareMathOperator{\Res}{Res}
\newcommand{\Z}{\mathbb{Z}}
\newcommand{\nr}{\mathrm{nr}}
\DeclareMathOperator{\CH}{CH}

\newcommand{\ci}{\mathcal{C}^{\infty}}
\newcommand{\torsion}{\mathrm{torsion}}
\newcommand{\cl}{\mathrm{cl}}
\newcommand{\surj}{\twoheadrightarrow}
\newcommand{\inj}{\hookrightarrow}

\hyphenation{pro-perty}
\hyphenpenalty=500
\pretolerance=530

\begin{document}

\date{March 19th, 2019; revised on December 18th, 2019}
\title[The Clemens--Griffiths method over non-closed fields]{The Clemens--Griffiths method over\\non-closed fields}

 \author{Olivier Benoist}
\address{D\'epartement de math\'ematiques et applications, \'Ecole normale sup\'erieure, 45~rue d'Ulm, 75230 Paris Cedex 05, France}
 \email{olivier.benoist@ens.fr}

 \author{Olivier Wittenberg}
 \address{Institut de Math\'ematique d'Orsay, B\^atiment 307, Universit\'e Paris-Sud, 91405 Orsay Cedex, France}
\email{olivier.wittenberg@math.u-psud.fr}

\renewcommand{\abstractname}{Abstract}
\begin{abstract}
We use the Clemens--Griffiths method to construct smooth projective threefolds, over any field $k$ admitting a separable quadratic extension, that are $k$-unirational and $\overline{k}$-rational but not $k$-rational.  When $k=\R$, we can moreover ensure that their real locus is diffeomorphic to the real locus of a smooth projective $\R$\nobreakdash-rational variety and that all their unramified cohomology groups are trivial.
\end{abstract}

\maketitle

\section{Introduction}\label{intro}

The L\"uroth problem aims at understanding when a variety $X$ over a field $k$ is $k$\nobreakdash-\textit{rational}, that is, birational to $\bP^n_k$. It is natural to restrict to classes of varieties that are close to being $k$-rational, such as $k$\nobreakdash-\textit{unirational} varieties, which admit a dominant rational map $\bP^n_k \dashrightarrow X$.

Over the field $k=\C$ of complex numbers, unirational surfaces are rational, and examples of non-rational unirational threefolds were discovered almost simultaneously by Artin--Mumford \cite{AM}, Clemens--Griffiths \cite{CG} and Iskovskikh--Manin~\cite{IM}. We refer to \cite{surveybeauville} for a beautiful survey of their methods and their rich legacy.
 
\vspace{1em}

Over a non-algebraically closed field~$k$ with algebraic closure~$\overline k$, it is interesting to investigate the $k$-rationality of varieties that are  $\overline{k}$-rational. Significant works in this direction include Chevalley's example of a torus over $\Q_p$ that is not $\Q_p$\nobreakdash-rational \cite[\S V]{Chevalley} and Swan's counter-example to Noether's problem over $\Q$ \cite[Theorem~1]{Swan}.

The strategies used by Iskovskikh and Manin (the Noether--Fano method of analyzing birational automorphism groups) and by Artin and Mumford (based on the study of Brauer groups)
have both been employed to construct interesting examples of $\overline{k}$-rational varieties that are not $k$-rational. Early applications to surfaces are respectively due to Segre (smooth cubic surfaces of Picard rank $1$ are never $k$\nobreakdash-rational \cite[Theorems 3 and 5]{Segre}, 
see also \cite[Theorem 2.1]{KSC}), and to Manin (see for instance \cite[Theorem 2.5]{Maninperfect}).

The main goal of this paper is to show that it is also possible to use the strategy of Clemens and Griffiths (relying on the theory of intermediate Jacobians) to construct varieties over~$k$ that are $\overline{k}$-rational but not $k$\nobreakdash-rational. Here are concrete new examples that we obtain in this way.

\begin{thm}[Corollary \ref{coroexemple} and Example \ref{extexte}]
\label{thexemple}
Let $k$ be a field of characteristic different from $2$. If $\alpha\in k^*\setminus (k^*)^2$, the $k$-variety defined by the affine equation $\{s^2-\alpha t^2=x^{4}+y^{4}+1\}$ is $k$-unirational, $k(\sqrt{\alpha})$-rational but not $k$-rational.
\end{thm}

\begin{thm}[Corollary \ref{coroexemple2} and Example \ref{extexte2}]
\label{thexemple2}
Let $k$ be a field of characteristic~$2$. Let $\alpha\in k$ and $\beta\in \overline{k}\setminus k$ be such that $\beta^2+\beta=\alpha$. The $k$\nobreakdash-variety with affine equation $\{s^2+st+\alpha t^2=x^3y+y^3+x\}$ is $k$-unirational, $k(\beta)$-rational but not $k$-rational.
\end{thm}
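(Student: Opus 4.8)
The plan is to exhibit a smooth projective model of the affine threefold as a conic bundle over $\bP^2$ and to read off the three assertions from its geometry. Write $N(s,t)=s^2+st+\alpha t^2$; since $\beta,\beta+1$ are the two roots of $T^2+T+\alpha$, one has $N(s,t)=(s+\beta t)(s+(\beta+1)t)=N_{k(\beta)/k}(s+\beta t)$, so the left-hand side is the norm form of $k(\beta)/k$. Homogenising $f(x,y)=x^3y+y^3+x$ to the Klein quartic $F=x^3y+y^3z+z^3x$, the equation $N(s,t)=F\,u^2$ defines a conic bundle $\pi\colon X\to\bP^2$ inside $\bP(\sO(2)^{\oplus2}\oplus\sO)$ which restricts to the given affine variety over $\{z=1,u=1\}$. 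First I would check smoothness: in characteristic $2$ the common zeros of the partials of $F$ are the points $[\zeta:1:\zeta^3]$ with $\zeta^7=1$, at which $F=\zeta^3\neq0$, so $\Delta=\{F=0\}$ is a smooth (hence non-hyperelliptic) genus-$3$ curve, and a direct check in the remaining charts shows the total space is smooth along the degeneration locus. The conic degenerates exactly over $\Delta$, into the two lines $\{s+\beta t=0\}$ and $\{s+(\beta+1)t=0\}$; as these are labelled by $\beta,\beta+1$ independently of the point of $\Delta$, the associated discriminant double cover is the constant, geometrically split cover $\tilde\Delta=\Delta_{k(\beta)}\to\Delta$.

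The two easy assertions come out at once. Over $k(\beta)$ the substitution $u=s+\beta t$, $v=s+(\beta+1)t$ turns the equation into $uv=f(x,y)$, which is $k(\beta)$-rational (solve for $v$), so $X$ is $k(\beta)$-rational and a fortiori $\overline k$-rational. For unirationality I would exhibit the $k$-point $P=(s,t,x,y)=(1,0,0,1)$, which lies on $X$ (both sides equal $1$) and on a smooth fibre since $f(0,1)=1\neq0$. A smooth conic bundle over a $k$-rational surface carrying a $k$-point on a smooth fibre is $k$-unirational: restricting to the pencil of lines through $\pi(P)$ spreads $P$ to a section and reduces the claim to conic bundle surfaces over $\bP^1$ with a rational point on a smooth fibre, which are unirational. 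This yields $k$-unirationality.

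The substance is the failure of $k$-rationality, for which I would run the Clemens–Griffiths method over $k$. The point is that, although $X_{\overline k}$ is rational and so the geometric intermediate Jacobian of $X$ vanishes, the conic bundle still has a canonically attached principally polarized abelian variety over $k$, namely the Prym variety of the cover $\tilde\Delta\to\Delta$; for this geometrically split cover it is the connected kernel $P=\Ker\bigl(\mathrm{Nm}\colon \Res_{k(\beta)/k}J(\Delta_{k(\beta)})\to J(\Delta)\bigr)^0$, which is nothing but the quadratic twist $J^{\chi}$ of $(J(\Delta),\Theta)$ by the class $\chi\in H^1(k,\Z/2)=k/\wp(k)$ of $k(\beta)/k$ acting through $[-1]$. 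Informally, the $k(\beta)$-splitting makes $X$ rational but is defined only over $k(\beta)$, and descending it forces $J^\chi$ to be assembled from $k$-curves. The key general input—the heart of the paper and, I expect, the main obstacle—is the criterion that if $X$ is $k$-rational then $(P,\Theta)$ is isomorphic, as a principally polarized abelian variety over $k$, to a product of Jacobians of smooth projective $k$-curves; proving it means carrying the blow-up/intermediate-Jacobian bookkeeping of Clemens and Griffiths through a factorization of a birational map $X\dashrightarrow\bP^3_k$ in a Galois-equivariant way, which is delicate in characteristic $2$, where weak factorization is unavailable and must be replaced by a more robust argument.

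Granting the criterion, the example is settled by a twisting argument. Since $\Delta$ is a smooth non-hyperelliptic plane quartic, $(J(\Delta),\Theta)$ is geometrically indecomposable and $\Aut(J(\Delta),\Theta)=\Aut(\Delta)\times\langle[-1]\rangle$, the involution $[-1]$ not being induced by a curve automorphism. Hence $J^{\chi}$ is geometrically indecomposable, so if it were a product of $k$-Jacobians it would be the Jacobian of a single genus-$3$ curve $C/k$; by the Torelli theorem $C_{\overline k}\cong\Delta_{\overline k}$, so $C$ is a $k$-form of $\Delta$ and $J(C)=J^{\xi}$ for a class $\xi\in H^1(k,\Aut(\Delta))$. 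Comparing twist classes in $H^1(k,\Aut(\Delta))\times H^1(k,\Z/2)$ forces $\xi=0$ and $\chi=0$, contradicting $\beta\notin k$. Therefore $J^{\chi}$ is not a product of Jacobians of $k$-curves, and $X$ is not $k$-rational.
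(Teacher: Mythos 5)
Your geometric setup (the conic bundle $\{s^2+st+\alpha t^2=u^2F\}$ over $\bP^2_k$ with $F$ the Klein quartic, smooth in characteristic $2$), your proof of $k(\beta)$-rationality, and your closing Torelli/automorphism argument all match the paper (the last is precisely Proposition \ref{Torelli}, via Serre's description $\Aut_{\ppav}(J^1\Delta_{\overline{k}})=\Aut(\Delta_{\overline{k}})\times\Z/2\Z$ for non-hyperelliptic curves). But two steps have genuine gaps. The first is unirationality: you invoke the assertion that a conic bundle surface over $\bP^1$ with a rational point on a smooth fibre is unirational. This is not a theorem; it is a well-known open problem in general, and the strongest known results (Kollár--Mella, for at most seven singular fibres) are proved in characteristic different from $2$, so nothing off the shelf applies here. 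The surfaces your pencil produces (Artin--Schreier analogues of Châtelet surfaces with four degenerate fibres) are plausibly tractable, but you give no argument for them. The paper's Example \ref{extexte2} sidesteps all of this with a characteristic-$2$ trick: the surface $T=\{t=0\}\subset X$ has equation $s^2=F$, which is \emph{purely inseparable} over $\bP^2$, so that $\bF_2(T)=\bF_2\bigl(x,y,(x^3y+y^3+x)^{1/2}\bigr)\subset\bF_2(x^{1/2},y^{1/2})$ and $T$ is unirational for free; then $X\times_{\bP^2_k}T\to T$ is a conic bundle with a tautological rational section over a $k$-unirational surface, whence $X$ is $k$-unirational.

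The second gap is that for non-rationality you grant "the criterion", but what you grant is not one lemma: it is the combined content of the paper's Sections 2 and 3, namely (a) the construction of $J^3X$ as a ppav over $k$ (Murre's $\Ab^2X$ descended to perfect base fields, principally polarized via Bloch's Abel--Jacobi map, Property \ref{assum}); (b) its birational invariance and the Clemens--Griffiths criterion over $k$ (Theorem \ref{thbir}, Corollary \ref{CGk}); and (c) the identification of $J^3X$ with your Prym, i.e.\ with the quadratic twist ${}_{\varphi(a)}(J^1\Delta)$ -- in the paper this is Propositions \ref{twisttwist} and \ref{conicbundles}, whose proof requires showing that the deck involution $\delta$ acts as $-\Id$ on $\CH^2(Y_{\overline{k}})_{\alg}$ for the split form $Y$. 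Relatedly, your framing contains an error: the geometric intermediate Jacobian of $X$ does \emph{not} vanish; one has $J^3X_{\overline{k}}\simeq J^1\Delta_{\overline{k}}$, a three-dimensional ppav. Geometric rationality only forces it to be a Jacobian (hence no obstruction over $\overline{k}$); the obstruction is carried entirely by the $k$-form, i.e.\ by the twist. Finally, a small point you ignore and the paper handles: the intermediate Jacobian machinery requires a perfect base field, so one must pass to the perfect closure of $k$ (harmless here, since $k(\beta)/k$ is separable and so $\beta$ stays outside the perfect closure), as is done in the proof of Corollary \ref{coroexemple2}.
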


Constructions of intermediate Jacobians over other fields than the field $\C$ of complex numbers have been provided by Deligne \cite{Deligne}, Murre \cite{Murrecubic, Murre} and Achter, Casalaina-Martin and Vial \cite{ACMV1,ACMV}, in various degrees of generality. In Section \ref{secij}, 
building on these works and using in an essential way Bloch's Abel--Jacobi map~\cite{Bloch}, we associate with any smooth projective $\overline{k}$\nobreakdash-rational threefold $X$ over a perfect field $k$ a principally polarized abelian variety $J^3X$ over $k$ (our contribution being the construction of the principal polarization). We verify in Corollary \ref{CGk} that it gives rise to an obstruction to the $k$-rationality of $X$ generalizing the one considered by Clemens and Griffiths \cite{CG}: if $X$ is $k$-rational, then $J^3X$ is isomorphic to the Jacobian of a (possibly disconnected) smooth projective curve over~$k$.

Over algebraically closed fields, several techniques have been used to detect that an intermediate Jacobian is not a Jacobian: the geometry of its theta divisor \cite{CG}, its automorphism group \cite{Beauvilleaut}, or the zeta function of one of its specializations over a finite field \cite{MR}. To give examples of $\overline{k}$-rational varieties that are not $k$\nobreakdash-rational, we need a criterion of a more algebraic nature, which can distinguish between Jacobians of curves and their twists. Such a criterion is established in Proposition~\ref{Torelli} as a consequence of the Torelli theorem.
It is especially easy to apply when $X$ itself is a twist of a $k$-rational variety (see Proposition \ref{twisttwist}), as in Theorems~\ref{thexemple} and \ref{thexemple2}.

\vspace{1em}

Our results are of particular interest over the field $k=\R$ of real numbers, with Galois group $G:=\Gal(\C/\R)\simeq \Z/2\Z$. The real locus of an $\R$-rational smooth projective variety is non-empty and connected.
That this yields obstructions to $\R$\nobreakdash-rationality goes back to Comessatti (\cite[\S 5]{Comessatti}, see also \cite[Th\'eor\`eme 1.1]{CTcubiques}).

In dimension $\leq 2$, there are no further obstructions to the $\R$-rationality of a $\C$-rational variety. The case of curves is easy since a real conic with a real point is isomorphic to $\bP^1_{\R}$, and it is a theorem of Comessatti that a smooth projective $\C$-rational surface over $\R$ whose real locus is non-empty and connected is $\R$-rational (see \cite[pp.\ 54-55]{Comessatti}
or the modern proof of Silhol \cite[VI Corollary 6.5]{Silhol}).

In dimension $\geq 3$, all known examples of smooth projective $\C$-rational varieties over $\R$ that
are not $\R$\nobreakdash-rational rely on a real analogue of the Artin--Mumford invariant (the Brauer group) or on its higher degree generalizations given by unramified cohomology \cite{CToj,Peyre}. (The latter take into account the obstructions induced by the number of connected components of the real locus by \cite[Main Theorem]{CTP}.)
We give the first example of an irrational smooth projective $\C$-rational variety over~$\R$ that does not rely on the above-mentioned invariants, dashing any hope for a simple $\R$-rationality criterion for $\C$-rational varieties in dimension $\geq 3$.

\begin{thm}[Theorem \ref{premierexemple}]
\label{main}
There exists a smooth projective threefold $X$ over~$\R$ that is not $\R$\nobreakdash-rational, but that is $\C$-rational, $\R$-unirational, such that $X(\R)$ is diffeomorphic to $(\bP^1\times\bP^2)(\R)$, and such that for any $G$-module $M$ and any $i\geq 0$,  $H^i(G,M)\stackrel{\sim}\longrightarrow H^i_{\nr}(X,M)$.
\end{thm}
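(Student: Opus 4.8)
The plan is to take $X$ to be a suitable smooth projective model of the affine threefold $\{s^2+t^2=x^4+y^4+1\}$ over $\R$, that is, the variety of Theorem~\ref{thexemple} with $\alpha=-1$, so that $k(\sqrt{\alpha})=\C$. By Corollary~\ref{coroexemple} and Example~\ref{extexte}, this $X$ is already $\R$-unirational, $\C$-rational and not $\R$-rational, so the only genuinely new assertions are the diffeomorphism type of $X(\R)$ and the triviality of its unramified cohomology. The natural model to use is the homogeneous conic bundle $\pi\colon X\to\bP^2_{\R}$ with base coordinates $(x:y:z)$ and fiber conic $\{s^2+t^2=f\,w^2\}$, where $f=x^4+y^4+z^4$; its discriminant is the smooth Fermat quartic $D=\{f=0\}$, a curve of genus~$3$ with no real point.

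It is worth recalling, as the conceptual core of the example, why $X$ fails to be $\R$-rational. Over $\C$ the substitution $u=s+it$, $v=s-it$ turns the equation into $\{uv=f\}$, exhibiting $X$ as the twist, by the extension $\C/\R$, of the $\R$-rational variety $Y=\{uv=f\}$ along the involution exchanging $u$ and $v$. By Proposition~\ref{twisttwist}, the intermediate Jacobian $J^3X$ is the corresponding twist of $J^3Y$, a principally polarized abelian surface which over $\C$ is the Prym variety of the étale double cover of $D$ determined by the rulings of the degenerate fibers. This surface is a Jacobian over $\C$, but its twisted real form is not the Jacobian of any smooth projective curve over $\R$; this is precisely what the Torelli-type criterion of Proposition~\ref{Torelli} detects, whence $X$ is not $\R$-rational by Corollary~\ref{CGk}.

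For the real locus, I would argue as follows. Since $f=x^4+y^4+z^4>0$ at every real point of $\bP^2$, a real point of a fiber with $w=0$ would force $s^2+t^2=0$, hence $s=t=0$, which is impossible; thus every real point of $X$ lies in the chart $w\neq 0$, where the fiber is the affine circle $\{s^2+t^2=f\}$. Dividing the coordinates $(s,t)$ by the smooth positive function $\sqrt{f}$ on $\bP^2(\R)$ then produces a global diffeomorphism $X(\R)\cong S^1\times\bP^2(\R)=(\bP^1\times\bP^2)(\R)$. The point that requires care here is to verify that the chosen projective model is smooth, in particular along the boundary, which is what dictates its precise shape.

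The main obstacle is the last assertion, that $H^i(G,M)\stackrel{\sim}{\longrightarrow}H^i_{\nr}(X,M)$ for every $G$-module $M$ and every $i\geq 0$; this is what shows that no cohomological invariant detects the irrationality, so that the intermediate Jacobian is truly essential. I would establish it by combining two facts. On the one hand, since $X_{\C}$ is rational, its unramified cohomology is that of a point, so all geometric classes are trivial and only the arithmetic of $G=\Gal(\C/\R)$ can contribute. On the other hand, the remaining potential classes are those supported on the real locus, which by \cite{CTP} are governed by the connected components of $X(\R)$; as $X(\R)\cong S^1\times\bP^2(\R)$ is connected, these vanish. Combining these two inputs yields the asserted isomorphism, confirming that neither the real Brauer group nor any higher unramified cohomology group obstructs the $\R$-rationality of $X$.
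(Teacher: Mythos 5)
Your choice of example and your treatment of three of the four assertions coincide with the paper's: the variety is $X=\{s^2+t^2=u^2F\}$ with $F=x^4+y^4+z^4$, irrationality comes from Corollary \ref{coroexemple}, unirationality from Example \ref{extexte} (the paper uses Proposition \ref{propunirat}, which amounts to the same), and the diffeomorphism $X(\R)\simeq \bS^1\times\bP^2(\R)$ is Proposition \ref{propreallocus}, proved by the same square-root-of-$F$ trick. (One factual slip in your expository aside: $J^3X$ is not an abelian surface but a three-dimensional ppav, namely the quadratic twist of $J^1C$ for the genus-$3$ quartic $C$; the relevant ``double cover'' is $C_\C\to C$, which is not geometrically connected, so the intermediate Jacobian has dimension $g=3$ rather than $g-1$ --- see the Remark following Proposition \ref{conicbundles}. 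This does not affect your logic, which rests on the cited results.)

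The genuine gap is in the unramified cohomology assertion, which you correctly identify as the main obstacle but then dispose of too quickly. Your two inputs --- $\C$-rationality of $X_\C$ kills ``geometric'' classes, and connectedness of $X(\R)$ kills the classes ``supported on the real locus'' via \cite{CTP} --- are not sufficient to conclude that $H^i(G,M)\to H^i_{\nr}(X,M)$ is an isomorphism for all $i$ and $M$. The paper's own Theorem \ref{secondexemple} is a counterexample to exactly this implication: it exhibits a smooth projective $\C$-rational, $\R$-unirational threefold over $\R$ whose real locus is connected (diffeomorphic to that of an $\R$-rational variety), yet whose Brauer group contains $(\Z/2\Z)^2$, so that by (\ref{Brnr}) the map $H^2(G,\Q/\Z(1))\to H^2_{\nr}(X,\Q/\Z(1))$ is not an isomorphism. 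Both of your inputs hold for that variety, while the conclusion fails. The point is that \cite{CTP} only controls unramified cohomology with $\Z/2\Z$ coefficients in degrees above the dimension; in degrees $2$ and $3$ the equivariant cohomology mixes arithmetic and geometry nontrivially (for instance through $H^1(G,\Pic(X_\C))$, i.e.\ the real Artin--Mumford invariant), and this is not governed by connected components.

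What is actually required is the equivalence of Theorem \ref{thunr3}: for a $\C$-rational threefold, triviality of all unramified cohomology is equivalent to (i) $X(\R)$ connected, \emph{plus} (ii) $\Pic(X_\C)$ being a permutation $G$-module, \emph{plus} (iii) surjectivity of the Borel--Haefliger map $\cl_{\R}:\CH_1(X)\to H_1(X(\R),\Z/2\Z)$. The implication $(2)\Rightarrow(1)$ is the substantial content of Section \ref{secunram} (a Bloch--Ogus argument resolving $M$ by permutation modules, with vanishing statements and diagram chases in degrees $2$ and $3$), and conditions (ii) and (iii) must then be verified for this particular $X$, which is Proposition \ref{trivialunrcoho}: condition (iii) via \cite[Theorem 6.1]{BW2} and \cite[Theorem 3.22]{BW}, and condition (ii) via the exact sequence $0\to\Z\oplus\Z(1)\to\Pic(X_\C)\to\Z\to 0$ together with the fact that the conic bundle $X\to\bP^2_\R$ has no rational section (it ramifies along $C$), which forces $H^1(G,\Pic(X_\C))=0$. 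None of this can be bypassed by the soft argument you propose.
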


The variety $X$ used in our proof of Theorem \ref{main} is the one described in Theorem~\ref{thexemple} for $\alpha=-1$. It is its intermediate Jacobian that shows that it is not $\R$\nobreakdash-rational. The last statement of Theorem \ref{main} asserts that the unramified cohomology groups of $X$ cannot be used to show that $X$ is not $\R$\nobreakdash-rational.  To contrast with Theorem \ref{main}, we provide in Theorem \ref{secondexemple} an example of a smooth projective $\C$\nobreakdash-rational and $\R$\nobreakdash-unirational threefold over $\R$ whose real locus is diffeomorphic to the real locus of a smooth projective $\R$-rational variety, whose intermediate Jacobian is trivial, but that
is not $\R$-rational thanks to the Artin--Mumford invariant.

A new specialization technique introduced by Voisin \cite{Voisinspe} has recently led to tremendous progress in rationality problems (see \cite{PeyreBourbaki} for a survey of this method and its applications). However, these specialization arguments cannot provide examples of $\C$-rational varieties over $\R$ that are not $\R$-rational, for the reason that all non-trivial valuations on $\R$ have algebraically closed residue fields. In particular, such arguments cannot be used to prove Theorem \ref{main}.

\vspace{1em}

Let us now explain and develop the last statement of Theorem \ref{main}, which concerns unramified cohomology.
 One can associate with any smooth projective variety $X$ over~$\R$ an abelian group $H^i_{\nr}(X,M)$ for every integer $i\geq 0$ and every $G$-module~$M$ (see \S\ref{unramcoho}). If $X$ is $\R$-rational, these unramified cohomology groups are trivial in the sense that the natural pull-back maps $H^i(G,M)\to H^i_{\nr}(X,M)$ are isomorphisms (Proposition \ref{trivialunramcoho}). 
  Relying on Bloch--Ogus theory, we study these invariants in Section \ref{secunram}.
Our main contribution is a complete understanding of when they can be used to show that a $\C$-rational threefold is not $\R$-rational, yielding a proof of the last assertion of Theorem \ref{main}.

\begin{thm}[Theorem \ref{thunr3}]
\label{thunr3intro}
Let $X$ be a smooth projective threefold over $\R$ that is $\C$-rational. The following are equivalent:
\begin{enumerate}
\item For any $i\geq 0$ and any $G$-module $M$, $H^i(G,M)\stackrel{\sim}\longrightarrow H^i_{\nr}(X,M)$.
\item The variety $X$ satisfies:
\begin{enumerate}[(i)]
\item $X(\R)$ has exactly one connected component,
\item $\Pic(X_{\C})$ is a permutation $G$-module,
\item The cycle class map $\cl_{\R}:\CH_1(X)\to H_1(X(\R),\Z/2\Z)$ is surjective.
\end{enumerate}
\end{enumerate}
\end{thm}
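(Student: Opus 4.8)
The plan is to compute $H^i_{\nr}(X,M)$ by combining the Bloch--Ogus (coniveau) spectral sequence with the geometric rationality of $X_{\C}$ and, in high degrees, Scheiderer's comparison between étale cohomology and the topology of the real locus $X(\R)$. First I would reduce the statement to a short list of coefficient modules. Both sides commute with filtered colimits, so one may assume $M$ finitely generated; the Hochschild--Serre spectral sequence together with Shapiro's lemma disposes of the induced modules $\Z[G]\otimes M_0$, since for these both $H^i(G,-)$ and, by base change and the rationality of $X_\C$, the groups $H^i_{\nr}(X_\C,M_0)$ vanish for $i>0$ and agree in degree zero. Modules of odd order likewise contribute nothing in positive degrees and match in degree zero. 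This leaves the cyclic modules with trivial or sign action, namely $\Z$, $\Z^{-}$, $\Z/2^n$ and $\Z/2^n(1)$, which carry all the content.

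For each such $M$ I would run the Bloch--Ogus spectral sequence $E_2^{p,q}=H^p_{\mathrm{Zar}}(X,\sH^q(M))\Rightarrow H^{p+q}_{\et}(X,M)$, so that $H^i_{\nr}(X,M)=H^0_{\mathrm{Zar}}(X,\sH^i(M))$, and feed into it the constraints from $X_\C$: because $X_\C$ is rational, its higher unramified cohomology vanishes (the geometric form of Proposition~\ref{trivialunramcoho}) and $H^2_{\et}(X_\C,\Z/2)=\Pic(X_\C)\otimes\Z/2$ is entirely algebraic. Descending to $\R$ through the Galois action then expresses the interesting part of $H^i_{\nr}(X,M)$ in terms of the Galois cohomology groups $H^p(G,\Pic(X_\C)\otimes -)$ and, in large degrees, the cohomology of $X(\R)$. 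I would then isolate three independent sources of classes beyond the image of $H^i(G,M)$, matching the three conditions.

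\emph{High degrees:} for $i>\dim X=3$, Scheiderer's theorem identifies $\sH^i(\Z/2)$ with a sheaf supported on the real locus and yields $H^i_{\nr}(X,\Z/2)\cong H^0(X(\R),\Z/2)=(\Z/2)^{c}$, with $c$ the number of connected components of $X(\R)$; this equals $H^i(G,\Z/2)=\Z/2$ precisely when $c=1$, accounting for~(i). \emph{Picard group:} condition~(ii) is equivalent to every group $H^p(G,\Pic(X_\C))$ being generated by algebraic, hence unramified-trivial, classes, so that $\Pic(X_\C)$ contributes nothing new; the distinction between the trivial and sign summands, invisible with $\Z/2$ coefficients, is detected through the integral and twisted modules $\Z^{-}$ and $\Z/2^n(1)$ via the inclusion $H^1(G,\Pic(X_\C))\hookrightarrow \Br(X)$. \emph{Transition degree:} in degree $i=3$ the last potential class comes from $H^2(X(\R),\Z/2)\cong H_1(X(\R),\Z/2)$, by Poincaré duality on the compact $3$\nobreakdash-manifold $X(\R)$; the real cycle class map realizes $\CH_1(X)$ inside this group, and its surjectivity is exactly what annihilates the class, accounting for~(iii). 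Carrying out this computation under the hypotheses (i)--(iii) yields $(2)\Rightarrow(1)$, while specializing the isomorphism to the module and degree attached to each condition yields $(1)\Rightarrow(2)$.

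The main obstacle I expect is the transition-degree analysis tying condition~(iii) to $H^3_{\nr}(X,\Z/2)$: one must make precise, within the coniveau filtration and Scheiderer's comparison, how the real cycle class map $\cl_\R\colon\CH_1(X)\to H_1(X(\R),\Z/2)$ intervenes, and show that its surjectivity is equivalent to the vanishing of the corresponding subquotient of unramified cohomology. A secondary difficulty is the integral bookkeeping needed for~(ii): since $\Z$ and $\Z^{-}$ agree modulo $2$, separating the permutation summands of $\Pic(X_\C)$ from the sign summands forces one to work with all modules $M$ simultaneously and to track Tate twists carefully through the Brauer group.
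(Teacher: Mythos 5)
Your proposal contains a genuine gap, and it occurs at the very first step: the reduction of an arbitrary finitely generated $G$-module to the list $\Z[G]\otimes M_0$, odd-order modules, $\Z$, $\Z^{-}$, $\Z/2^n$, $\Z/2^n(1)$ is false. Finitely generated $\Z[G]$-modules do not decompose as direct sums of these types. The classification you are implicitly invoking holds only for finitely generated \emph{torsion-free} $G$-modules (this is exactly Lemma \ref{permulemme}, via Silhol); once torsion is allowed there are further indecomposables. For instance, $M=\Z/4\oplus\Z/2$ with $\sigma(a,b)=(a+2b,b)$ is a finite $2$-primary $G$-module whose invariant subgroup is cyclic of order $4$, whereas the invariants of $\Z/4\oplus\Z/2$ and of $\Z/4(1)\oplus\Z/2$ are $\Z/4\oplus\Z/2$ and $\Z/2\oplus\Z/2$ respectively; hence $M$ is indecomposable and lies outside your list. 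This is precisely why the paper does not classify modules at all: instead it resolves an arbitrary finitely generated $M$ by \emph{permutation} modules, $0\to A\to B\to M\to 0$ (with $B\to M$ and $B^G\to M^G$ surjective, so that $A$ is again a permutation module by Lemma \ref{permulemme}), and then chases the resulting long exact sequence of Zariski sheaves $\sH^i_X(-)$. Your plan cannot be repaired by simply enlarging the list, because the family of indecomposables is not under control; the resolution technique is the structural idea you are missing.

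The second, related, problem is that the heart of the theorem --- the implication $(2)\Rightarrow(1)$ in degrees $i=2,3$ --- is exactly the step you defer as ``the main obstacle I expect,'' and your sketch for it does not match how conditions (ii) and (iii) actually intervene. Condition (iii) does not act on $H^0(X,\sH^3_X(\Z/2))$ through Poincar\'e duality on $X(\R)$; in the paper it is used, via the real integral Hodge conjecture (\cite[Theorem 3.22]{BW} combined with \cite[(5.9)]{BW}), to kill the Zariski cohomology group $H^1(X,\sH^3_X(\Z))$ --- a degree-one cohomology group of a degree-three sheaf, invisible in your outline --- which, together with the vanishing of $H^0(X,\sH^3_X(B))$ for permutation $B$ (a consequence of rationality of $X_\C$ and \cite{CTV}), makes $H^3_{\nr}(X,M)\to H^4_{\nr}(X,A)$ injective and allows the diagram chase proving surjectivity of $H^3(G,M)\to H^3_{\nr}(X,M)$. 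Similarly, in degree $2$ condition (ii) enters not as a statement that ``$H^p(G,\Pic(X_\C))$ is generated by algebraic classes'' but through the surjectivity of $H^2(X(\C),D)^G\to H^2(X(\C),M)^G$ for a permutation resolution $0\to C\to D\to M\to 0$ by sums of $\Z(1)$ and $\Z[G]$, combined with the vanishing of the Artin--Mumford invariant of the rational variety $X_\C$, the Hochschild--Serre spectral sequence, and a coniveau argument producing the injection $H^1(X,\sH^2_X(C))\hookrightarrow H^1(X,\sH^2_X(D))$. By contrast, your treatment of degree $0$, degree $1$, the degrees $i\geq 4$ (via Scheiderer's comparison, matching condition (i)), and the direction $(1)\Rightarrow(2)$ for (ii) via the Brauer group is essentially the paper's; but without the permutation-resolution mechanism the proposal does not prove the theorem.
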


We have already discussed condition (i) in Theorem \ref{thunr3intro}. Manin \cite[Theorem~2.2]{Maninperfect} and Voskresenskii \cite[Theorem 1]{Voskr} noticed that there are restrictions on the Galois module structure of the geometric Picard group of smooth projective $k$\nobreakdash-rational varieties. When $k=\R$, this specializes to condition~(ii) in Theorem \ref{thunr3intro}, where a permutation $G$-module is a $G$\nobreakdash-module that is a direct sum of $G$-modules isomorphic to $\Z[G]$ or to the trivial $G$-module $\Z$. In view of the Hochschild--Serre spectral sequence (\ref{HochschildSerre}), condition (ii) is equivalent, for smooth projective $\C$-rational varieties satisfying (i), to the assertion that the pull-back $\Br(\R)\to \Br(X)$ is an isomorphism, that is, to the triviality of the real analogue of the Artin--Mumford invariant.

Soul\'e and Voisin observed in \cite[Lemma 1]{soulevoisin} that the validity of the integral Hodge conjecture for $1$-cycles is a necessary condition for the $\C$-rationality of a smooth projective variety over $\C$. Condition (iii) is an analogue over $\R$ of this condition, in which the Borel\nobreakdash--Haefliger cycle class map $\cl_{\R}:\CH_1(X)\to H_1(X(\R),\Z/2\Z)$, defined in~\cite{BH}, associates with an integral curve $j:Z\hookrightarrow X$ with normalization $\pi:\widetilde{Z}\to Z$ the homology class $(j\circ \pi)_*[\widetilde{Z}(\R)]\in H_1(X(\R),\Z/2\Z)$.

That condition~(iii) holds for $\R$-rational varieties was already noticed, in the stronger form of an approximation theorem, by Bochnak and Kucharz \cite[Theorem~1.1]{BK}. 
It is possible that condition (iii)  is satisfied for all smooth projective rationally connected threefolds (see the more general \cite[Question~3.4]{BW}). Since this applies to $\C$-rational threefolds, this would allow one to remove (iii) from the statement of Theorem \ref{thunr3intro}.
Condition~(iii) is known to hold if $X$ is birational to a conic bundle over a $\C$-rational surface \cite[Corollary 6.5]{BW2}, or to a del Pezzo fibration of degree $\delta\in\{9,8,7,6,5,3\}$  over $\bP^1_{\R}$ \cite[Theorem 8.1 and Proposition 8.4]{BW2}.

\subsection*{Notation and conventions}

We fix a field $k$. Everywhere except in part of~\S\ref{secconic}, we assume that $k$ is perfect. We fix an algebraic closure $\overline{k}$
of~$k$ and let $\Gamma_k=\Aut(\overline{k}/k)$ be the absolute Galois group of $k$.
A \emph{variety} over $k$ is a separated scheme of finite type over $k$.
If $X$ is a variety over $k$, we let $\CH^c(X_{\overline{k}})_{\alg}\subset\CH^c(X_{\overline{k}})$ be the subgroup of algebraically trivial codimension $c$ cycle classes.

If $M$ is an abelian group and $n$ is an integer, we let $M[n]\subset M$ be the $n$-torsion subgroup.
If $\ell$ is a prime number we will consider the subgroup $M\{l\}:=\varinjlim_{\nu}M[\ell^\nu]$ of $\ell$-primary torsion of $M$ and the $\ell$-adic Tate module $T_{\ell}M:=\varprojlim_{\nu} M[\ell^\nu]$ of $M$.
If $M$ is a free $\Z_\ell$\nobreakdash-module (resp.\ $\Z$\nobreakdash-module) of finite rank,
we let $M^\vee = \Hom(M,\Z_\ell)$ (resp.\ $M^\vee=\Hom(M,\Z)$).

When $k=\R$, we set $G:=\Gamma_{\R}\simeq \Z/2\Z$, generated by the complex conjugation $\sigma\in G$. For $j\in\Z$, we consider the $G$-module $\Z(j):=(\sqrt{-1})^j\Z\subset \C$, and set $M(j):=M\otimes_{\Z}\Z(j)$ for any $G$-module $M$.

\subsection*{Acknowledgements}
We thank Olivier Piltant for explaining to us how to use embedded resolution of singularities for surfaces in the proof of Proposition \ref{resindet} and the referee for their careful work.

\section{Intermediate Jacobians}
\label{secij}

  In Section \ref{secij}, we study intermediate Jacobians of smooth projective threefolds.

\subsection{Principally polarized abelian varieties}
\label{ppavk}

A \textit{principally polarized abelian variety} (ppav) over $k$ is a pair $(A,\theta)$ consisting of an abelian variety $A$ over~$k$ and of a class $\theta\in\NS(A_{\overline{k}})^{\Gamma_k}$ induced by an ample line bundle on~$A_{\overline{k}}$ whose associated isogeny $A_{\overline{k}}\to \widehat{A}_{\overline{k}}$ (see \cite[Corollary 5 p.~131]{Mumford}) is an isomorphism. A morphism $p:(A',\theta')\to (A,\theta)$ of ppavs over $k$ is a (necessarily injective) morphism $p:A'\to A$ of abelian varieties such that $p^*\theta=\theta'$. One says that $(A',\theta')$ is a \textit{sub-ppav} of~$(A,\theta)$.

Let $C$ be a smooth projective curve over $k$, and let $(C_i)_{i\in I}$ be the connected components of $C_{\overline{k}}$. The \textit{Jacobian} $J^1C$ of $C$ is the identity connected component of the Picard scheme of $C$. It parametrizes line bundles on $C$ that have degree $0$ on all of the~$C_i$. 
There is a natural isomorphism $J^1C_{\overline{k}}\xrightarrow{\sim}\prod_{i\in I}J^1C_i$. Denoting by $\Theta_i\subset J^1C_i$ a theta divisor and by $p_i:J^1C_{\overline{k}}\to J^1C_i$ the projection, the line bundle $\bigotimes_{i\in I} p_i^*\sO_{J^1C_i}(\Theta_i)$
endows $J^1C$ with the structure of a ppav over~$k$.

A ppav over $k$ is \textit{indecomposable} if it is non-zero and is not isomorphic to the product
of two non-zero ppavs.
A ppav $(A,\theta)$ over $k$ is isomorphic to the product of its indecomposable sub-ppavs.
This is proved in \cite[Lemma 3.20, Corollary~3.23]{CG} if $k=\C$, and the proof still works if $k=\overline{k}$ as explained in \cite[Lemma 10]{Murrecubic}. One deduces the result in general by Galois descent: the indecomposable sub-ppavs of $(A,\theta)$, viewed over $\overline{k}$, are exactly the products of a $\Gamma_k$-orbit of indecomposable sub-ppavs of $(A_{\overline{k}},\theta)$.
We deduce that any morphism $p:(A',\theta')\to (A,\theta)$ of ppavs over $k$ induces a decomposition $(A,\theta)\simeq (A',\theta')\times (A'',\theta'')$ of $(A,\theta)$ as a product of ppavs over $k$.

The Jacobian $J^1C$ of a smooth projective connected (but not necessarily geometrically connected) curve $C$ over $k$ is indecomposable. If $k=\overline{k}$, this follows from the irreducibility of the theta divisor. In general, the connected components $C_i$ of $C_{\overline{k}}$ are permuted transitively by $\Gamma_k$ because $C$ is connected, so that the factors $J^1C_i$ of the decomposition $J^1C_{\overline{k}}=\prod_i J^1C_i$ as a product of indecomposable ppavs over $\overline{k}$ are permuted transitively by $\Gamma_k$, showing that $J^1C$ is indecomposable.

\subsection{Codimension \texorpdfstring{$2$}{2} algebraic cycles}
\label{cod2}
In this paragraph, we study substitutes over $k$ for the complex Abel--Jacobi map, with an emphasis on codimension $2$ cycles.

\subsubsection{Murre's intermediate Jacobian}
\label{AJMurre}

With a smooth projective variety $X$ over a perfect field~$k$ is associated an abelian variety $\Ab^2X$
 over~$k$, called the \textit{algebraic representative for algebraically trivial codimension $2$ cycles on~$X$}. (The construction of Murre over $\overline{k}$ \cite[Theorem~A~p.~226]{Murre}, as corrected by Kahn \cite{KahnMurre}, has been shown by Achter, Casalaina-Martin and Vial to descend to any perfect field \cite[Theorem~4.4]{ACMV1}.) It is characterized by the existence of a surjective $\Gamma_k$-equivariant map
\begin{equation}
\label{AJk}
\phi^2_X:\CH^2(X_{\overline{k}})_{\alg}\to \Ab^2X(\overline{k})
\end{equation}
that is initial among regular homomorphisms with values in an abelian variety over~$\overline{k}$ (see \cite[Definition 3.1, Theorem 4.4]{ACMV1}).

Let~$X$ and~$Y$ be smooth projective varieties over $k$ and $g:Y_{\overline{k}} \to X_{\overline{k}}$
be a morphism over~$\overline{k}$.
By the universal property of $\phi^2_X$, 
the composition $\phi^2_Y\circ g^*:\CH^2(X_{\overline{k}})_{\alg}\to \Ab^2Y(\overline{k})$ factors
as $g^+(\overline{k})\circ\phi^2_X$
for a unique morphism $g^+:\Ab^2X_{\overline{k}}\to \Ab^2Y_{\overline{k}}$.
Since $\phi^2_X$ is surjective and~$\phi^2_X$ and~$\phi^2_Y$ are $\Gamma_k$\nobreakdash-equivariant, the map
$g \mapsto g^+(\overline{k})$ is $\Gamma_k$\nobreakdash-equivariant,
and hence so is the map $g \mapsto g^+$.
In particular, if $g=f_{\overline{k}}$ for a morphism $f:Y\to X$ of varieties over~$k$,
then $g^+$ descends to a morphism $f^+:\Ab^2X\to \Ab^2Y$
of abelian varieties over~$k$.

The same argument shows that if $X$ and $Y$ are equidimensional of the same dimension and $f:Y\to X$ is a morphism
of varieties over~$k$, there exists a unique morphism $f_+:\Ab^2Y\to \Ab^2X$ of abelian varieties over $k$ such that $\phi^2_X\circ f_*=f_+(\overline{k})\circ\phi^2_Y$.

\subsubsection{Bloch's Abel--Jacobi map}
If $X$ is a smooth projective variety over $k$, Bloch has defined for all prime numbers~$\ell$ that are  invertible in $k$ and all $c\geq 0$ a morphism
\begin{equation}
\label{AJB}
\lambda^c:\CH^c(X_{\overline{k}})\{\ell\}\to H^{2c-1}_{\et}(X_{\overline{k}},\Q_{\ell}/\Z_{\ell}(c))
\end{equation}
called \textit{Bloch's} $\ell$-\textit{adic Abel--Jacobi map} \cite[\S2]{Bloch}, which is $\Gamma_k$-equivariant  by construction and compatible with the action of correspondences \cite[Proposition 3.5]{Bloch}. 
The map $\lambda^c$ is bijective if $c=1$ by Kummer theory \cite[Proposition 3.6]{Bloch}
 and injective if $c=2$ as a consequence of the Merkurjev--Suslin theorem \cite[Corollaire 4]{Torsion}.
The composition of $\lambda^c$ with the last arrow in the exact sequence
$$0\to H^{2c-1}_{\et}(X_{\overline{k}},\Z_{\ell}(c))\otimes\Q_\ell/\Z_\ell\to H^{2c-1}_{\et}(X_{\overline{k}},\Q_{\ell}/\Z_{\ell}(c))\to H^{2c}_{\et}(X_{\overline{k}},\Z_{\ell}(c))$$
is, up to a sign, the $\ell$-adic cycle class map \cite[Corollaire 4]{Torsion}.
Since the cycle class map vanishes on algebraically trivial cycles, $\lambda^c$ restricts to a $\Gamma_k$\nobreakdash-equivariant map 
\begin{equation}
\label{restrAJB}
\lambda^c:\CH^c(X_{\overline{k}})_{\alg}\{\ell\}\to H^{2c-1}_{\et}(X_{\overline{k}},\Z_{\ell}(c))\otimes\Q_\ell/\Z_\ell.
\end{equation}
This map is obviously still injective if $c\leq 2$, and it is still surjective if $c=1$: indeed, a codimension $1$ algebraic cycle of $\ell$-primary torsion that has trivial $\ell$-adic cycle class is algebraically trivial, in view
of the inclusion $\NS(X_{\overline{k}})\otimes \Z_{\ell} \subset H^2_{\et}(X_{\overline{k}},\Z_{\ell}(1))$ induced by the Kummer exact sequence
(see \cite[(2) p.~485]{SkZ}).

\subsubsection{Varieties with few zero-cycles}
We will use the following classical definition.

\begin{Def}
\label{defsuppi}
If $X$ is a smooth projective variety over $k$, we say that $\CH_{0}(X)_{\Q}$ is \textit{supported in dimension} $i$ if there exists a closed subvariety $V\subset X$ of dimension~$\leq i$ such that for all algebraically closed field extensions $k\subset\Omega$, the push-forward map $\CH_0(V_\Omega)_\Q\to \CH_0(X_\Omega)_\Q$ is surjective. 
\end{Def}

\begin{lem}
\label{lembir}
Let $f:X\dashrightarrow X'$ be a birational map of smooth projective varieties over $k$. If $\CH_{0}(X)_{\Q}$ is supported in dimension $i$, then so is $\CH_{0}(X')_{\Q}$.
\end{lem}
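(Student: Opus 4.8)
The plan is to reformulate the hypothesis as a decomposition of the diagonal and then transport this decomposition across $f$ by means of the graph correspondence, thereby avoiding any attempt to move $0$-cycles that are confined to $V$. Write $n=\dim X=\dim X'$ (birational smooth projective varieties have the same dimension). First I would record the standard equivalence, due in essence to the spreading-out argument of Bloch and Srinivas: for a smooth projective variety $X$ over $k$, the group $\CH_0(X)_\Q$ is supported in dimension $i$ if and only if there exist an integer $N>0$ and a decomposition
\[
N[\Delta_X]=\Gamma_1+\Gamma_2\quad\text{in }\CH_n(X\times_k X)_\Q,
\]
where $\Gamma_1$ is supported on $D\times X$ for some closed $D\subsetneq X$ and $\Gamma_2$ is supported on $X\times V$ with $\dim V\le i$. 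The reverse implication is the useful one: letting $[\Delta_X]$ act as the identity on a class $\alpha\in\CH_0(X_\Omega)_\Q$ and using Chow's moving lemma to move $\alpha$ off the proper closed subset $D_\Omega$, the term $\Gamma_1$ acts by zero and one reads off that $N\alpha$ is supported on $V_\Omega$, uniformly in $\Omega$. The direct implication follows by restricting $[\Delta_X]$ to the generic fibre of the first projection, applying the hypothesis over $\overline{k(X)}$, descending to $k(X)$ up to a multiple, and spreading out through the localization sequence.

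Next I would use that $X$ is smooth and $X'$ is projective to fix the open domains of definition $U\subset X$ and $U'\subset X'$ of $f$ and $f^{-1}$, whose complements have codimension $\ge 2$, together with dense opens $W\subset U$ and $W'\subset U'$ that $f$ exchanges isomorphically. Let $\Gamma_f\in\CH_n(X\times X')$ be the class of the closure of the graph and $\Gamma_f^{\mathrm t}\in\CH_n(X'\times X)$ its transpose, which is the graph of $f^{-1}$. The key geometric input is that $f^{-1}$ is defined on all of $W'$: for a point with first coordinate in $W'$ the composite $f\circ f^{-1}$ returns that point, so $\Gamma_f\circ\Gamma_f^{\mathrm t}$ agrees with the diagonal over $W'$. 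Concretely one gets
\[
\Gamma_f\circ\Gamma_f^{\mathrm t}=[\Delta_{X'}]+C\quad\text{in }\CH_n(X'\times X'),
\]
with $C$ supported on $(X'\setminus W')\times X'$, that is, with first-factor support a proper closed subset of $X'$.

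I would then conjugate the decomposition, computing $N\,\Gamma_f\circ\Gamma_f^{\mathrm t}=\Gamma_f\circ\Gamma_1\circ\Gamma_f^{\mathrm t}+\Gamma_f\circ\Gamma_2\circ\Gamma_f^{\mathrm t}$ and tracking supports. Precomposition with $\Gamma_f^{\mathrm t}$ affects only the first factor, carrying the first-factor support $D$ of $\Gamma_1$ into a proper closed $D'\subsetneq X'$, so that $\Gamma_f\circ\Gamma_1\circ\Gamma_f^{\mathrm t}$, like $C$, lands in the $D'\times X'$ regime; postcomposition with $\Gamma_f$ affects only the second factor, carrying the second-factor support $V$ into $V':=\overline{f(V\cap U)}$, of dimension $\le i$. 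Rearranging $N\Delta_{X'}+NC=(\subset D'\times X')+(\subset X'\times V')$ and absorbing $C$ into the first term then yields a decomposition of $N[\Delta_{X'}]$ of the shape required in the first paragraph, whence $\CH_0(X')_\Q$ is supported in dimension $i$.

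The step I expect to be the main obstacle is the control of supports under composition of correspondences. Set-theoretically the image $q(p^{-1}(V))$ of $V$ under the graph projections $p\colon\Gamma_f\to X$ and $q\colon\Gamma_f\to X'$ may acquire components of dimension $>i$, arising from the positive-dimensional fibres of $p$ over the indeterminacy locus of $f$. The point to establish with care is that every such excess component contributes a cycle whose \emph{first}-factor support in $X'\times X'$ is a proper closed subset, so that it is absorbed into the harmless $D'\times X'$ term and never inflates the second-factor support $V'$ beyond dimension $i$; this is exactly where the codimension $\ge 2$ of the indeterminacy locus and the identity $\Gamma_f\circ\Gamma_f^{\mathrm t}=[\Delta_{X'}]+C$ enter, and it is what lets the argument run in arbitrary characteristic without invoking resolution of singularities.
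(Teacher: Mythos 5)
Your first two steps are sound: the Bloch--Srinivas equivalence between ``$\CH_{0}(X)_{\Q}$ supported in dimension $i$'' and a decomposition $N[\Delta_X]=\Gamma_1+\Gamma_2$, and the identity $\Gamma_f\circ\Gamma_f^{\mathrm t}=[\Delta_{X'}]+C$ with $C$ supported on $(X'\setminus W')\times X'$, are both correct and standard. The gap is in the conjugation step, exactly where you place it, and the repair you propose does not work. Let $Z\subset X$ be the indeterminacy locus of $f$ and $p,q\colon\Gamma\to X,X'$ the projections from the closure of the graph. The cycle $\Gamma_2\circ\Gamma_f^{\mathrm t}$ is supported on $X'\times V$; split it into components $T$. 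When $\mathrm{pr}_X(T)\not\subset Z$, your absorption mechanism does function: the excess sits over $T\cap(X'\times Z)$, a proper closed subset of the $n$-dimensional variety $T$, so its first-factor image is closed of dimension $\le n-1$. But components with $\mathrm{pr}_X(T)\subset Z$ genuinely occur --- for instance, if $\Gamma_2$ has a component $X\times\{v_0\}$ with $v_0\in V\cap Z$ (nothing in your argument moves $V$ off $Z$, and for $i\ge 1$ this need not be possible), then $\Gamma_2\circ\Gamma_f^{\mathrm t}$ contains $X'\times\{v_0\}$. For such a $T$ the class $\Gamma_f\circ[T]$ is an excess intersection whose natural support is $\mathrm{pr}_{X'}(T)\times q\bigl(p^{-1}(\mathrm{pr}_X(T))\bigr)$: the first factor is \emph{all} of $X'$ (such $T$ dominates $X'$), and the second factor can have dimension as large as $n-1$ (e.g.\ if $f$ contracts a divisor to $v_0$ in the opposite direction). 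So the assertion that every excess component has proper closed first-factor support is false at the level of supports, and neither the codimension $\ge 2$ of $Z$ nor the identity for $\Gamma_f\circ\Gamma_f^{\mathrm t}$ (which controls a different cycle) restores it.

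What is true is a statement about classes rather than supports: for $T=X'\times\{v_0\}$ one computes $\Gamma_f\circ[T]=X'\times\gamma_0$ with $\gamma_0\in\CH_0(X')$ of degree one, and re-representing $\gamma_0$ by an actual zero-cycle shrinks the second-factor support to a finite set of points. But in general (a component $T\subset X'\times(V\cap Z)$ that is not a product, with $V\cap Z$ of positive dimension) this requires proving that $\Gamma_f$ carries zero-cycles of $X_{k(X')}$ supported over $Z$ to classes in $\CH_0(X'_{k(X')})_{\Q}$ supported on a fixed $V'\subset X'$ of dimension $\le i$ defined over $k$ --- and that is essentially the birational invariance of $\CH_0$ under arbitrary field extensions, i.e.\ the very tool your plan was designed to bypass. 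This is why the paper argues as it does: it never conjugates $n$-dimensional correspondences (which cannot be moved), but lets everything act on zero-cycles (which can be moved along curves). It takes $W\subset\Gamma$ of dimension $\le i$ dominating $V$, sets $V'$ equal to the image of $W$ in $X'$, and uses that any point of $\Gamma_\Omega$ can be moved into the locus where both projections are isomorphisms, so that the pushforwards $\CH_0(\Gamma_\Omega)_{\Q}\to\CH_0(X_\Omega)_{\Q}$ and $\CH_0(\Gamma_\Omega)_{\Q}\to\CH_0(X'_\Omega)_{\Q}$ intertwine the isomorphism $\CH_0(X_\Omega)_{\Q}\simeq\CH_0(X'_\Omega)_{\Q}$ of Fulton, Example 16.1.11; surjectivity then transports directly. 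You should either adopt that argument or supply a genuine proof of the class-level absorption, which as far as we can see amounts to the same input.
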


\begin{proof}
Let $V$ be as in Definition \ref{defsuppi}, let $\Gamma\subset X\times X'$ be the closure of the graph of $f$, let $W\subset \Gamma$ be a subvariety of dimension~$\leq i$ dominating $V$, and let $V'\subset X'$ be the image of~$W$. Let $k\subset\Omega$ be an algebraically closed field extension.
As any closed point of~$\Gamma_\Omega$ 
can be moved, by a rational equivalence, to any dense open subset of~$\Gamma_\Omega$
(choose a general curve passing through the point and normalize it),
the push-forward maps $\CH_0(\Gamma_\Omega)_\Q\to \CH_0(X_\Omega)_\Q$ and $\CH_0(\Gamma_\Omega)_\Q\to \CH_0(X'_\Omega)_\Q$ are compatible with the isomorphism  $\CH_0(X_\Omega)_\Q\simeq \CH_0(X'_\Omega)_\Q$ described in \cite[Example 16.1.11]{Fulton}. Hence the surjectivity of the push-forward $\CH_0(W_\Omega)_\Q\to \CH_0(X_\Omega)_\Q$ implies that of $\CH_0(W_\Omega)_\Q\to \CH_0(X'_\Omega)_\Q$, hence of $\CH_0(V'_\Omega)_\Q\to \CH_0(X'_\Omega)_\Q$.
\end{proof}

Codimension $2$ algebraic cycles on varieties with small Chow groups of zero-cycles behave particularly well. 
The following proposition applies for instance if $X_{\overline{k}}$ is rationally chain connected \cite[IV Definition 3.2]{Kollarbook}.

\begin{prop}
\label{decodiag}
Let $X$ be a smooth projective variety over $k$ such that $\CH_{0}(X)_{\Q}$ is supported in dimension $1$.
Then the following hold:
\begin{enumerate}[(i)]
\item The morphism $\phi^2_X:\CH^2(X_{\overline{k}})_{\alg}\to \Ab^2X(\overline{k})$ of (\ref{AJk}) is bijective.
\item The morphism $\lambda^2:\CH^2(X_{\overline{k}})_{\alg}\{\ell\}\to H^{3}_{\et}(X_{\overline{k}},\Z_{\ell}(2))\otimes\Q_\ell/\Z_\ell$ of (\ref{restrAJB}) is bijective for all prime numbers $\ell$ invertible in $k$.
\end{enumerate}
\end{prop}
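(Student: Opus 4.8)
The plan is to run the Clemens--Griffiths engine through the Bloch--Srinivas decomposition of the diagonal. Since $\CH_0(X)_\Q$ is supported in dimension $1$, the class of the generic point of $X_{\overline{k}}$ becomes, over $\overline{k(X)}$, rationally equivalent with $\Q$-coefficients to a $0$-cycle carried by a curve; spreading this out and taking closures produces, over $\overline{k}$, a rational equivalence $\Delta=\Gamma_1+\Gamma_2$ in $\CH^d(X_{\overline{k}}\times X_{\overline{k}})_\Q$ (with $d=\dim X$), where $\Gamma_1$ is supported on $X_{\overline{k}}\times C$ for a curve $C$ and $\Gamma_2$ on $D\times X_{\overline{k}}$ for a divisor $D$. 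I fix resolutions (or alterations) $h\colon\widetilde C\to X_{\overline{k}}$ and $g\colon\widetilde D\to X_{\overline{k}}$ with $\widetilde C,\widetilde D$ smooth projective. Throughout I use that $\phi^2_X$ and $\lambda^2$ commute with correspondences, so the identity $\mathrm{id}=\Gamma_{1*}+\Gamma_{2*}$ acts compatibly on $\CH^2(X_{\overline{k}})_{\alg}$, on $\Ab^2X(\overline{k})$, and on $H^3_{\et}(X_{\overline{k}},\Z_\ell(2))\otimes\Q_\ell/\Z_\ell$.

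I would prove (ii) first. Restricting the decomposition to $U\times X_{\overline{k}}$ with $U=X_{\overline{k}}\setminus D$ kills $\Gamma_2$, so the restriction $H^3_{\et}(X_{\overline{k}},\Q_\ell(2))\to H^3_{\et}(U,\Q_\ell(2))$ is computed by the curve correspondence $\Gamma_1$, whose action on $H^3$ factors through $H^{5-2d}_{\et}(\widetilde C,\cdot)$ via $h_*$ and hence vanishes for $d\ge 3$. Thus $H^3_{\et}(X_{\overline{k}},\Q_\ell(2))$ is supported on $D$, i.e.\ the Gysin map $g_*\colon H^1_{\et}(\widetilde D,\Q_\ell(1))\to H^3_{\et}(X_{\overline{k}},\Q_\ell(2))$ is surjective; tensoring with $\Q_\ell/\Z_\ell$ annihilates the finite cokernel of the integral lattices and keeps it surjective. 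Since $\lambda^1$ is bijective on $\widetilde D$, every class of $H^1_{\et}(\widetilde D,\Z_\ell(1))\otimes\Q_\ell/\Z_\ell$ equals $\lambda^1(w)$ for some $w\in\CH^1(\widetilde D_{\overline{k}})_{\alg}\{\ell\}$, and compatibility of Bloch's map with the proper map $g$ gives $g_*\lambda^1(w)=\lambda^2(g_*w)$ with $g_*w\in\CH^2(X_{\overline{k}})_{\alg}\{\ell\}$. This makes $\lambda^2$ surjective; as it is injective by Merkurjev--Suslin, (ii) follows.

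For (i), the map $\phi^2_X$ is already surjective, so I must establish injectivity, splitting $\Ker\phi^2_X$ into its divisible and torsion parts. Rationally, the decomposition of the diagonal shows that $\phi^2_X\otimes\Q$ is injective (equivalently, algebraic and homological equivalence agree rationally on $1$-cycles), so $\Ker\phi^2_X$ is torsion. On torsion I would use the Galois-equivariant comparison $T_\ell\Ab^2X\hookrightarrow H^3_{\et}(X_{\overline{k}},\Z_\ell(2))$ coming from the theory of the algebraic representative, through which Bloch's map factors as $\lambda^2=\psi\circ\phi^2_X$ on $\{\ell\}$-torsion. Part (ii) then forces $\psi$ to be an isomorphism and $\phi^2_X$ to be bijective on $\{\ell\}$-torsion for every $\ell$ invertible in $k$. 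Combined with the fact that the kernel is torsion, this gives injectivity, hence (i).

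The main obstacle is the cohomological input that $H^3_{\et}(X_{\overline{k}},\Q_\ell(2))$ is supported on a divisor: this is where the hypothesis on $\CH_0$ is genuinely used, and it must be extracted from the decomposition of the diagonal rather than assumed. Once this coniveau statement is available, the remaining points are bookkeeping: passing from the $\Q$-coefficient decomposition to the integral assertions (controlled by the injectivity of $\lambda^2$ and the comparison with $\Ab^2X$), handling the singularities of $D$ (resolution or alteration, noting that Gysin images still span), and the low-dimensional cases $d\le 2$, where $\Gamma_1$ no longer acts trivially on $H^3$ and the surface case must be treated through the Albanese. Some additional care with $p$-torsion in characteristic $p$ is needed, but the analysis at $\ell\ne p$ above captures the essential content.
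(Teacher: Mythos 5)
Your overall route is the same as the paper's: both assertions are extracted from the Bloch--Srinivas decomposition $N\Delta=\Gamma_1+\Gamma_2$ (the paper proves (i) by citing \cite{BS} ``Theorem 1(i) and its proof'', with de Jong alterations replacing resolution of singularities, and proves (ii) by the same correspondence argument). Your treatment of (ii) is correct in substance, but your correspondence conventions are crossed: you restrict the \emph{first} factor to $U=X_{\overline{k}}\setminus D$ (killing $\Gamma_2$, supported on $D\times X_{\overline{k}}$), yet you let $\Gamma_1$ act by pulling back along the first factor and pushing forward via $h_*$ into $H^{5-2d}_{\et}(\widetilde{C},\cdot)$; that is the action computing a map \emph{out of} the open piece, not the restriction $H^3_{\et}(X_{\overline{k}},\Q_\ell(2))\to H^3_{\et}(U,\Q_\ell(2))$. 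Use the transpose action $\alpha\mapsto p_{1*}(\Gamma\smile p_2^*\alpha)$ throughout: then $\Delta$ computes restriction, $\Gamma_1\subset X_{\overline{k}}\times C$ acts through $h^*\alpha\in H^3_{\et}(\widetilde{C},\Q_\ell)=0$ for \emph{every} $d$ (so both your hypothesis $d\geq 3$ and your separate ``surface case via the Albanese'' are artifacts of the convention), and $\Gamma_2$ acts as $g_*\circ\widetilde{\Gamma}_2^{\,*}$, factoring through $H^1_{\et}(\widetilde{D},\Q_\ell(1))$. This gives $N\cdot\mathrm{id}=g_*\circ\widetilde{\Gamma}_2^{\,*}$ on $H^3$ directly, so $g_*$ is surjective with $\Q_\ell$-coefficients without any appeal to cohomology supported on the singular divisor $D$ --- which disposes of the issue you flag about ``Gysin images still spanning''. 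The rest of your (ii) (finite cokernel dies after $\otimes\,\Q_\ell/\Z_\ell$, surjectivity of $\lambda^1$, compatibility with $g_*$, Merkurjev--Suslin injectivity) matches the paper, which phrases the same computation as: the cokernel of $\lambda^2$ is $N$-torsion and divisible, hence zero.

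Part (i) is where the genuine gaps are. First, your justification that $\Ker\phi^2_X$ is torsion is wrong as stated: injectivity of $\phi^2_X\otimes\Q$ is \emph{not} equivalent to ``algebraic and homological equivalence agree rationally on $1$-cycles'' (the latter concerns the quotient $\CH^2_{\mathrm{hom}}/\CH^2_{\alg}$, not the kernel of $\phi^2_X$), and you give no actual argument. The correct deduction uses the universal property: $\widetilde{\Gamma}_2^{\,*}$ is a \emph{regular} homomorphism $\CH^2(X_{\overline{k}})_{\alg}\to\Pic^0(\widetilde{D})(\overline{k})$, hence factors through $\phi^2_X$, so $\Ker\phi^2_X\subseteq\Ker(N\cdot\mathrm{id})$; the kernel is even killed by $N$. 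Second, your key input on torsion --- the factorization $\lambda^2=\psi\circ\phi^2_X$ on $\ell$-primary torsion via an inclusion $T_\ell\Ab^2X\hookrightarrow H^3_{\et}(X_{\overline{k}},\Z_\ell(2))$ --- is a substantive theorem of Murre \cite{Murre}, refined in \cite{ACMV1}, valid for arbitrary smooth projective varieties, and it must be quoted in that generality: in this paper the comparison (\ref{isoH1H3}) is \emph{constructed from} the proposition you are proving, so invoking it here would be circular. Third, and most seriously, in characteristic $p>0$ your method only controls the $\ell$-primary part of $\Ker\phi^2_X$ for $\ell\neq p$, since Bloch's map exists only for $\ell$ invertible in $k$; nothing prevents $p\mid N$, so the $p$-primary part of the kernel is untouched, and ``the analysis at $\ell\neq p$ captures the essential content'' is not a proof of the bijectivity asserted in (i) over a general perfect field. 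Closing this requires the $p$-adic analogue of Bloch's map (Gros--Suwa), or falling back on the cited proof of \cite{BS} as adapted by Murre and Achter--Casalaina-Martin--Vial, which is exactly what the paper does.
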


\begin{proof}
This follows from the decomposition of the diagonal technique of Bloch and Srinivas \cite{BS}.
Assertion (i) is \cite[Theorem 1 (i) and its proof]{BS}, where one may replace the hypothesis on resolution of singularities by de Jong's alteration theorem \cite[Theorem 4.1]{dJ}.
Indeed, in the notation of \cite[proof of Theorem~1~(i)]{BS},
if~$\widetilde{D}$ is now allowed to be any smooth projective variety over~$\overline{k}$
endowed with a surjective generically finite map
of degree~$m \geq 1$ to~$D$, one may replace~$\Gamma_1$, $\Gamma_2$, $N$ with $m\Gamma_1$, $m\Gamma_2$, $mN$
to ensure that $[\Gamma_2]_*$ still factors through $\Pic^0(\widetilde{D})$.
Assertion~(ii) is proved in exactly the same way as~(i). More precisely, we have already seen that the map (\ref{restrAJB}) is injective if $c=2$ with no hypothesis on~$X$. Using the fact that (\ref{restrAJB}) is surjective if $c=1$, the decomposition of the diagonal argument shows that its cokernel is $N$-torsion for some $N>0$. Being a quotient of $H^{3}_{\et}(X_{\overline{k}},\Z_{\ell}(2))\otimes\Q_\ell/\Z_\ell$ it is also $N$-divisible, hence it vanishes.
\end{proof}

\subsection{The intermediate Jacobian}
\label{secijk}

Let $X$ be a smooth projective threefold over~$k$ such that $\CH_{0}(X)_{\Q}$ is supported in dimension $1$. 
Applying the $\ell$\nobreakdash-adic Tate module functor~$T_\ell$ to the morphisms (\ref{AJk}) and (\ref{restrAJB}), which are bijective by Proposition \ref{decodiag}, taking the identification
$T_\ell \Ab^2X(\overline{k})=H^1_{\et}(\Ab^2X_{\overline{k}},\Z_\ell)^\vee$ into account, and using,
for $M=H^{3}_{\et}(X_{\overline{k}},\Z_{\ell}(2))$, the isomorphism $M/(M\{\ell\})\xrightarrow{\sim}T_{\ell}(M\otimes\Q_\ell/\Z_\ell)$, valid for all finitely generated $\Z_{\ell}$-modules $M$, yields an isomorphism
\begin{equation}
\label{isoH1H3}
T_{\ell}(\lambda^2\circ(\phi_X^2)^{-1}):H^1_{\et}(\Ab^2X_{\overline{k}},\Z_\ell)^\vee\to  H^{3}_{\et}(X_{\overline{k}},\Z_{\ell}(2))/(\torsion).
\end{equation}

We will consider the following property of the smooth projective threefold $X$ over~$k$ (under the hypothesis that $\CH_{0}(X)_{\Q}$ is supported in dimension $1$).

\begin{property}
\label{assum}
There exists $\theta\in\NS(\Ab^2X_{\overline{k}})$ satisfying the following assertions.
\begin{enumerate}[(i)]
\item For all prime numbers $\ell$ invertible in $k$, the image $c_{1,\ell}(-\theta)$ of $-\theta$ by the $\ell$-adic first Chern class
$$c_{1,\ell}:\NS(\Ab^2X_{\overline{k}})\inj H^2_{\et}(\Ab^2X_{\overline{k}},\Z_{\ell}(1))=\Big(\bigwedge^2H^1_{\et}(\Ab^2X_{\overline{k}},\Z_\ell)\Big)(1)$$
corresponds, via the isomorphism (\ref{isoH1H3}),
 to the cup product map 
$$\bigwedge^2 H^3_{\et}(X_{\overline{k}},\Z_{\ell}(2))\xrightarrow{\smile} H^6_{\et}(X_{\overline{k}},\Z_{\ell}(4))\xrightarrow{\deg}\Z_{\ell}(1).$$
\item The class $\theta\in\NS(\Ab^2X_{\overline{k}})$ is a principal polarization of $\Ab^2X_{\overline{k}}$.
\end{enumerate}
\end{property}

Property \ref{assum} only depends on $X_{\overline{k}}$. Hence, whenever we need to verify it, we may replace $k$ with~$\overline{k}$ and $X$ with $X_{\overline{k}}$.

A class $\theta$ as in Property~\ref{assum}~(i) is unique since $c_{1,\ell}$ is injective (by \cite[(2) p.~485]{SkZ} and since $\NS(\Ab^2X_{\overline{k}})$ has no torsion).
 Being unique, it must be $\Gamma_k$-invariant, by the $\Gamma_k$-equivariance of $c_{1,\ell}$, of $\phi^2_X$, of $\lambda^2$ and of the cup product map. Consequently, if $X$ satisfies Property \ref{assum}, then $(\Ab^2X,\theta)$ is a ppav over~$k$, which we denote by~$J^3X$ and call the \textit{intermediate Jacobian} of $X$.

Although we will not use it in the sequel, the following proposition, which applies if $k$ has characteristic $0$ and $X_{\overline{k}}$ is rationally connected, is a motivation for Property~\ref{assum} (and its proof is a justification for the notation $J^3X$).

\begin{prop}
\label{propRC}
A smooth projective threefold over a field $k$ of characteristic $0$ such that $\CH_{0}(X)_{\Q}$ is supported in dimension $0$ satisfies Property~\ref{assum}.
\end{prop}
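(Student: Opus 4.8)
The plan is to reduce to $k=\C$ and then read off the statement from the classical theory of Griffiths intermediate Jacobians. Since Property~\ref{assum} only depends on $X_{\overline{k}}$, I would first replace $k$ by $\overline{k}$, and then, $\overline{k}$ being algebraically closed of characteristic~$0$, spread $X$ out over a finitely generated subfield $k_0\subset\overline{k}$ and fix an embedding $k_0\hookrightarrow\C$. This reduction is harmless: the formation of $\Ab^2 X$ commutes with extensions of algebraically closed fields of characteristic~$0$ by \cite{ACMV1}, the groups $\NS(\Ab^2 X_{\overline{k}})$ and $H^{*}_{\et}(X_{\overline{k}},\Z_\ell)$ are invariant under such extensions, and the maps $\phi^2_X$, $\lambda^2$, the $\ell$-adic Chern class and the cup product are all compatible with base change; hence a class $\theta$ satisfying (i) and (ii) over $\C$ yields one over $\overline{k}$.

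Over $\C$, the hypothesis that $\CH_0(X)_\Q$ is supported in dimension~$0$ lets one run the Bloch--Srinivas decomposition of the diagonal \cite{BS}, which forces $H^3(X,\Z)$ to be supported on a divisor, i.e.\ to have coniveau~$\geq 1$. In particular $H^{3,0}(X)=0$, so the Hodge structure on $H^3(X,\Z)/(\torsion)$ has only the types $(2,1)$ and $(1,2)$ and the Griffiths intermediate Jacobian $J^3(X)=H^3(X,\C)/(F^2H^3+H^3(X,\Z))$ is an abelian variety; moreover the complex Abel--Jacobi map is then surjective with the universal property of the algebraic representative, giving a canonical isomorphism $\Ab^2 X_\C\simeq J^3(X)$ compatible with $\phi^2_X$. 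I would take for $\theta$ the polarization attached to the cup product form $Q\colon H^3(X,\Z)\times H^3(X,\Z)\to H^6(X,\Z)=\Z$: since the weight is odd, $Q$ is alternating; by Poincar\'e duality it is unimodular on $H^3(X,\Z)/(\torsion)$; and it satisfies the Hodge--Riemann relations for the level-$1$ Hodge structure, so $\pm Q$ defines an ample class, i.e.\ a principal polarization of $J^3(X)=\Ab^2 X_\C$. This yields assertion (ii).

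For assertion (i) I would pass to $\ell$-adic realizations through the comparison isomorphism $H^{*}_{\et}(X,\Z_\ell)\simeq H^{*}(X(\C),\Z)\otimes\Z_\ell$. The crucial point is that Bloch's $\ell$-adic Abel--Jacobi map $\lambda^2$ and Murre's map $\phi^2_X$ are the $\ell$-adic realizations of the classical Abel--Jacobi map (see \cite{Bloch}), so that the isomorphism~(\ref{isoH1H3}) is, up to the Tate twist by $\Z_\ell(2)$, the comparison isomorphism carrying $H_1(J^3X)=H^3(X,\Z)/(\torsion)$ onto $H^3(X,\Z_\ell(2))/(\torsion)$. Under this identification the first Chern class of the polarization line bundle is exactly the alternating form defining the polarization, namely $\pm Q$; since the comparison isomorphism is multiplicative and compatible with the trace map, this matches the $\ell$-adic cup product of~(i), the sign being absorbed into the passage from $\theta$ to $-\theta$.

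The main obstacle is this last compatibility. One must check that the isomorphism~(\ref{isoH1H3}), built abstractly out of $\phi^2_X$ and $\lambda^2$, really coincides with the comparison isomorphism realizing Griffiths' identification $H_1(J^3X)\simeq H^3(X,\Z)/(\torsion)$, and one must keep careful track of the Tate twists and of the sign so that it is $c_{1,\ell}(-\theta)$, rather than $c_{1,\ell}(\theta)$, that corresponds to the cup product. Both the identification $\Ab^2 X_\C\simeq J^3(X)$ and the compatibility of Bloch's map with the complex Abel--Jacobi map are available in the works of Murre \cite{Murre}, Bloch \cite{Bloch} and Achter--Casalaina-Martin--Vial \cite{ACMV1,ACMV}, so the argument is one of assembling known compatibilities correctly rather than of proving something essentially new.
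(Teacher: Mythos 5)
Your proposal is correct and takes essentially the same approach as the paper's proof: reduce to $k=\C$ using the invariance of $\Ab^2X$ under extensions of algebraically closed fields of characteristic $0$, identify $\Ab^2X_\C$ with Griffiths' intermediate Jacobian via the surjectivity of the transcendental Abel--Jacobi map and Murre's universal-property theorem, obtain the principal polarization from the cup-product form (unimodularity from Poincar\'e duality, positivity from the Hodge--Riemann relations), and deduce assertion (i) from the Betti/$\ell$-adic comparison together with Bloch's compatibility of $\lambda^2$ with the transcendental Abel--Jacobi map. The compatibility you single out as the main obstacle is treated in the paper exactly as you suggest, by citing these same known results of Bloch and Murre rather than by any new argument.
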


\begin{proof}
Since Property \ref{assum} only depends on $X_{\overline{k}}$, we may assume that $k=\overline{k}$. By the Lefschetz principle, using in particular that the formation of $\Ab^2X$ commutes with extensions of algebraically closed fields  of characteristic $0$ \cite[Theorem~3.7]{ACMV1}, one may further assume that $k=\C$. 
By decomposition of the diagonal, one has $H^0(X,\Omega^1_X)=H^0(X,\Omega_X^3)=0$ (see \cite[Corollary~1.10]{Laterveer}).

Let us temporarily denote by $J^3X$ Griffiths' intermediate Jacobian of $X$, that is, the complex torus $J^3X:=H^2(X,\Omega_X^1)/\Ima(H^3(X(\C),\Z(2)))$ (see~\cite{CG}). The transcendental Abel--Jacobi map $\AJ^2:\CH^2(X)_\alg\to J^3X(\C)$ is surjective by its compatibility with Bloch's $\ell$-adic Abel--Jacobi map (\ref{AJB}) \cite[Proposition~3.7]{Bloch}, and by Proposition~\ref{decodiag}~(ii). It then follows from \cite[Theorem~C~p.~229]{Murre} that $\AJ^2$ satisfies the universal property (\ref{AJk}) of $\phi^2_X$, yielding an identification $J^3X\simeq \Ab^2X$.

Let  $\gamma\in H^2(J^3X(\C),\Z(1))=(\bigwedge^2 H_1(J^3X(\C),\Z)^\vee)(1)$ be such that $-\gamma$ corresponds, under the identification $H_1(J^3X(\C),\Z)=H^3(X(\C),\Z(2))/(\torsion)$,
to the cup product map $\bigwedge^2 H^{3}(X(\C),\Z(2))\xrightarrow{\smile} H^6(X(\C),\Z(4))\xrightarrow{\deg}\Z(1)$. 
We claim that $\gamma$ is the first Chern class of a principal polarization $\theta$ on $\Ab^2(X)=J^3X$. To see it, one has to show that $\gamma$ is unimodular and that its associated Hermitian form is positive definite (see \cite[\S 2.1, \S 4.1]{BL}). These assertions are respectively consequences of Poincar\'e duality and of the Hodge--Riemann relations \cite[Th\'eor\`eme~6.32]{Voisin}.

That $\theta$ has the required properties follows from comparison between $\ell$-adic and Betti cohomology, from the fact that we identified $\Ab^2X$ and $J^3X$ using $\phi^2_X$ and~$\AJ^2$, and from the compatibility of $\AJ^2$ and $\lambda^2$ \cite[Proposition~3.7]{Bloch}.
\end{proof}

We do not know if Proposition \ref{propRC} always holds if $k$ has positive characteristic.
We will verify it if $X$ is $\overline{k}$-rational in Corollary \ref{CGk}.

\begin{rem}
Over the field $k=\R$ of real numbers, there is a more general and much easier way to construct intermediate Jacobians than Proposition \ref{propRC}. Indeed, let~$X$ be a smooth projective threefold over $\R$ such that $H^0(X,\Omega^1_X)=H^0(X,\Omega_X^3)=0$ and let $J^3X_\C$ denote
the intermediate Jacobian of $X_\C$ constructed by transcendental means
as in~\cite{CG}.
We recall that the complex analytic space $J^3X_\C(\C)$ is by definition the cokernel
of the composition
\begin{equation}
\label{compojacint}
H^3(X(\C),\Z(2)) \to H^3(X(\C),\C) \to H^2(X_\C,\Omega^1_{X_\C})
\end{equation}
of the map induced by the inclusion $\Z(2)\subset \C$ with the projection stemming from the Hodge decomposition.
On $H^3(X(\C),\C)$, one can consider the $\C$-linear involution~$F_\infty$ induced by the complex conjugation
involution of~$X(\C)$
and the two $\C$\nobreakdash-antilinear involutions~$F_{\mathrm{B}}$ and~$F_{\mathrm{dR}}$
corresponding, respectively, to the real structures
$H^3(X(\C),\C)=H^3(X(\C),\R) \otimes_{\R} \C$
and $H^3(X(\C),\C)=H^3_{\mathrm{dR}}(X/\R)\otimes_{\R} \C$.
They all commute, and
are related by the formula $F_{\mathrm{dR}} \circ F_{\mathrm{B}} \circ F_{\infty}=\Id$
\cite[Proposition~1.4]{DeligneCorvallis}. It follows that $F_{\mathrm{dR}}$ stabilises the image of the first arrow of (\ref{compojacint}). Also denoting by $F_{\mathrm{dR}}$ the $\C$-antilinear involution of $H^2(X_\C,\Omega^1_{X_\C})$ associated with the real structure $H^2(X_\C,\Omega^1_{X_\C})=H^2(X,\Omega^1_{X})\otimes_{\R}\C$ and noting that the second arrow of (\ref{compojacint}) is $F_{\mathrm{dR}}$-equivariant, we deduce that $F_{\mathrm{dR}}$ stabilises the image of (\ref{compojacint}) and thus equips~$J^3 X_{\C}(\C)$ with an antiholomorphic
involution. The polarization of~$J^3X_{\C}$, being given by the opposite of the cup product
map $H^3(X(\C),\Z(2)) \times H^3(X(\C),\Z(2)) \to \Z(1)$, is preserved by this involution
since the cup product is equivariant with respect to~$F_{\mathrm{B}}$ and to~$F_{\infty}$.
Hence~$J^3 X_{\C}$ descends to a ppav $J^3X$ over~$\R$, which is the sought for intermediate Jacobian of $X$. This method avoids the use of the deep results of Bloch \cite{Bloch} and Murre \cite{Murre}, and would be sufficient for the proof of Theorem \ref{main}.
\end{rem}

\subsection{Birational behaviour}
\label{geomrat}

We now show that the validity of Property \ref{assum} is a birational invariant of smooth projective threefolds over $k$. Recall that the assertion that $\CH_{0}(X)_{\Q}$ is supported in dimension $1$, which is required for Property \ref{assum} to make sense, is a birational invariant of smooth projective varieties over $k$ by Lemma~\ref{lembir}.

\begin{thm}
\label{thbir}
Let $X$ and $Y$ be birational smooth projective threefolds over $k$ such that $\CH_{0}(Y)_{\Q}$ is supported in dimension $1$. If $Y$ satisfies Property~\ref{assum}, then so does $X$. Moreover, there exist smooth projective curves $C$ and $C'$ over $k$ and an isomorphism $J^3Y\times J^1C\simeq J^3X\times J^1C'$ of ppavs over $k$.
\end{thm}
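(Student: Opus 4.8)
The plan is to reduce the statement to the behaviour of the intermediate Jacobian under a single blow-up along a smooth centre, and then to propagate Property~\ref{assum} along a common smooth model. First I would invoke the resolution of indeterminacy established in Proposition~\ref{resindet} to produce a smooth projective threefold $Z$ over $k$ together with birational morphisms $g\colon Z\to X$ and $h\colon Z\to Y$, each of which is a composition of blow-ups along smooth centres that are either closed points or smooth projective curves defined over $k$. Since being supported in dimension $1$ is a birational invariant (Lemma~\ref{lembir}), all the intermediate threefolds in these two towers satisfy the standing hypothesis, so Property~\ref{assum} makes sense throughout. Granting this, the theorem reduces to the following \emph{blow-up lemma}: if $\pi\colon X'\to X$ is the blow-up of a smooth projective threefold $X$ (with $\CH_0(X)_\Q$ supported in dimension $1$) along a smooth centre $W$ that is a point or a smooth projective curve $C$, then $X$ satisfies Property~\ref{assum} if and only if $X'$ does, and in that case there is an isomorphism of ppavs $J^3X'\simeq J^3X\times J^1W$ over $k$, with $J^1W=0$ when $\dim W=0$. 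Telescoping the lemma along $h$ (forward direction) shows that $Z$ satisfies Property~\ref{assum} with $J^3Z\simeq J^3Y\times J^1C$, and telescoping along $g$ (backward direction) shows that $X$ satisfies it with $J^3Z\simeq J^3X\times J^1C'$; comparing the two yields the desired isomorphism $J^3Y\times J^1C\simeq J^3X\times J^1C'$, the curves $C$ and $C'$ being the disjoint unions of the curves blown up in the two towers.

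To prove the lemma at the level of abelian varieties, I would use the classical blow-up formulas. When $W=C$ is a curve, denote by $j\colon E\hookrightarrow X'$ the exceptional divisor and by $p\colon E\to C$ its structural $\bP^1$-bundle projection. The decomposition $\CH^2(X'_{\overline k})\simeq \CH^2(X_{\overline k})\oplus \Pic(C_{\overline k})$ restricts on algebraically trivial parts to $\CH^2(X'_{\overline k})_\alg\simeq \CH^2(X_{\overline k})_\alg\oplus \Pic^0(C_{\overline k})$, the second summand being the image of $\alpha\mapsto j_*p^*\alpha$. Combined with the universal property of $\phi^2$ and the $\Gamma_k$-equivariant, $k$-rational functoriality of $\Ab^2$ established in \S\ref{AJMurre}, this produces an isomorphism $\Ab^2X'\simeq \Ab^2X\times J^1C$ of abelian varieties over $k$; the parallel $\ell$-adic formula $H^3_\et(X'_{\overline k},\Z_\ell(2))\simeq H^3_\et(X_{\overline k},\Z_\ell(2))\oplus H^1_\et(C_{\overline k},\Z_\ell(1))$ is compatible with it through the isomorphism (\ref{isoH1H3}). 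Blowing up a point only adds classes in even cohomological degree, so it changes neither $\Ab^2$ nor $H^3_\et$, and the point case is immediate.

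The heart of the argument is the polarization. Writing $A=\Ab^2X$ and $B=J^1C$, I would compute the cup-product form on $H^3_\et(X'_{\overline k},\Z_\ell(2))$ in the above decomposition. Using the projection formula and the self-intersection formula $j^*j_*(\beta)=\beta\cup c_1(N_{E/X'})$ together with $N_{E/X'}\simeq\sO_E(-1)$, one finds that the two summands are orthogonal (the cross terms factor through $H^3_\et$ of a curve, which vanishes) and that the form on the $B$-summand reproduces, up to the sign built into Property~\ref{assum}~(i), the first Chern class of the canonical principal polarization $\theta_C$ of $J^1C$. By injectivity of $c_{1,\ell}$ (as recalled in \S\ref{secijk}) and the compatibility of the Néron--Severi and Künneth decompositions, the vanishing of the cohomological cross term forces the $\Hom(A,\widehat B)$-component of any class satisfying \ref{assum}~(i) to vanish, so such a class is of product form. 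In the forward direction, given $\theta_X$ on $A$ one checks that $\theta_X\boxplus\theta_C$ satisfies \ref{assum}~(i) for $X'$ and is a principal polarization (a product of principal polarizations), so $X'$ satisfies Property~\ref{assum} with $J^3X'=J^3X\times J^1C$. In the backward direction, given a principal polarization $\theta_{X'}$ satisfying \ref{assum}~(i) for $X'$, its vanishing cross term yields $\theta_{X'}=\theta_A\boxplus\theta_B$; restricting the ample class $\theta_{X'}$ to the slices $A\times\{0\}$ and $\{0\}\times B$ shows that $\theta_A,\theta_B$ are ample, while block-diagonality of the isogeny $\phi_{\theta_{X'}}=\phi_{\theta_A}\oplus\phi_{\theta_B}$ forces each factor to be a principal polarization. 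Its Chern class being the $A$-block of the cup-product form, $\theta_A$ satisfies \ref{assum}~(i) for $X$, so $X$ satisfies Property~\ref{assum}; this last splitting may alternatively be phrased through the decomposition of ppavs into indecomposable factors recalled in \S\ref{ppavk}.

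I expect the main obstacle to lie in the reduction step rather than in the polarization bookkeeping: producing the common model $Z$ with both $g$ and $h$ factored into blow-ups along \emph{smooth} centres defined over $k$, uniformly in the characteristic of $k$, is precisely the content of Proposition~\ref{resindet} and relies on resolution of singularities for threefolds together with embedded resolution for the surfaces swept out by the base loci. A secondary subtlety, already anticipated above, is that everything must be carried out $\Gamma_k$-equivariantly and descended to $k$: the $k$-rationality of $\Ab^2$ from \S\ref{AJMurre}, the $\Gamma_k$-invariance of the theta class coming from the uniqueness in Property~\ref{assum}~(i), and the fact that a $\Gamma_k$-orbit of blown-up geometric curves assembles into a single smooth projective curve over $k$, are what make the final isomorphism one of ppavs over $k$ and not merely over $\overline k$.
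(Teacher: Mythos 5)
Your reduction step contains a genuine gap: Proposition~\ref{resindet} does not produce what you claim. Given the birational map $f\colon Y\dashrightarrow X$, it yields a tower of smooth blow-ups $g\colon Z\to Y$ and a \emph{morphism} $h\colon Z\to X$ with $h=f\circ g$; only $g$ is known to be a composition of blow-ups along smooth centres, while $h$ is merely a birational morphism. Asserting that both projections from a common model are towers of smooth blow-ups amounts to a strong factorization statement, which is open even over $\C$ and certainly unavailable over perfect fields of positive characteristic, which the theorem covers. Your telescoping argument therefore breaks down exactly where Property~\ref{assum} and the ppav decomposition must be transported along $h$ (and along the analogous morphism arising from $f^{-1}$): your blow-up lemma, being a statement about blow-ups only, does not apply to $h$, and no amount of re-applying Proposition~\ref{resindet} removes the need to descend along an arbitrary birational morphism.

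This is precisely the point the paper addresses with Lemma~\ref{lemdom}, for which your proposal has no counterpart: for an arbitrary birational morphism $f\colon Y\to X$ of smooth projective threefolds, Property~\ref{assum} passes from the source to the target, and $J^3Y\simeq J^3X\times B$ for some ppav $B$ over $k$. Its proof uses $f_+\circ f^+=\Id$ (coming from $f_*\circ f^*=\Id$ on Chow groups and the surjectivity of $\phi^2_X$) to split $\Ab^2Y\simeq f^+(\Ab^2X)\times\Ker(f_+)$, and the projection formula to see that this splitting is orthogonal for the cup-product pairing, so that the principal polarization of $\Ab^2Y$ decomposes as a product. Note also that, because $h$ is not a blow-up tower, the complementary factor $B$ is a priori only a ppav, not visibly a product of Jacobians; the paper recovers the stated form of the conclusion by running the argument symmetrically for $f^{-1}$ and invoking the uniqueness of the decomposition of a ppav into indecomposable factors (\S\ref{ppavk}) to conclude that $B$ and $B'$ are Jacobians of smooth projective curves. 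Your polarization bookkeeping for a single blow-up (orthogonality via the vanishing of $H^3$ of a curve, the sign, and the splitting of a product principal polarization) is essentially the paper's Lemma~\ref{lembl} together with the splitting step of Lemma~\ref{lemdom}, and is sound; but without a lemma handling arbitrary birational morphisms the proof cannot be completed.
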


It follows that one can associate with any smooth projective $\overline{k}$\nobreakdash-rational threefold $X$ over $k$ a ppav $J^3X$ over $k$ that gives rise to an obstruction to the $k$-rationality of $X$ extending \cite[Corollary 3.26]{CG}. 

\begin{cor}
\label{CGk}
A smooth projective $\overline{k}$-rational threefold $X$ over $k$ satisfies Property~\ref{assum}. 
If $X$ is moreover $k$-rational, then its intermediate Jacobian $J^3X$ is isomorphic, as a ppav over $k$, to the Jacobian of a smooth projective curve over~$k$.
\end{cor}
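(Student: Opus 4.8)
The plan is to derive both assertions from the birational invariance Theorem~\ref{thbir}, taking projective space as the comparison variety, and to conclude the second assertion by a cancellation argument based on the unique factorisation of ppavs into indecomposables recorded in \S\ref{ppavk}. I would begin by noting the relevant properties of $\bP^3$. Since $\CH^2(\bP^3_{\overline{k}})_{\alg}=0$, the surjectivity of $\phi^2_{\bP^3}$ forces $\Ab^2\bP^3=0$, so $\bP^3$ satisfies Property~\ref{assum} with $\theta=0$ and $J^3\bP^3=0$; moreover $\CH_0(\bP^3)_{\Q}$ is supported in dimension $0$, hence in dimension $1$, so that Theorem~\ref{thbir} applies with $Y=\bP^3$.

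For the first assertion I would first check that $\CH_0(X)_{\Q}$ is supported in dimension $1$, which is needed for Property~\ref{assum} to be meaningful. For every algebraically closed extension $\Omega$ of $k$ we have $\overline{k}\subset\Omega$, so $X_{\Omega}$ is $\Omega$-rational and $\CH_0(X_{\Omega})_{\Q}=\Q$ is generated by the class of a point; taking for $V$ any closed point of $X$ in Definition~\ref{defsuppi} shows that $\CH_0(X)_{\Q}$ is supported in dimension $0$. As Property~\ref{assum} depends only on $X_{\overline{k}}$, I may then assume $k=\overline{k}$, in which case $X$ is birational to $\bP^3_{\overline{k}}$, and Theorem~\ref{thbir} immediately yields that $X$ satisfies Property~\ref{assum}.

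For the second assertion, assume in addition that $X$ is $k$-rational, hence birational over $k$ to $\bP^3_k$. Applying Theorem~\ref{thbir} with $Y=\bP^3_k$ provides smooth projective curves $C$ and $C'$ over $k$ together with an isomorphism $J^3\bP^3_k\times J^1C\simeq J^3X\times J^1C'$ of ppavs over $k$. Since $J^3\bP^3_k=0$, this becomes $J^1C\simeq J^3X\times J^1C'$, and it remains to show that a direct factor of the Jacobian of a curve over $k$ is again the Jacobian of a curve over $k$.

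For this last step I would use that every ppav over $k$ decomposes, uniquely up to reordering, as a product of indecomposable sub-ppavs (\S\ref{ppavk}), and that the indecomposable factors of $J^1C$ are precisely the Jacobians $J^1C^{(j)}$ of the connected components $C^{(j)}$ of $C$ over $k$, each indecomposable because $C^{(j)}$ is connected. The isomorphism $J^1C\simeq J^3X\times J^1C'$ exhibits a second decomposition of $J^1C$ into indecomposables, so by uniqueness the indecomposable factors of $J^3X$ match, with multiplicity, a subfamily of the $J^1C^{(j)}$. Assigning to each such factor a distinct component $C^{(j)}$ with isomorphic Jacobian and letting $D\subset C$ be the disjoint union of the chosen components, I obtain $J^3X\simeq J^1D$ as ppavs over $k$, with $D$ a smooth projective curve over $k$. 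The main obstacle is precisely this cancellation: it rests on the Krull--Schmidt-type uniqueness of the indecomposable decomposition, which over $k$ follows by Galois descent from the case $k=\overline{k}$ treated in \cite{CG}, including the correct bookkeeping of multiplicities among possibly isomorphic factors.
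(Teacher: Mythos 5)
Your proposal is correct and follows essentially the same route as the paper's proof: apply Theorem~\ref{thbir} with $Y=\bP^3$ (over $\overline{k}$ for the first assertion, over $k$ for the second), use $\CH^2(\bP^3_{\overline{k}})_{\alg}=0$ to get $J^3\bP^3=0$, and conclude by the uniqueness of the decomposition of a ppav into indecomposables from \S\ref{ppavk}, whose factors for $J^1C$ are Jacobians of connected curves. Your only additions are a direct verification that $\CH_0(X)_\Q$ is supported in dimension $0$ (the paper instead invokes the birational invariance of this condition, Lemma~\ref{lembir}) and a more explicit Krull--Schmidt bookkeeping, neither of which changes the argument.
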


\begin{proof}
To verify the first assertion, we may work over $\overline{k}$ as Property \ref{assum} only depends on $X_{\overline{k}}$. It then follows from Theorem \ref{thbir} applied with $Y=\bP^3_{\overline{k}}$. Indeed, $\bP^3_{\overline{k}}$ satisfies Property \ref{assum}: since $ \CH^2(\bP^3_{\overline{k}})_{\alg}=0$, one even has $\Ab^2\bP^3_{\overline{k}}=0$.

To show the second assertion, we apply Theorem \ref{thbir} with $Y=\bP^3_k$. By the above, one has ${J^3\bP^3_k=0}$, and we obtain an isomorphism $J^1C\simeq J^3X\times J^1C'$ of ppavs over~$k$ for some smooth projective curves $C$ and $C'$ over $k$. 
Since the indecomposable factors of $J^1C$ are Jacobians of smooth projective connected curves over $k$, the uniqueness of the decomposition of $J^3X$ as a product of indecomposable factors (see \S\ref{ppavk}) shows that~$J^3X$ is itself a product of Jacobians of smooth projective connected curves over~$k$, hence is the Jacobian of a smooth projective curve over~$k$.
\end{proof}

We first study the behaviour of Property \ref{assum} under birational morphisms.

\begin{lem}
\label{lemdom}
Let $f:Y\to X$ be a birational morphism of smooth projective threefolds over $k$ such that $\CH_{0}(Y)_{\Q}$ is supported in dimension $1$. If $Y$ satisfies Property~\ref{assum}, then so does $X$
and moreover there is
an isomorphism $J^3Y \simeq J^3X \times B$ of ppavs over~$k$
for some ppav~$B$ over~$k$.
\end{lem}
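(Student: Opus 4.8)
The plan is to compare the data attached to $X$ and to $Y$ through the degree-one morphism $f$, and to deduce everything from three ingredients: the functorialities of \S\ref{AJMurre}, the projection formula, and the injectivity of $c_{1,\ell}$. First, since $f$ is birational and $\CH_0(Y)_\Q$ is supported in dimension $1$, Lemma \ref{lembir} shows that $\CH_0(X)_\Q$ is supported in dimension $1$, so that (\ref{isoH1H3}) and Property \ref{assum} make sense for both $X$ and $Y$. The projection formula gives $f_*\circ f^*=\Id$ on $\CH^2(X_{\overline{k}})$; combined with the defining relations $\phi^2_Y\circ f^*=f^+(\overline{k})\circ\phi^2_X$ and $\phi^2_X\circ f_*=f_+(\overline{k})\circ\phi^2_Y$ of \S\ref{AJMurre} and with the surjectivity of $\phi^2_X$, this yields $f_+\circ f^+=\Id_{\Ab^2 X}$. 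Hence $e:=f^+\circ f_+\in\mathrm{End}(\Ab^2Y)$ is idempotent, $f^+$ is a closed immersion with image $e\cdot\Ab^2Y\simeq\Ab^2X$, and the summation map induces a \emph{genuine} isomorphism $\Ab^2Y\simeq\Ab^2X\times B$ of abelian varieties over $k$, where $B:=(\Id-e)\cdot\Ab^2Y$.

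Next I would analyze the cup product pairings. For all $\ell$ invertible in $k$, the pull-back $f^*\colon H^3_{\et}(X_{\overline{k}},\Z_\ell(2))/(\torsion)\to H^3_{\et}(Y_{\overline{k}},\Z_\ell(2))/(\torsion)$ is split injective, a splitting being the Gysin push-forward $f_*$ (once more $f_*\circ f^*=\Id$ since $f$ has degree one). The same projection formula, in the form $\deg_Y(f^*a\smile w)=\deg_X(a\smile f_*w)$, shows that $f^*$ is an isometry for the cup product pairings and that its image is orthogonal to $W:=\Ker(f_*)$; as both pairings are perfect by Poincaré duality, one gets an orthogonal decomposition $H^3_{\et}(Y_{\overline{k}},\Z_\ell(2))/(\torsion)=f^*H^3_{\et}(X_{\overline{k}},\Z_\ell(2))\perp W$.

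The key compatibility is that $f^+$ and $f^*$ correspond to one another under (\ref{isoH1H3}). This follows by applying $T_\ell$ to the identity $\lambda^2_Y\circ(\phi^2_Y)^{-1}\circ f^+(\overline{k})=f^*\circ\lambda^2_X\circ(\phi^2_X)^{-1}$, itself deduced from the functoriality relation $\phi^2_Y\circ f^*=f^+(\overline{k})\circ\phi^2_X$ and from the compatibility $\lambda^2_Y\circ f^*=f^*\circ\lambda^2_X$ of Bloch's map with the correspondence $f$ (the left-hand $f^*$ acting on cycles, the right-hand one on cohomology). Thus, under (\ref{isoH1H3}) and the decomposition $\Ab^2Y\simeq\Ab^2X\times B$, the orthogonal splitting above is identified with $H^1_{\et}(\Ab^2Y_{\overline{k}},\Z_\ell)^\vee=H^1_{\et}(\Ab^2X_{\overline{k}},\Z_\ell)^\vee\oplus H^1_{\et}(B_{\overline{k}},\Z_\ell)^\vee$, making the cup product form block-diagonal. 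Setting $\theta_X:=(f^+)^*\theta_Y$ and letting $\theta_B$ be the restriction of $\theta_Y$ to $B$, the off-diagonal component of $\theta_Y$ in $\Hom(\Ab^2X,\widehat B)$ then has vanishing $\ell$-adic realization for some $\ell$ invertible in $k$, hence vanishes by the injectivity of $c_{1,\ell}$; so $\theta_Y$ is the exterior sum of $\theta_X$ and $\theta_B$.

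It remains to conclude. As $f^+$ and the inclusion of $B$ are closed immersions, $\theta_X$ and $\theta_B$ are induced by ample line bundles; and since the isogeny attached to $\theta_Y$ is the product of those attached to $\theta_X$ and $\theta_B$, the relation $\deg(\lambda_{\theta_X})\deg(\lambda_{\theta_B})=\deg(\lambda_{\theta_Y})=1$ forces both factors to be principal polarizations. Therefore $(\Ab^2X,\theta_X)$ and $(B,\theta_B)$ are ppavs, $X$ satisfies Property \ref{assum} with $J^3X=(\Ab^2X,\theta_X)$ (condition (i) holding because $f^*$ is an isometry), and $J^3Y\simeq J^3X\times B$ as ppavs over $k$. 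I expect the main obstacle to be the compatibility of the third paragraph: one must reconcile Murre's functoriality for $\phi^2_X$ with the correspondence-compatibility of Bloch's map $\lambda^2$, tracking carefully the $T_\ell$ and torsion identifications entering (\ref{isoH1H3}). A secondary subtlety, invisible in characteristic zero, is to obtain the principality of $\theta_X$ including its $p$-primary part; the degree computation above settles this without appealing to $\ell$-adic perfectness at $\ell=\mathrm{char}(k)$.
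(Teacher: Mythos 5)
Your proof is correct and follows essentially the same route as the paper: the idempotent $f^+\circ f_+$ coming from $f_*\circ f^*=\Id$ and the surjectivity of $\phi^2_X$, the identification of the resulting decomposition of $\Ab^2Y$ with the cup-product-orthogonal decomposition $f^*H^3\perp\Ker(f_*)$ via (\ref{isoH1H3}), the definition $\theta_X:=(f^+)^*\theta_Y$, and the splitting of $\theta_Y$ as an exterior sum forcing both factors to be principal. Your explicit appeals to the injectivity of $c_{1,\ell}$ (for killing the off-diagonal component) and to the multiplicativity of polarization degrees (for principality, including in characteristic $p$) merely spell out steps the paper leaves implicit.
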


\begin{proof}
Let $\theta_Y\in \NS(\Ab^2Y_{\overline{k}})$ be the class given by Property~\ref{assum} for $Y$. Define $\theta_X:=(f^+)^*\theta_Y\in \NS(\Ab^2X_{\overline{k}})$. We first remark that $\theta_X$ satisfies the condition of Property~\ref{assum}~(i) for $X$.  Indeed, fixing a prime number $\ell$ invertible in $k$, this follows from the commutativity of the diagram (see \S\ref{secijk})
\begin{align*}
\xymatrix
@R=0.5cm
{
H^1_{\et}(\Ab^2X_{\overline{k}},\Z_\ell)^\vee=T_\ell\Ab^2X(\overline{k})\ar@<-3em>[d]^{((f^+)^*)^\vee} & T_\ell\CH^2(X_{\overline{k}})_{\alg}\ar[l]^(.34){\sim}_(.34){T_\ell\phi^2_X}\ar[r]_(.44){\sim}^(.44){T_\ell\lambda^2}\ar[d]^{f^*} &H^3_{\et}(X_{\overline{k}},\Z_\ell(2))/(\tors)\ar[d]^{f^*}  \\
H^1_{\et}(\Ab^2Y_{\overline{k}},\Z_\ell)^\vee=T_\ell\Ab^2Y(\overline{k})& T_\ell\CH^2(Y_{\overline{k}})_{\alg}\ar[l]^(.34){\sim}_(.34){T_\ell\phi^2_Y}\ar[r]_(.44){\sim}^(.44){T_\ell\lambda^2}&H^3_{\et}(Y_{\overline{k}},\Z_\ell(2))/(\tors)
}
\end{align*}
since $f^*:H^3_{\et}(X_{\overline{k}},\Z_\ell(2))\to H^3_{\et}(Y_{\overline{k}},\Z_\ell(2))$ respects the cup product pairing.

Let us turn to Property~\ref{assum}~(ii) for~$X$.
One has $f_+\circ f^+=\Id: \Ab^2X\to \Ab^2X$ since $f_*\circ f^*=\Id:\CH^2(X_{\overline{k}})\to \CH^2(X_{\overline{k}})$ and since $\phi^2_X$ is surjective. A natural isomorphism of abelian varieties
\begin{align}
\label{eq:decompositionab2yx}
\Ab^2Y\simeq f^+(\Ab^2X)\times \Ker(f_+)
\end{align}
results.  It follows from the above diagram and from the same diagram
with $f_*$, $f_+$ replaced by $f^*$, $f^+$ (and vertical arrows reversed)
that applying the rational $\ell$\nobreakdash-adic Tate module functor
to~\eqref{eq:decompositionab2yx} yields, via~\eqref{isoH1H3}, the decomposition
\begin{align}
\label{eq:decompositionh3}
H^3(Y_{\overline{k}},\Q_\ell(2)) \simeq f^*H^3(X_{\overline{k}},\Q_\ell(2)) \times \Ker(f_*)
\end{align}
stemming from the equality $f_* \circ f^*=\Id:H^3(X_{\overline{k}},\Q_\ell(2))\to H^3(X_{\overline{k}},\Q_\ell(2))$.
By the projection formula, the latter decomposition is orthogonal with respect
to the cup product pairing on $H^3(Y_{\overline{k}},\Q_\ell(2))$. Hence the decomposition
of $T_\ell(\Ab^2 Y)$
induced by~\eqref{eq:decompositionab2yx}
is orthogonal with respect
to the
pairing
$c_{1,\ell}(\theta_Y)$.  Equivalently, if $p:\Ab^2 Y \to \Ker(f_+)$ denotes the
projection, then $\theta_Y=(f_+)^*\theta_X + p^*\theta$ for some $\theta \in \NS(\Ker(f_+))$.
As $\theta_Y$ is a principal polarization, so must be $\theta_X$ and $\theta$.
\end{proof}

\begin{lem}
\label{lembl}
Let $X$ be a smooth projective threefold over $k$ with $\CH_{0}(X)_{\Q}$ supported in dimension $1$. Let $f:Y\to X$ be the blow-up of a smooth subscheme~${Z\subset X}$ and let $C$ be the union of the one-dimensional components of $Z$. If~$X$ satisfies Property \ref{assum}, then so does~$Y$
and moreover there is
an isomorphism $J^3Y \simeq J^3X \times J^1C$ of ppavs over~$k$.
\end{lem}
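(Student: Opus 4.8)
The plan is to mimic the proof of Lemma~\ref{lemdom}: first construct an isomorphism of abelian varieties $\Ab^2X\times J^1C\simeq\Ab^2Y$ over $k$, and then check that it carries the product of the principal polarizations to the class prescribed by Property~\ref{assum}. Since $\CH_0(Y)_\Q$ is supported in dimension $1$ by Lemma~\ref{lembir}, and since Property~\ref{assum} only depends on the base change to $\overline k$, I first reduce to the case $Z=C$ a smooth curve. Indeed, as $Z$ is smooth its connected components are disjoint and of pure dimension, so the blow-up factors through the (disjoint) centres: the $2$-dimensional components are smooth divisors, along which the blow-up is an isomorphism, and the $0$-dimensional components contribute nothing, their exceptional loci being forms of $\bP^2$ whose $H^1$ and $H^3$ vanish. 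Write $E=\bP(N_{C/X})\subset Y$ for the exceptional divisor, a $\bP^1$-bundle $g\colon E\to C$ with inclusion $j\colon E\inj Y$, so that $f\circ j=i\circ g$ where $i\colon C\inj X$. Let $\xi=c_1(\sO_E(1))\in\CH^1(E)$. As $N_{E/Y}=\sO_E(-1)$, the self-intersection formula \cite{Fulton} gives $j^*j_*=-\xi\smile(-)$, while the projective bundle formula gives $g_*g^*=0$ and $g_*(\xi\smile g^*(-))=\Id$.

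The cylinder homomorphism $j_*g^*$ carries $\CH^1(C_{\overline k})_{\alg}$ into $\CH^2(Y_{\overline k})_{\alg}$ and is induced by a correspondence defined over $k$; composing with $\phi^2_Y$ yields a regular homomorphism which, by the universal property of the algebraic representative $\phi^1_C\colon\CH^1(C_{\overline k})_{\alg}\xrightarrow{\sim}J^1C(\overline k)$ (the canonical identification $\CH^1(C_{\overline k})_{\alg}=\Pic^0(C_{\overline k})$), factors through a morphism $\gamma\colon J^1C\to\Ab^2Y$ over $k$. Besides $f^+\colon\Ab^2X\to\Ab^2Y$ and $f_+\colon\Ab^2Y\to\Ab^2X$ from \S\ref{AJMurre}, introduce $\delta\colon\Ab^2Y\to J^1C$ induced by $g_*j^*$. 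Using $f\circ j=i\circ g$, the vanishing $\CH^2(C_{\overline k})=0$ (as $C$ is a curve), the identity $g_*g^*=0$, and the computation $g_*j^*j_*g^*=-\Id$, one checks on Chow groups—hence on the algebraic representatives, by surjectivity of the Abel--Jacobi maps, exactly as in Lemma~\ref{lemdom}—the relations $f_+f^+=\Id$, $f_+\gamma=0$, $\delta f^+=0$ and $\delta\gamma=-\Id$. Thus $(f^+,\gamma)$ admits $(f_+,-\delta)$ as a left inverse. As the blow-up formula in $\ell$-adic cohomology yields $\dim\Ab^2Y=\dim\Ab^2X+\dim J^1C$, the dimensions agree, so $(f^+,\gamma)\colon\Ab^2X\times J^1C\xrightarrow{\sim}\Ab^2Y$ is an isomorphism of abelian varieties over $k$.

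It remains to produce the class $\theta_Y$ of Property~\ref{assum}. Through the isomorphism~\eqref{isoH1H3} and the compatibility of $\phi^2$ and $\lambda^2$ with the correspondences $f^*$ and $j_*g^*$ \cite[Proposition~3.5]{Bloch}, the decomposition above corresponds to the blow-up decomposition $H^3(Y_{\overline k},\Z_\ell(2))\simeq f^*H^3(X_{\overline k},\Z_\ell(2))\oplus j_*g^*H^1(C_{\overline k},\Z_\ell(1))$, and I compute the cup-product form of Property~\ref{assum}~(i) on each summand. On the first, the projection formula and $f_*f^*=\Id$ show it equals the form on $H^3(X_{\overline k},\Z_\ell(2))$, namely $c_{1,\ell}(-\theta_X)$. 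The two summands are orthogonal: for $a\in f^*H^3(X_{\overline k})$ and $\beta\in H^1(C_{\overline k},\Z_\ell(1))$ the projection formula and $f\circ j=i\circ g$ give $f^*a\smile j_*g^*\beta=j_*g^*(i^*a\smile\beta)$, which vanishes since $i^*a\in H^3(C_{\overline k})=0$. On the second summand, the projection formula with $j^*j_*=-\xi\smile(-)$ and $g_*(\xi\smile g^*(-))=\Id$ identifies the form with $-\mathrm{cup}_C$, where $\mathrm{cup}_C$ is the intersection form on $H^1(C_{\overline k},\Z_\ell(1))$. As in Lemma~\ref{lemdom}, orthogonality forces $\theta_Y=\mathrm{pr}_X^*\theta_Y^X+\mathrm{pr}_C^*\theta_Y^C$; injectivity of $c_{1,\ell}$ gives $\theta_Y^X=\theta_X$ and $c_{1,\ell}(\theta_Y^C)=\mathrm{cup}_C$. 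Since the canonical principal polarization $\theta_C$ of $J^1C$ (\S\ref{ppavk}) satisfies $c_{1,\ell}(\theta_C)=\mathrm{cup}_C$, we obtain $\theta_Y^C=\theta_C$, so $\theta_Y=\mathrm{pr}_X^*\theta_X+\mathrm{pr}_C^*\theta_C$ verifies Property~\ref{assum}~(i); being the orthogonal sum of the principal polarizations $\theta_X$ and $\theta_C$, it verifies~(ii). As all maps are defined over $k$, the isomorphism $J^3Y\simeq J^3X\times J^1C$ is one of ppavs over $k$.

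The main obstacle is this third step: matching the cup-product form on $H^3(Y_{\overline k},\Z_\ell(2))$ with the Chern class of a product polarization. Its heart is the intersection-theoretic computation on the exceptional divisor, where the Tate twists and—crucially—the signs must be tracked with care. The sign $-1$ produced by $N_{E/Y}=\sO_E(-1)$ is exactly what converts the weight-$1$ polarization form of $J^1C$ into the pairing induced from the weight-$3$ cohomology of $Y$, so that—once combined with the $-\theta$ convention of Property~\ref{assum}~(i)—the $J^1C$-factor of $\theta_Y$ is the canonical principal polarization $\theta_C$ and not its (anti-ample) opposite; had this sign come out wrong, $\theta_Y$ would fail to be a polarization. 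The orthogonality of the two summands, reducing to $H^3(C_{\overline k})=0$, is what guarantees that $\theta_Y$ is a genuine product polarization.
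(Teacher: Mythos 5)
Your proof is correct and follows essentially the same route as the paper's: the cylinder correspondence $j_*g^*$ plus the universal property of the algebraic representative yields the map $J^1C\to\Ab^2Y$, the blow-up decomposition of $H^3$ gives the orthogonal splitting, and the sign coming from $N_{E/Y}=\sO_E(-1)$ (the paper's identity (\ref{signcp})) is exactly what makes the $C$-factor of the polarization equal to $\theta_C$ rather than its opposite. The only differences are cosmetic: the paper treats all components of $Z$ at once via the blow-up formula for Chow groups \cite[Proposition~6.7~(e)]{Fulton} and exhibits $(f_*,-z_*)$ as a two-sided inverse of $(f^*,z^*)$, whereas you first reduce to the case where $Z$ is a curve (glossing slightly the routine check that point blow-ups change neither $\Ab^2$ nor the cup form on $H^3$) and then combine a one-sided inverse with a dimension count.
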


\begin{proof}
We first construct an isomorphism of abelian varieties $\Ab^2Y\simeq\Ab^2X\times J^1C$.

Let $i:E\inj Y$ be the inverse image of $C$ in $Y$.
Consider the correspondence $z:=(i,f|_E)_*E\in \CH^2(Y\times C)$.
The existence of the Poincar\'e divisor on $C_{\overline{k}}\times J^1C_{\overline{k}}$ inducing the natural bijection $\phi^1_C:\CH^1(C_{\overline{k}})_{\alg}\to J^1C(\overline{k})$ and the fact that $\phi^2_{Y}$ is regular in the sense of \cite[Definition 1.6.1]{Murre}
show the existence of a morphism $z^+:J^1C_{\overline{k}}\to \Ab^2Y_{\overline{k}}$ such that $z^+(\overline{k})\circ \phi^1_C=\phi^2_{Y}\circ z^*:\CH^1(C_{\overline{k}})_{\alg}\to \Ab^2Y(\overline{k})$. Since $\phi^1_C$, $\phi^2_{Y}$ and $z^*$ are $\Gamma_k$-equivariant, so is $z^+(\overline{k})$, showing that $z^+$ descends to a morphism $z^+:J^1C\to \Ab^2Y$ defined over $k$. A similar argument shows the existence of a morphism $z_+:\Ab^2Y\to J^1C$ of abelian varieties over $k$ such that $\phi^1_C\circ z_*= z_+(\overline{k})\circ\phi^2_{Y}:\CH^2(Y_{\overline{k}})_{\alg}\to J^1C(\overline{k})$.

Computing the Chow groups of a blow-up 
 \cite[Proposition~6.7~(e)]{Fulton} yields a canonically split short exact sequence
\begin{equation}
\label{blowupformula}
0\to \CH_1(Z_{\overline{k}})\to \CH_1(X_{\overline{k}})\times \CH_1(f^{-1}(Z)_{\overline{k}})\to \CH_1(Y_{\overline{k}})\to 0.
\end{equation}
As $f^{-1}(Z)$ is a projective bundle of relative dimension $\geq 1$ over $Z$, there is a canonical isomorphism
$\CH_1(f^{-1}(Z)_{\overline{k}})\simeq \CH_0(Z_{\overline{k}})\times \CH_1(Z_{\overline{k}})$ \cite[Theorem~3.3]{Fulton}. Combining it with (\ref{blowupformula}), we get an isomorphism $\CH_1(X_{\overline{k}})\times\CH_0(Z_{\overline{k}})\to \CH_1(Y_{\overline{k}})$. Identifying the arrows and restricting to algebraically trivial cycles
shows that $(f^*,z^*):\CH^2(X_{\overline{k}})_{\alg}\times \CH^1(C_{\overline{k}})_{\alg}\to \CH^2(Y_{\overline{k}})_{\alg}$ is an isomorphism with inverse
$(f_*,-z_*): \CH^2(Y_{\overline{k}})_{\alg}\to\CH^2(X_{\overline{k}})_{\alg}\times \CH^1(C_{\overline{k}})_{\alg}$. We deduce at once that $(f^+,z^+): \Ab^2X\times J^1C\to \Ab^2Y$ and $(f_+,-z_+):\Ab^2Y\to\Ab^2X\times J^1C$ are inverse isomorphisms of abelian varieties over $k$.

Let $\theta_C\in\NS(J^1C_{\overline{k}})$ be the canonical principal polarization. For all primes $\ell$ invertible in $k$, the class $c_{1,\ell}(\theta_C)\in H^2_{\et}(J^1C_{\overline{k}},\Z_\ell(1))=\Big(\bigwedge^2 H^1_{\et}(J^1C_{\overline{k}},\Z_{\ell})\Big)(1)$
corresponds, via the isomorphism $T_{\ell}(\lambda^1\circ(\phi_C^1)^{-1}):H^1_{\et}(J^1C_{\overline{k}},\Z_\ell)^\vee\to  H^{1}_{\et}(C_{\overline{k}},\Z_{\ell}(1))$, to the cup product map 
$\bigwedge^2 H^1_{\et}(C_{\overline{k}},\Z_{\ell}(1))\xrightarrow{\smile} H^2_{\et}(C_{\overline{k}},\Z_{\ell}(2))\xrightarrow{\deg}\Z_{\ell}(1)$.
To verify this classical fact, already used by Murre in \cite[\S 3.6]{Murrecubic}, one may to reduce to $k$ of characteristic~$0$ by lifting $C$ to such a field, then to $k=\C$ by the Lefschetz principle, where it follows from a transcendental computation (for which see \cite[\S\S 11.1--11.2]{BL}) after comparing $\ell$-adic and Betti cohomology.

Let $\theta_X\in \NS(\Ab^2X_{\overline{k}})$ be the class given by Property~\ref{assum} for $X$, and define $\theta_Y:=(f_+,-z_+)^*(\theta_X,\theta_C)$. We only need to show that $\theta_Y$ satisfies Property \ref{assum}~(i).
This follows from the above property of $\theta_C$ and from the commutativity of the two diagrams
\begin{align*}
\xymatrix
@R=0.5cm
@C=2cm
{
H^1_{\et}(\Ab^2X_{\overline{k}},\Z_{\ell})^\vee\times H^1_{\et}(J^1C_{\overline{k}},\Z_{\ell})^{\vee}\ar[d]_{}^{((f^+)^*,(z^+)^*)^\vee} & T_\ell\CH^2(X_{\overline{k}})_{\alg}\times T_\ell\CH^1(C_{\overline{k}})_{\alg}\ar[l]^(.46){\sim}_(.46){(T_\ell\phi^2_X,T_\ell\phi^1_C)}\ar[d]_{\wr}^{(f^{*}\hspace{-.2em},\hspace{.1em}z^*)}  \\
H^1_{\et}(\Ab^2Y_{\overline{k}},\Z_\ell)^\vee& T_\ell\CH^2(Y_{\overline{k}})_{\alg}\ar[l]^(.46){\sim}_(.46){T_\ell\phi^2_Y}
}
\end{align*}
\begin{align*}
\xymatrix
@R=0.5cm
@C=1.3cm
{T_\ell\CH^2(X_{\overline{k}})_{\alg}\times T_\ell\CH^1(C_{\overline{k}})_{\alg}\ar[r]_(.46){\sim}^(.46){(T_\ell\lambda^2,T_\ell\lambda^1)}\ar[d]_{\wr}^{(f^{*}\hspace{-.2em},\hspace{.1em}z^*)} &H^3_{\et}(X_{\overline{k}},\Z_\ell(2))/(\tors)\times H^1_{\et}(C_{\overline{k}},\Z_\ell(1)) \ar[d]_{}^{(f^{*}\hspace{-.2em},\hspace{.1em}z^*)}  \\ T_\ell\CH^2(Y_{\overline{k}})_{\alg}\ar[r]_(.46){\sim}^(.46){T_\ell\lambda^2}&H^3_{\et}(Y_{\overline{k}},\Z_\ell(2))/(\tors),
}
\end{align*}
since $f^*:H^3_{\et}(X_{\overline{k}},\Z_\ell(2))\to H^3_{\et}(Y_{\overline{k}},\Z_\ell(2))$ respects the cup product pairing and since
$z^*: H^1_{\et}(C_{\overline{k}},\Z_\ell(1))\to H^3_{\et}(Y_{\overline{k}},\Z_\ell(2))$ reverses it in the sense that \begin{equation}
\label{signcp}
\deg(z^*x\smile z^*y)=-\deg(x\smile y)\in\Z_\ell(1)
\end{equation}
for all $x,y\in H^1_{\et}(C_{\overline{k}},\Z_\ell(1))$. Identity (\ref{signcp}) was proved by Clemens and Griffiths in \cite[(3.12)]{CG} when $k=\C$. To check it, set $x':=(f|_E)^*x$ and $y':=(f|_E)^*y$,
so that $x',y' \in H^1_{\et}(E_{\overline{k}},\Z_\ell(1))$, and compute
$$ i_*x'\smile i_*y'=i_*(i^*i_*x'\smile y')=i_*(x'\smile y'\smile i^*i_*1)=i_*(x'\smile y'\smile c_1(\sO_E(-1))).$$
Since $z^*x\smile z^*y=i_*x'\smile i_*y'$, the projection formula yields
\phantom\qedhere
$$
\deg(z^*x\smile z^*y)=\deg(x'\smile y'\smile c_1(\sO_E(-1)))=-\deg(x \smile y).\eqno\qed$$
\end{proof}

To go further, we need a resolution of indeterminacies result going back to Abhyankar \cite{Abhyankar}, which we will use exactly as Murre in \cite[\S 3]{Murrecubic} (see also \cite{LP} for an application in a similar vein).

\begin{prop}
\label{resindet}
Let $f:Y\dashrightarrow X$ be a rational map of varieties over $k$ with $Y$ smooth quasi-projective of dimension $3$ and $X$ projective. Then there exists a composition $g:Z\to Y$ of blow-ups with smooth centers and a morphism $h:Z\to X$ such that $h=f\circ g$.
\end{prop}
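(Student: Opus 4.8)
The plan is to reduce the statement to the principalization of a sheaf of ideals on~$Y$ supported in dimension~$\leq 1$, and to carry out this principalization by blow-ups with smooth centers, following the method of Abhyankar \cite{Abhyankar} employed by Murre in \cite[\S 3]{Murrecubic}. First I would reduce to the case $X=\bP^N_k$. Choosing a closed immersion $X\inj\bP^N_k$ and viewing $f$ as a rational map $Y\dashrightarrow\bP^N_k$, it suffices to resolve the latter: if $h:Z\to\bP^N_k$ is a morphism coinciding with $(X\inj\bP^N_k)\circ f\circ g$ on a dense open subset of~$Z$, then, as~$X$ is closed in $\bP^N_k$ and~$Z$ is reduced, the closed subscheme $h^{-1}(X)$ equals~$Z$, so that~$h$ factors through~$X$.

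Next I would translate the problem into principalization. A rational map $Y\dashrightarrow\bP^N_k$ is given by a line bundle~$\sL$ on~$Y$ together with sections $s_0,\dots,s_N$ having no common divisorial zero, and resolving it amounts to rendering the base ideal $\sI\subset\sO_Y$ (characterized by $\sI\cdot\sL=(s_0,\dots,s_N)$) invertible: on any $g:Z\to Y$ with $\sI\cdot\sO_Z$ invertible, the~$s_i$ generate an invertible subsheaf of $g^*\sL$ and hence define a morphism $Z\to\bP^N_k$. Since~$Y$ is smooth and the~$s_i$ have no common divisor, the cosupport $V(\sI)$ has codimension $\geq 2$, hence dimension $\leq 1$. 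I would then principalize~$\sI$ by blow-ups with smooth centers, arguing by induction on the non-invertibility locus. At the generic point~$\eta$ of a one-dimensional component $C\subset V(\sI)$, the local ring $\sO_{Y,\eta}$ is a \emph{two-dimensional} regular local ring, and principalizing~$\sI$ there is an instance of embedded resolution of singularities for surfaces, which holds over any field. The closed points of $\Spec\sO_{Y,\eta}$ blown up in this process are generic points of curves in~$Y$; after blowing up their finitely many singular points to make them smooth (possible over any field, since this is a question about curves), their closures define smooth centers in a neighborhood of~$\eta$, whose blow-ups render~$\sI$ invertible near~$\eta$. Carrying this out for each of the finitely many one-dimensional components removes them from the non-invertibility locus, which then becomes a finite set of closed points, and blowing these up reduces the dimension of the relevant locus.

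The main obstacle is \emph{termination}: a blow-up with smooth center may create new one-dimensional components of the non-invertibility locus inside the exceptional divisor, so the naive induction does not obviously halt. To close the argument one must exhibit an invariant of~$\sI$—built from its order along the non-invertibility locus—that strictly decreases under the operations above. Controlling this invariant is exactly what the embedded resolution of surfaces supplies at the generic points of the successive curves, and it is the reason the whole argument goes through over an arbitrary field, bypassing the difficulties of resolution of singularities for threefolds in small positive characteristic; this reduction of the three-dimensional principalization to the two-dimensional case is the point that Piltant explained to us.
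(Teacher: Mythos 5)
Your two reduction steps are sound, and they constitute a genuinely different (equally valid) route to the key point than the paper's: you embed $X$ in $\bP^N_k$ and pass to the base ideal $\sI\subset\sO_Y$ of the linear system defining $Y\dashrightarrow\bP^N_k$, whereas the paper takes the closure $\Gamma\subset Y\times_k X$ of the graph of $f$ and invokes \cite[II Theorem 7.17]{Hartshorne} to realize the projective birational morphism $\Gamma\to Y$ as the blow-up of a coherent sheaf of ideals $\sI\subset\sO_Y$. Either way, the statement is reduced to the same assertion: an ideal sheaf on a smooth quasi-projective threefold over an arbitrary field can be made invertible by a composition of blow-ups with smooth centers.

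That principalization assertion, however, is exactly where your proposal stops being a proof. The paper does not prove it; it cites it as a known theorem, namely \cite[(9.1.4)]{Abhyankar} (see \cite[Proposition~4.2]{CPI} or \cite[Theorem 5.9]{CJS} for modern references), after which the universal property of the blow-up immediately yields the morphism $h:Z\to X$. You instead attempt to sketch a proof of it from embedded resolution of surfaces, and the sketch fails at the point you yourself identify. Your intermediate claim that, after treating the finitely many one-dimensional components, the non-invertibility locus ``becomes a finite set of closed points, and blowing these up reduces the dimension of the relevant locus'' is unjustified, and is in fact contradicted by your own next paragraph: both the curve blow-ups (away from the generic points $\eta$, where the two-dimensional local argument gives no information) and the point blow-ups can create new one-dimensional components inside the exceptional divisors. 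You then state that one ``must exhibit an invariant \dots\ that strictly decreases'' and assert that embedded resolution of surfaces ``supplies'' its control, but no invariant is defined and no decrease is proved; the behaviour over the closed points is precisely what the local rings $\sO_{Y,\eta}$ do not see. Deducing three-dimensional principalization from two-dimensional embedded resolution is the entire content of Abhyankar's theorem --- it occupies a chapter of \cite{Abhyankar} and full papers in the modern literature --- and cannot be dispatched in a paragraph. Either cite the principalization theorem as a black box, in which case your argument becomes complete and is essentially the paper's, or supply the actual induction; as written, the core of the proof is missing.
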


\begin{proof} 
Let $\Gamma\subset Y\times_k X$ be the closure of the graph of $f$. Since the projection $\Gamma\to Y$ is projective and birational, it is the blow\nobreakdash-up of some coherent sheaf of ideals $\sI\subset\sO_{Y}$ \cite[II Theorem 7.17]{Hartshorne}. There exists a morphism $g:Z\to Y$ that is a composition of blow-ups with smooth centers such that the sheaf of ideals $\sI\sO_Z\subset\sO_Z$ is invertible \cite[(9.1.4)]{Abhyankar} (see \cite[Proposition~4.2]{CPI} or \cite[Theorem 5.9]{CJS} for modern references). By the universal property of a blow-up, $Z$ dominates $\Gamma$, and we let $h:Z\to X$ be the induced morphism.
\end{proof}

We may finally conclude the proof of Theorem \ref{thbir}.

\begin{proof}[Proof of Theorem \ref{thbir}]
Let $f:Y\dashrightarrow X$ be a birational map. By Proposition~\ref{resindet}, there exists a composition $g:Z\to Y$ of blow-ups with smooth centers and a morphism $h:Z\to X$ such that $h=f\circ g$.
By Lemmas \ref{lemdom} and \ref{lembl}, the varieties $Y$, $Z$, $X$ all satisfy Property~\ref{assum},
and we obtain an isomorphism
\begin{align}
\label{eq:iso1}
J^3X \times B \simeq J^3Y \times J^1C
\end{align}
of ppavs over~$k$ for some ppav~$B$ over~$k$ and some smooth projective curve~$C$ over~$k$.
The same reasoning applied to~$f^{-1}$ produces a ppav $B'$ over~$k$, a smooth projective curve~$C'$ over~$k$ and an isomorphism
\begin{align}
\label{eq:iso2}
J^3X \times J^1C' \simeq J^3Y \times B'
\end{align}
of ppavs over~$k$.
By the uniqueness of the decomposition of a ppav as a product of indecomposable factors,
and as the indecomposable factors of~$J^1C$ and~$J^1C'$ are Jacobians of smooth projective connected
curves over $k$ (see \S\ref{ppavk}), we deduce
from the isomorphism
$J^3Y \times B \times B' \simeq J^3Y\times J^1C \times J^1C'$
obtained by combining~\eqref{eq:iso1} and~\eqref{eq:iso2}
that~$B$ and~$B'$ are themselves Jacobians of smooth
projective curves over~$k$.  Thus~\eqref{eq:iso1} is the desired isomorphism.
\end{proof}

\section{Counterexamples to the L\"uroth problem}

We now explain how to use the intermediate Jacobians studied in Section \ref{secij} to construct examples of varieties over $k$ that are $\overline{k}$-rational but not $k$-rational.

\subsection{Twists}
\label{twists}

Our examples will be constructed as twists of $k$-rational varieties.
Let $X$ be a quasi-projective variety over~$k$. The \textit{twist} ${}_cX$ of $X$ by the $1$-cocycle $c=(c_{\gamma})_{\gamma\in\Gamma_k}\in Z^1(k,\Aut(X_{\overline{k}}))$ (see \cite[I 5.1 and III 1.3]{CohoGal}) is a variety over $k$ with an isomorphism $i: X_{\overline{k}}\simeq({}_cX)_{\overline{k}}$ such that $\gamma(i(x))=i(c_{\gamma}\cdot\gamma(x))$ for all $x\in X(\overline{k})$ and $\gamma\in\Gamma_k$. The twists of $X$ are exactly the varieties over $k$ that are $\overline{k}$-isomorphic to $X_{\overline{k}}$, and two twists ${}_cX$ and ${}_{c'}X$ of $X$ are isomorphic as varieties over $k$ if and only if $c=c'$ are cohomologous
\cite[III 1.3, Proposition 5]{CohoGal}. We denote by $[{}_cX]\in H^1(k,  \Aut(X_{\overline{k}}))$ the cohomology  class of $c$.

Similarly, if $(A,\theta)$ is a ppav over $k$ and $d\in Z^1(k,\Aut_{\ppav}(A_{\overline{k}},\theta))$, the twist $_{d}(A,\theta)$ of $(A,\theta)$ by $d$ is a ppav over $k$ such that $(A_{\overline{k}},\theta)\simeq ({}_{d}(A,\theta))_{\overline{k}}$, two cocycles give rise to isomorphic ppavs over $k$ if and only if they are cohomologous, and we set $[_{d}(A,\theta)]$ to be the image of $d$ in $H^1(k,\Aut_{\ppav}(A_{\overline{k}},\theta))$.

\begin{prop}
\label{twisttwist}
Let $X$ be a smooth projective threefold over~$k$. Assume that $\CH_{0}(X)_{\Q}$ is supported in dimension $1$ and that $X$ satisfies Property \ref{assum}. Let $\chi : \Aut(X_{\overline{k}})\to\Aut_{\ppav}(J^3X_{\overline{k}})$ be the $\Gamma_k$\nobreakdash-equivariant map $g\mapsto g^+$ (see \S\ref{AJMurre}). Then for all $c\in Z^1(k,\Aut(X_{\overline{k}}))$, one has $J^3({}_cX)\simeq{}_{\chi(c)}(J^3X)$ as ppavs over~$k$.
\end{prop}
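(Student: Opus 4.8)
The plan is to unwind the definition of a twist on both sides and check that the natural identification $i\colon X_{\overline k}\xrightarrow{\sim}({}_cX)_{\overline k}$ induces an isomorphism of intermediate Jacobians that intertwines the two Galois actions correctly. First I would recall that the variety ${}_cX$ is, by definition, $\overline k$-isomorphic to $X_{\overline k}$ via $i$, so that $\CH_0({}_cX)_\Q$ is also supported in dimension $1$ (being a $\overline k$-birational invariant, by Lemma~\ref{lembir}) and Property~\ref{assum} holds for ${}_cX$ as well (since it depends only on $({}_cX)_{\overline k}\simeq X_{\overline k}$). Thus $J^3({}_cX)$ is defined, and as an abelian variety over $\overline k$ the isomorphism $i$ induces $i^+\colon \Ab^2({}_cX)_{\overline k}\xrightarrow{\sim}\Ab^2X_{\overline k}$ (using the functoriality of $g\mapsto g^+$ from \S\ref{AJMurre}); this is an isomorphism of ppavs over $\overline k$ because the polarization $\theta$ is characterized intrinsically by Property~\ref{assum}~(i), which $i^+$ respects by its compatibility with $\phi^2$, $\lambda^2$ and the cup product.

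The heart of the argument is to compare Galois actions. For $\gamma\in\Gamma_k$, the defining relation $\gamma(i(x))=i(c_\gamma\cdot\gamma(x))$ says precisely that, as a $\overline k$-point-theoretic identity, $\gamma\circ i=i\circ c_\gamma\circ\gamma$ where on each side $\gamma$ denotes the semilinear action on $X_{\overline k}$ (resp.\ $({}_cX)_{\overline k}$) coming from the $k$-structure. Applying the contravariant-then-covariant recipe $g\mapsto g^+$ and using its $\Gamma_k$-equivariance (established in \S\ref{AJMurre}), one transports this identity to $\Ab^2$: the semilinear automorphism of $\Ab^2X_{\overline k}$ induced by the $k$-structure of ${}_cX$ (pulled across $i^+$) differs from the one induced by the $k$-structure of $X$ exactly by the composite with $\chi(c_\gamma)=c_\gamma^+$. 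In other words, $i^+$ identifies $J^3({}_cX)_{\overline k}$, equipped with its descent datum, with $J^3X_{\overline k}$ equipped with the descent datum twisted by the cocycle $(\chi(c_\gamma))_\gamma=\chi(c)$. By the characterization of twists of ppavs recalled in \S\ref{twists}, this is exactly the statement $J^3({}_cX)\simeq{}_{\chi(c)}(J^3X)$ over $k$.

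The main obstacle I anticipate is purely bookkeeping rather than conceptual: one must be scrupulous about variance. The assignment $g\mapsto g^+$ is contravariant on correspondences/morphisms over $\overline k$ (it is built from $g^*$ via the universal property of $\phi^2$), so when composing with the semilinear Galois maps one has to track whether the induced map on $\Ab^2$ goes the same or the opposite way, and confirm that the twisting cocycle that emerges is $\gamma\mapsto c_\gamma^+$ rather than, say, its inverse or its $\gamma$-conjugate. The clean way to avoid sign/direction errors is to phrase everything in the language of descent data: a $k$-form of $X_{\overline k}$ is the same as a semilinear $\Gamma_k$-action on $X_{\overline k}$, the cocycle $c$ records the difference between the two forms' actions, and the functor $g\mapsto g^+$ (being $\Gamma_k$-equivariant and compatible with composition) carries the difference of actions to the difference $\chi(c)$ of the induced actions on $\Ab^2$. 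Once this is set up, the verification that $i^+$ is a morphism of ppavs—which is the only place the polarization enters—follows formally from the uniqueness and intrinsic nature of $\theta$ in Property~\ref{assum}, so no further geometric input is needed.
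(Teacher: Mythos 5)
Your proposal is correct and follows essentially the same route as the paper: both arguments transport the twisting cocycle through the $\Gamma_k$-equivariant assignment $g\mapsto g^+$ (the paper by checking that the equivariance defects of the composite $\CH^2(({}_cX)_{\overline{k}})_{\alg}\simeq\CH^2(X_{\overline{k}})_{\alg}\xrightarrow{\phi^2_X}J^3X(\overline{k})\simeq{}_{\chi(c)}(J^3X)(\overline{k})$ cancel exactly, you by matching descent data across $i^+$), with the polarization handled in both cases by its intrinsic characterization in Property~\ref{assum}. Your preliminary remark that ${}_cX$ inherits the hypotheses, and your caution about the contravariance of $g\mapsto g^+$, are points the paper's proof leaves implicit.
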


\begin{proof}
The map $\phi^2_X$ considered in (\ref{AJk}) being $\Gamma_k$\nobreakdash-equivariant by hypothesis, and $\Aut(X_{\overline{k}})$\nobreakdash-equivariant by the functoriality of $J^3X$ (see \S\ref{AJMurre}), the composition
$$\CH^2(({}_cX)_{\overline{k}})_{\alg}\simeq\CH^2(X_{\overline{k}})_{\alg}\xrightarrow{\phi^2_X} J^3X(\overline{k})\simeq {}_{\chi(c)}(J^3X)(\overline{k}),$$
where the first and third arrows are respectively induced by the natural isomorphisms $X_{\overline{k}}\simeq({}_cX)_{\overline{k}}$ and $J^3X_{\overline{k}}\simeq({}_{\chi(c)}(J^3X))_{\overline{k}}$, is $\Gamma_k$-equivariant as the defects of $\Gamma_k$-equivariance of the first and third arrow compensate each other exactly. This yields an isomorphism $J^3({}_cX)\simeq{}_{\chi(c)}J^3X$ of ppavs over $k$ by the definition (\ref{AJk}) of $J^3({}_cX)$.
\end{proof}

\subsection{Quadratic twists}
\label{qtwists}

 If $(A,\theta)$ is a ppav over $k$, sending~$1$ to the automorphism $-\Id$ of~$(A_{\overline{k}},\theta)$ induces a $\Gamma_k$-equivariant morphism 
$\varphi:\Z/2\Z\to\Aut_{\ppav}(A_{\overline{k}},\theta)$, hence a map $\varphi: H^1(k,\Z/2\Z)= Z^1(k,\Z/2\Z)\to Z^1(k,\Aut_{\ppav}(A_{\overline{k}},\theta))$. 
For all $a\in H^1(k,\Z/2\Z)$, 
the ppav ${}_{\varphi(a)}(A,\theta)$ over $k$ is the \textit{quadratic twist} of $(A,\theta)$ by~$a$.

Our main result regarding the non-triviality of the Clemens--Griffiths invariant over $k$ is the following consequence of the Torelli theorem.

\begin{prop}
\label{Torelli}
Let $C$ be a smooth projective geometrically connected curve over~$k$, and let $a\in H^1(k,\Z/2\Z)$. Then the following conditions are equivalent:
\begin{enumerate}[(i)]
\item There exists a smooth projective curve $C'$ over $k$ such that ${}_{\varphi(a)}(J^1C)$ and $J^1C'$ are isomorphic as ppavs over $k$.
\item The class $a$ is trivial, or $C_{\overline{k}}$ has genus $0$ or $1$, or $C_{\overline{k}}$ is hyperelliptic.
\end{enumerate}
\end{prop}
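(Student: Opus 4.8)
The plan is to use the Torelli theorem to translate the statement about isomorphisms of principally polarized abelian varieties into a concrete statement about isomorphisms of curves, and then to analyze how the automorphism $-\Id$ (which defines the quadratic twist) interacts with the automorphisms of the Jacobian coming from automorphisms of the curve. First I would dispose of the trivial directions. If $a$ is trivial, then ${}_{\varphi(a)}(J^1C)\simeq J^1C$ and we take $C'=C$. If $C_{\overline k}$ has genus $0$ then $J^1C=0$ and the condition is vacuous; if $C_{\overline k}$ has genus $1$ then $-\Id$ is itself translation by a $2$-torsion point composed with an automorphism fixing the origin, or more directly $J^1C$ is an elliptic curve and its quadratic twist is again a Jacobian of a genus-one curve (a twist of $C$), so (ii)$\Rightarrow$(i) in these cases. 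The substantive content is the genus $\geq 2$ case and the equivalence with the hyperelliptic condition.

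The key structural input is the following refinement of Torelli. For a smooth projective geometrically connected curve $C$ of genus $g\geq 1$ over $k$, the automorphism group $\Aut_{\ppav}(J^1C_{\overline k})$ fits in an exact sequence relating $\Aut(C_{\overline k})$ and the central element $-\Id$. Concretely, the Torelli theorem (in the form valid over any field, see the automorphism-group statements in \cite{Milne} or \cite{Serre}) gives that every principal-polarization-preserving automorphism of $J^1C_{\overline k}$ is, up to the sign $\pm\Id$, induced by an automorphism of $C_{\overline k}$; and $-\Id$ is itself induced by a curve automorphism precisely when $C_{\overline k}$ is hyperelliptic, namely by the hyperelliptic involution (this uses that the hyperelliptic involution acts as $-\Id$ on $J^1$, whereas for non-hyperelliptic $C$ the map $\Aut(C_{\overline k})\to\Aut_{\ppav}(J^1C_{\overline k})$ misses $-\Id$). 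I would set up this dichotomy carefully as a $\Gamma_k$-equivariant statement, since the descent data is what matters.

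With this in hand, the proof of (i)$\Rightarrow$(ii) in the genus $\geq 2$ case runs as follows. Suppose ${}_{\varphi(a)}(J^1C)\simeq J^1C'$ for some smooth projective curve $C'$ over $k$; after passing to indecomposable factors via the discussion in \S\ref{ppavk} one reduces to $C'$ geometrically connected of genus $g$, so $J^1C'_{\overline k}\simeq J^1C_{\overline k}$ as ppavs, and by Torelli $C'_{\overline k}\simeq C_{\overline k}$. The twisting cocycle $\varphi(a)$ is valued in $\{\pm\Id\}$, and the isomorphism of $k$-ppavs forces this cocycle to be cohomologous, inside $\Aut_{\ppav}(J^1C_{\overline k})$, to the descent-comparison cocycle between the two curve structures, which lies in the image of $Z^1(k,\Aut(C_{\overline k}))$. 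If $C_{\overline k}$ is \emph{not} hyperelliptic, then by the dichotomy above $-\Id\notin\Ima\big(\Aut(C_{\overline k})\to\Aut_{\ppav}(J^1C_{\overline k})\big)$, and the two subgroups $\{\pm\Id\}$ and $\Ima(\Aut(C_{\overline k}))$ intersect only in the identity and commute ($-\Id$ is central); a short cocycle computation then shows $\varphi(a)$ must already be a coboundary in $\{\pm\Id\}\subset\Aut_{\ppav}$, i.e.\ $a$ is trivial. Conversely, for (ii)$\Rightarrow$(i) in the hyperelliptic case, I would exhibit $C'$ explicitly as a quadratic twist of $C$: since the hyperelliptic involution $\iota$ acts as $-\Id$ on $J^1C_{\overline k}$, twisting $C$ by the cocycle $a\cup\iota$ produces a curve $C'$ over $k$ whose Jacobian is exactly ${}_{\varphi(a)}(J^1C)$, by the functoriality of Proposition~\ref{twisttwist} applied to $C$ (or a direct construction of the twisted Jacobian). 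The main obstacle I anticipate is the precise cohomological bookkeeping in the non-hyperelliptic case: one must argue that the \emph{central} element $-\Id$ cannot be absorbed into a twist coming from genuine curve automorphisms, which requires controlling the intersection and commutation of $\{\pm\Id\}$ with $\Ima(\Aut(C_{\overline k}))$ inside $\Aut_{\ppav}(J^1C_{\overline k})$ at the level of $1$-cocycles, not merely of groups, and keeping the $\Gamma_k$-action straight throughout.
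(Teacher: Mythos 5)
Your proposal is correct and takes essentially the same route as the paper: both dispose of the trivial cases, invoke the refined Torelli theorem (Serre's description of $\Aut_{\ppav}(J^1C_{\overline{k}})$ as $\Aut(C_{\overline{k}})$ in the hyperelliptic case and $\Aut(C_{\overline{k}})\times\Z/2\Z$ with $-\Id$ generating the second factor otherwise), and then conclude via twisting cocycles --- the paper phrasing your ``short cocycle computation'' as a bijection $H^1(k,\Aut(C_{\overline{k}}))\times H^1(k,\Z/2\Z)\xrightarrow{\sim}H^1(k,\Aut_{\ppav}(J^1C_{\overline{k}}))$, and your hyperelliptic twist by the involution being exactly the paper's preimage of $[{}_{\varphi(a)}(J^1C)]$ under the corresponding bijection.
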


\begin{proof}
We first prove that (ii)$\Rightarrow$(i).
If $a$ is trivial, then ${}_{\varphi(a)}(J^1C)\simeq J^1C$. If $C$ has genus~$0$, then ${}_{\varphi(a)}(J^1C)=0$ and if $C$ has genus $1$, then ${}_{\varphi(a)}(J^1C)\simeq J^1({}_{\varphi(a)}(J^1C))$.
  Suppose now that $C_{\overline{k}}$ is hyperelliptic of genus $\geq 2$. By \cite[Appendice, Th\'eor\`eme~4]{Serre}, one has a $\Gamma_k$-equivariant group isomorphism
$\Aut(C_{\overline{k}})\xrightarrow{\sim}\Aut_{\ppav}(J^1C_{\overline{k}})$ inducing a bijection
$H^1(k,\Aut(C_{\overline{k}}))\xrightarrow{\sim}H^1(k,\Aut_{\ppav}(J^1C_{\overline{k}}))$. The inverse image of the class $[{}_{\varphi(a)}(J^1C)]$ by this bijection corresponds to a twist $C'$ of $C$ (called a hyperelliptic twist) with the property that $J^1C'\simeq {}_{\varphi(a)}(J^1C)$ as ppavs over $k$. 

Assume now that (i) holds but that $C_{\overline{k}}$ has genus $\geq 2$ and is not hyperelliptic. Since $J^1C_{\overline{k}}$ and $J^1C'_{\overline{k}}$ are isomorphic ppavs over $\overline{k}$, the Torelli theorem shows that $C'_{\overline{k}}\simeq C_{\overline{k}}$. By \cite[Appendice, Th\'eor\`eme~4]{Serre}, one has a $\Gamma_k$-equivariant group isomorphism
$\Aut(C_{\overline{k}})\times \Z/2\Z\xrightarrow{\sim}\Aut_{\ppav}(J^1C_{\overline{k}})$ associating with an automorphism of~$C_{\overline{k}}$ the induced automorphism of $J^1C_{\overline{k}}$ and with $1\in\Z/2\Z$ the automorphism $-\Id$.
This yields a bijection
$H^1(k,\Aut(C_{\overline{k}}))\times H^1(k,\Z/2\Z)\xrightarrow{\sim}H^1(k,\Aut_{\ppav}(J^1C_{\overline{k}}))$. The images $[{}_{\varphi(a)}(J^1C)]$ and $[J^1C']$ of $([C],a)$ and $([C'],0)$ by this bijection coincide because ${}_{\varphi(a)}(J^1C)\simeq J^1C'$ as ppavs over $k$, showing that $a$ is trivial.
\end{proof}

\subsection{Conic bundles}
\label{secconic}

In this paragraph, we do not assume that $k$ is perfect.

Concrete varieties to which we may apply the above results are conic bundles.
Let $S$ be a smooth projective  $\overline{k}$-rational surface over $k$,
let $\mathcal{L}$ be an invertible sheaf on~$S$
and let $F\in H^0(S,\sL^{\otimes 2})$ be a non-zero section with smooth zero locus ${C:=\{F=0\}\subset S}$. 
We define $p:\mathbb{P}:=\mathbb{P}_S(\mathcal{L}^{-1}\oplus\mathcal{L}^{-1}\oplus\mathcal{O}_S)\to S$ as a rank~$2$ projective bundle over $S$
in the sense of Grothendieck, with tautological bundle~$\sO_{\bP}(1)$.
Then $p_*\sO_{\bP}(1)\simeq\mathcal{L}^{-1}\oplus\mathcal{L}^{-1}\oplus\mathcal{O}_S$, and the last summand gives rise to a section $u\in H^0(\bP,\sO_{\bP}(1))$. Similarly, the first two summands of the isomorphism $p_*(p^*\mathcal{L}\otimes\sO_{\bP}(1))\simeq\mathcal{O}_S\oplus\mathcal{O}_S\oplus\mathcal{L}$
induce two sections $s,t\in H^0(\bP,p^*\mathcal{L}\otimes\sO_{\bP}(1))$.

\subsubsection{Characteristic not \texorpdfstring{$2$}{2}}
\label{parnot2}
Suppose first that $k$ has characteristic different from~$2$.
Define an embedded conic bundle $Y\subset\bP$ over $S$ by the equation
\begin{equation*}
Y:=\{s^2-t^2=u^2F\}\subset\bP.
\end{equation*}
Kummer theory yields a surjection $\kappa:k^*\surj H^1(k,\Z/2\Z)$ with kernel $(k^*)^2$. We fix $\alpha\in k^*$, we set $a:=\kappa(\alpha)$, and we choose $\beta\in\overline{k}$ such that $\beta^2=\alpha$. 
We consider the embedded conic bundle $X_{\alpha}\subset\bP$ over $S$ with equation
\begin{equation}
\label{coniceq}
X_{\alpha}:=\{s^2-\alpha t^2=u^2F\}\subset\bP,
\end{equation}
which will turn out to be a twist of $Y$ (see Proposition \ref{conicbundles} (i)).

\begin{lem}
\label{ratsqrt}
If $S$ is $k(\beta)$-rational, then so is $X_{\alpha}$. 
\end{lem}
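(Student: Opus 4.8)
The plan is to show that over the field $k(\beta)$, the conic bundle $X_\alpha$ acquires a rational section, which will exhibit it as birational to a projective-bundle-type object over the $k(\beta)$-rational base $S$. First I would base-change everything to $k(\beta)$, over which $\alpha=\beta^2$ becomes a square, so that the defining equation $s^2-\alpha t^2=u^2 F$ factors as $(s-\beta t)(s+\beta t)=u^2 F$. The key observation is that the substitution $s'=s-\beta t$, $s''=s+\beta t$ is a linear change of coordinates on the fibres of $\bP$ (invertible since the characteristic is different from $2$), so over $k(\beta)$ the variety $X_\alpha$ is isomorphic to the conic bundle $\{s's''=u^2 F\}\subset\bP_{k(\beta)}$.

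Next I would use the section $u$ together with one of the new coordinate sections to produce an explicit rational section of $p:X_{\alpha,k(\beta)}\to S_{k(\beta)}$, or better, directly trivialize the conic bundle birationally. The conic $\{s's''=u^2 F\}$ in each fibre $\bP^2$ is a smooth conic containing the rational point given by $u=0$, $s'=0$ (equivalently the line $\{u=s'=0\}$), so the generic fibre over the function field $k(\beta)(S)$ is a smooth conic with a rational point, hence isomorphic to $\bP^1$. This makes $X_{\alpha,k(\beta)}$ birational to $\bP^1_{k(\beta)(S)}$, i.e.\ birational to $\bP^1\times S$ over $k(\beta)$. Since $S$ is assumed $k(\beta)$-rational, the product $\bP^1_{k(\beta)}\times S$ is $k(\beta)$-rational, and therefore so is $X_\alpha$.

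Concretely, the cleanest route is probably to project the fibre conic from the rational point to get an explicit birational parametrization, or equivalently to note that the open subvariety of $X_{\alpha,k(\beta)}$ where, say, $s'\neq 0$ is isomorphic via $s''=u^2F/s'$ to an open subset of the affine plane bundle with coordinates $(u,s')$ over $S$, which is rational over the rational base. I would phrase the argument birationally at the level of generic fibres to avoid tracking the precise locus where the tautological sections vanish or where $F$ vanishes.

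The main obstacle to watch is ensuring the chain of birational identifications genuinely lands at a $k(\beta)$-rational variety rather than merely a $\overline{k}$-rational one: one must keep all the coordinate changes and the chosen rational point defined over $k(\beta)$, which is exactly why one passes to $k(\beta)$ (so that $\beta$, and hence the factorization $s^2-\alpha t^2=(s-\beta t)(s+\beta t)$, is available) and why the characteristic-$\neq 2$ hypothesis matters for the change of variables to be invertible. The assumption that $C=\{F=0\}$ is smooth guarantees $X_\alpha$ is smooth, but for this lemma only the birational structure is needed, so degenerate fibres over $C$ cause no trouble once one argues generically over $S$.
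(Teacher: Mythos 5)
Your proof is correct and is essentially the paper's own argument: the paper simply exhibits the $k(\beta)(S)$-point $s=\beta$, $t=1$, $u=0$ on the generic fibre conic (which is exactly your point $s'=u=0$ written in the original coordinates), concludes the generic fibre is $k(\beta)(S)$-rational, and hence that $X_\alpha$ is birational to $\bP^1_{k(\beta)}\times S_{k(\beta)}$. Your preliminary factorization $(s-\beta t)(s+\beta t)=u^2F$ is a harmless but unnecessary extra step.
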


\begin{proof}
The generic fiber of the projection $p|_{X_{\alpha}}:X_{\alpha}\to S$ is a conic that has a $k(\beta)(S)$\nobreakdash-point given by $s=\beta$, $t=1$ and $u=0$, hence is $k(\beta)(S)$-rational. The lemma follows at once.
\end{proof}

The $S$\nobreakdash-automorphism $\delta$ of $Y$ given by the formula $(s,t,u)\mapsto (s,-t,u)$ yields a $\Gamma_k$\nobreakdash-equivariant morphism $\psi:\Z/2\Z\to \Aut(Y_{\overline{k}})$ with $\psi(1)=\delta$.
For $a\in H^1(k,\Z/2\Z)$, we consider the twist ${}_{\psi(a)}Y$ of $Y$, where we still denote by $\psi$ the composition $H^1(k,\Z/2\Z)=Z^1(k,\Z/2\Z)\to Z^1(k,\Aut(Y_{\overline{k}}))$.

\begin{prop}
\label{conicbundles}
 Assume that $k$ is perfect. 
\begin{enumerate}[(i)]
\item The varieties $X_{\alpha}$ and ${}_{\psi(a)}Y$ are $k$-isomorphic.
\item There is an isomorphism $J^3Y\simeq J^1C$ of ppavs over $k$.
\item There is an isomorphism $J^3X_{\alpha}\simeq {}_{\varphi(a)}(J^1C)$ of ppavs over $k$.
\item If $C_{\overline{k}}$ is connected, of genus $\geq 2$ and not hyperelliptic, and if $\beta\notin k$, then $X_{\alpha}$ is not $k$-rational.
\end{enumerate}
\end{prop}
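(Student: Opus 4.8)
The plan is to prove the four assertions of Proposition~\ref{conicbundles} in sequence, with parts (i)--(iii) being the computational core and part (iv) following formally from the machinery already established. I would first dispose of (i) by identifying the Galois cocycle that relates $X_\alpha$ to $Y$. Over $\overline{k}$, the two conic bundles $X_\alpha$ and $Y$ become isomorphic via the coordinate change $(s,t,u)\mapsto(s,\beta t,u)$, which sends the equation $s^2-t^2=u^2F$ to $s^2-\alpha t^2=u^2F$. The failure of this isomorphism to descend to $k$ is measured by $\gamma(\beta)/\beta=\pm1$, so the associated cocycle is precisely $\psi(a)$ with $a=\kappa(\alpha)$, where $\psi(1)=\delta$ is the involution $(s,t,u)\mapsto(s,-t,u)$. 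Checking the cocycle condition and matching it against the definition of the twist ${}_{\psi(a)}Y$ gives (i).

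Next I would establish (ii), the identification $J^3Y\simeq J^1C$ as ppavs over~$k$. Here the key geometric input is that $Y$ is birational over $k$ to a more familiar model. Since $s^2-t^2=(s-t)(s+t)$, the conic bundle $Y$ degenerates exactly over $C=\{F=0\}$, and away from $C$ the fibers are split conics; this exhibits $Y$ as birational to a $\bP^1$-bundle blown up along curves mapping to $C$, or more directly realizes the discriminant curve as $C$ itself. The expected mechanism is to write down an explicit birational map from $Y$ to a blow-up of a $\overline{k}$-rational threefold (built from the $\overline{k}$-rational surface $S$) along a curve isomorphic to $C$, and then invoke Lemma~\ref{lembl}: blowing up a smooth one-dimensional center $C$ contributes a factor $J^1C$ to the intermediate Jacobian, while the ambient rational threefold contributes nothing since its $J^3$ vanishes by Corollary~\ref{CGk}. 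Combined with the birational invariance from Theorem~\ref{thbir}, this yields $J^3Y\simeq J^1C$. I also need to verify along the way that $\CH_0(Y)_\Q$ is supported in dimension~$1$, which holds because $Y$ is $\overline{k}$-rational (being a conic bundle over a rational surface with a rational point generically), so Property~\ref{assum} applies.

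Part (iii) is then essentially formal: I would apply Proposition~\ref{twisttwist} to the twist. By (i), $X_\alpha={}_{\psi(a)}Y$, and Proposition~\ref{twisttwist} gives $J^3({}_{\psi(a)}Y)\simeq{}_{\chi(\psi(a))}(J^3Y)$, where $\chi$ sends an automorphism $g$ of $Y_{\overline{k}}$ to $g^+$ on the intermediate Jacobian. The only point requiring care is to check that $\chi\circ\psi=\varphi$, i.e.\ that the involution $\delta=(s,t,u)\mapsto(s,-t,u)$ induces $-\Id$ on $J^3Y=J^1C$. This is plausible because $\delta$ acts as the hyperelliptic-type involution swapping the two sheets of the double cover structure implicit in $s^2-t^2$, and such an involution acts by $-1$ on the relevant odd cohomology $H^3(Y_{\overline{k}},\Q_\ell(2))$, hence by $-\Id$ on the associated abelian variety; I expect this sign computation to be the main obstacle, as it requires tracking the action of $\delta$ through the identification $J^3Y\simeq J^1C$ of (ii) rather than treating it abstractly. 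Granting it, (iii) follows via (ii): $J^3X_\alpha\simeq{}_{\varphi(a)}(J^3Y)\simeq{}_{\varphi(a)}(J^1C)$.

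Finally, (iv) combines everything with the Torelli-theoretic criterion. Suppose $X_\alpha$ were $k$-rational. Since $X_\alpha$ is $\overline{k}$-rational (by (i) together with the $\overline{k}$-rationality of $Y$), Corollary~\ref{CGk} forces $J^3X_\alpha$ to be isomorphic, as a ppav over~$k$, to the Jacobian $J^1C'$ of some smooth projective curve $C'$ over~$k$. By (iii), this means ${}_{\varphi(a)}(J^1C)\simeq J^1C'$, so condition~(i) of Proposition~\ref{Torelli} holds. Under the hypotheses that $C_{\overline{k}}$ is connected of genus $\geq2$ and non-hyperelliptic, the implication (i)$\Rightarrow$(ii) of Proposition~\ref{Torelli} then forces $a$ to be trivial, i.e.\ $\alpha\in(k^*)^2$, i.e.\ $\beta\in k$, contradicting the assumption $\beta\notin k$. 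Hence $X_\alpha$ is not $k$-rational.
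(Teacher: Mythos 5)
Your parts (i), (ii) and (iv) follow the paper's own route: (i) via the same descent cocycle, (ii) by exhibiting $Y$ over $k$ as the blow-up of the $\bP^1$-bundle $W=\bP_S(\sL^{-1}\oplus\sO_S)$ along a curve isomorphic to $C$ and invoking Lemma~\ref{lembl} together with $J^3W=0$ (the paper supplies the explicit $k$-morphism $(v,w)\mapsto((w^2F+v^2)/2,(w^2F-v^2)/2,vw)$ realizing this blow-up structure, which your sketch defers but correctly identifies as the needed ingredient), and (iv) by combining (iii) with Corollary~\ref{CGk} and Proposition~\ref{Torelli} exactly as the paper does.

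The genuine gap is in (iii), and it is precisely the step you flag and then ``grant'': the identity $\chi\circ\psi=\varphi$, i.e.\ that $\delta^+=-\Id$ on $J^3Y$. Your heuristic does not hold up as stated: $\delta$ is not a fixed-point-free sheet exchange of a double cover (its fixed locus on $Y$ is the divisor $\{t=0\}\cap Y$), and an involution does not act by $-1$ on odd cohomology in general --- to argue along your lines one would have to prove that the $\delta$-invariant part of $H^3(Y_{\overline{k}},\Q_\ell)$ vanishes, e.g.\ that $H^3$ of the quotient $Y/\delta$ vanishes, which is a computation of the same order of difficulty as the one being avoided. What $\delta$ actually swaps is the two irreducible components $E=\{F=s-t=0\}$ and $\delta(E)=\{F=s+t=0\}$ of $(p|_Y)^{-1}(C)$, and the paper's proof exploits exactly this: writing $f:C\to S$ and $j:E\hookrightarrow Y$ for the inclusions, a push-pull argument applied to the normalization of $(p|_Y)^{-1}(C)$ gives the identity $(1+\delta^*)\circ j_*\circ (p|_E)^*=(p|_Y)^*\circ f_*$ on $\CH^1(C_{\overline{k}})_{\alg}$; the right-hand side vanishes identically because it factors through $\CH^2(S_{\overline{k}})_{\alg}=0$ ($S$ being $\overline{k}$-rational), while $j_*\circ (p|_E)^*$ is an isomorphism onto $\CH^2(Y_{\overline{k}})_{\alg}$ by the blow-up description from (ii). Hence $\delta^*=-\Id$ on $\CH^2(Y_{\overline{k}})_{\alg}$, and therefore $\delta^+=-\Id$ on $\Ab^2Y$ by the surjectivity of $\phi^2_Y$. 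Without this computation (or an equivalent cohomological one using $H^3(S_{\overline{k}},\Q_\ell)=0$), assertion (iii) --- and with it (iv) --- remains unproved.
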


\begin{proof}
The isomorphism $i:Y_{\overline{k}}\simeq (X_{\alpha})_{\overline{k}}$ given by $(s,t,u)\mapsto (s,t/\beta,u)$ satisfies $\gamma(i(y))=i(\psi(a_{\gamma})\cdot\gamma(y))$ for all $y\in Y(\overline{k})$ and $\gamma\in\Gamma_k$. The description of ${}_{\psi(a)}Y$ given in \S\ref{twists} then shows that ${}_{\psi(a)}Y\simeq X_{\alpha}$ as varieties over $k$, proving (i).

Define $W:=\bP_S(\sL^{-1}\oplus \sO_S)$ with projection $q:W\to S$ and tautological bundle $\sO_W(1)$, the two factors of $\sL^{-1}\oplus \sO_S$ inducing sections $v\in H^0(W,q^*\sL\otimes\sO_W(1))$ and $w\in H^0(W,\sO_W(1))$. The $S$-rational map $W\dashrightarrow Y$ given by the formula $(v,w)\mapsto(s,t,u)=((w^2F+v^2)/2,(w^2F-v^2)/2,vw)$ identifies $Y$ with the blow-up of $W$ along the curve $\{v=F=0\}$, which is isomorphic to $C$. Let $E:=\{F=s-t=0\}$ be the exceptional divisor, with inclusion $j:E\hookrightarrow Y$.  
Since $W$ and hence also~$Y$ are smooth projective $\overline{k}$-rational threefolds, they satisfy Property \ref{assum} by Corollary~\ref{CGk}.
One has ${\CH^2(W_{\overline{k}})_{\alg}\simeq \CH^1(S_{\overline{k}})_{\alg}=0}$, hence $J^3W=0$, by the computation of the Chow groups of a projective bundle \cite[Theorem 3.3 (b)]{Fulton} and because $S$ is $\overline{k}$\nobreakdash-rational.  We deduce that $J^3Y\simeq J^3W\times J^1C\simeq J^1C$ as ppavs over~$k$ by Lemma~\ref{lembl}, and (ii) is proved.

The inverse image $Z:=(p|_Y)^{-1}(C)\subset Y$ is the union of $E$ and $\delta(E)$.
The total space of the normalization $\nu:\widetilde{Z}\to Z$ is thus isomorphic to the disjoint union of $E$ and $\delta(E)$.  Let $f:C\to S$ and $g:Z\to Y$ be the inclusions, and $h:=p|_Z\circ\nu:\widetilde{Z}\to C$. Applying \cite[Proposition~6.6 (b) and (c)]{Fulton} (especially the statement there concerning \cite[Theorem~6.2~(a)]{Fulton}) shows that $g_*\circ(p|_Z)^*=(p|_Y)^*\circ f_*:\CH^1(C_{\overline{k}})\to \CH^2(Y_{\overline{k}})$. One verifies easily that $(p|_Z)^*=\nu_*\circ h^*:\CH^1(C_{\overline{k}})\to \CH^1(Z_{\overline{k}})$ on the generators of $\CH^1(C_{\overline{k}})$. Consequently, $(g\circ\nu)_*\circ h^*=(p|_Y)^*\circ f_*:\CH^1(C_{\overline{k}})\to \CH^2(Y_{\overline{k}})$. Since $\CH^2(S_{\overline{k}})_{\alg}=0$ as $S$ is $\overline{k}$-rational, the map $(g\circ\nu)_*\circ h^*:\CH^1(C_{\overline{k}})_{\alg}\to \CH^2(Y_{\overline{k}})_{\alg}$ vanishes identically. Equivalently, $$(1+\delta^*)\circ j_*\circ (p|_E)^*:\CH^1(C_{\overline{k}})_{\alg}\to \CH^2(Y_{\overline{k}})_{\alg}$$
is identically zero. Since $ j_*\circ (p|_E)^*:\CH^1(C_{\overline{k}})_{\alg}\to \CH^2(Y_{\overline{k}})_{\alg}$ is an isomorphism by the description of $Y$ as a blow-up of $W$ and by the computation of the Chow groups of blow-ups and projective bundles \cite[Theorem 3.3 (b), Proposition~6.7~(e)]{Fulton}, we see that 
$\delta^*z=-z$ for all $z\in\CH^2(Y_{\overline{k}})_{\alg}$. As a consequence of this identity, one has $\chi\circ\psi=\varphi:\Z/2\Z\to \Aut_{\ppav}(J^3Y_{\overline{k}})$, where we use the notation of \S\S\ref{twists}--\ref{qtwists}.
By (i), Proposition \ref{twisttwist} and (ii),
$J^3X_{\alpha}\simeq J^3({}_{\psi(a)}Y)\simeq {}_{\varphi(a)}(J^3Y)\simeq {}_{\varphi(a)}(J^1C)$.

Finally, one deduces (iv) from (iii), Corollary \ref{CGk} and Proposition \ref{Torelli}.
\end{proof}

\begin{rem}
Over $k=\C$, Mumford described the intermediate Jacobian of a conic bundle as a Prym variety (see \cite[Appendix C]{CG} and \cite[Th\'eor\`eme~2.1]{Beauville}). Our computation that $J^3X_{\alpha}={}_{\varphi(a)}(J^1C)$ in Proposition~\ref{conicbundles}~(iv) is a variant of this result.
Assuming for simplicity that $C$ is geometrically connected of genus~$g$ and $\beta\notin k$, one may think of ${}_{\varphi(a)}(J^1C)$ as playing the role of the Prym variety of the double cover $C_{k(\beta)}\to C$.
Indeed, the Jacobian $J^1(C_{k(\beta)})$
of the smooth projective connected curve $C_{k(\beta)}$ over~$k$
coincides with the Weil restriction of scalars $\Res_{k(\beta)/k}((J^1C)_{k(\beta)})$,
and there is a canonical exact sequence of abelian varieties
\begin{align*}
\xymatrix@C=3.5em{
0 \ar[r] &
{}_{\varphi(a)}(J^1C)
\ar[r] &
\Res_{k(\beta)/k}((J^1C)_{k(\beta)})
\ar[r]^(.66){N_{k(\beta)/k}} &
J^1C \ar[r] & 0
}
\end{align*}
(obtained by twisting the exact sequence $0 \to J^1C \to J^1C \times J^1C \to J^1C \to 0$).
This differs from the classical setting in that the total space $C_{k(\beta)}$ of the double cover is not geometrically connected, which explains that the dimension of our intermediate Jacobian ${}_{\varphi(a)}(J^1C)$ is equal to $g$ and not to~$g-1$.
\end{rem}

\begin{cor}
\label{coroexemple}
Suppose that $\beta\notin k$, and let $F\in H^0(\bP^2_{k},\sO_{\bP^2_k}(2d))$ be the equation of a smooth plane curve for some $d\geq 2$. The smooth projective variety $X_{\alpha}$ over~$k$ with equation $\{s^2-\alpha t^2=u^2F\}$ as in (\ref{coniceq}) is $k(\beta)$-rational but not $k$-rational.
\end{cor}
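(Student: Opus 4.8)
The plan is to combine Lemma~\ref{ratsqrt} for the positive statement with Proposition~\ref{conicbundles}~(iv) for the negative one, the bulk of the work consisting in checking that the smooth plane curve $C=\{F=0\}$ of degree $2d$ meets the geometric hypotheses of the latter. For the $k(\beta)$-rationality I would simply observe that $S=\bP^2_k$ is $k$-rational, hence $k(\beta)$-rational, so that Lemma~\ref{ratsqrt} applies verbatim and yields that $X_{\alpha}$ is $k(\beta)$-rational; this step uses neither $d\geq 2$ nor any perfectness assumption.

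The substance lies in verifying the three geometric conditions on $C_{\overline{k}}\subset\bP^2_{\overline{k}}$, a smooth plane curve of degree $m:=2d\geq 4$. First, $C_{\overline{k}}$ is geometrically connected: twisting the structure sequence $0\to\sO_{\bP^2}(-m)\to\sO_{\bP^2}\to\sO_C\to 0$ and using $H^0(\bP^2_{\overline{k}},\sO_{\bP^2}(-m))=H^1(\bP^2_{\overline{k}},\sO_{\bP^2}(-m))=0$ for $m\geq 1$ gives $H^0(C_{\overline{k}},\sO_{C})=\overline{k}$. Second, the genus is $g=\binom{m-1}{2}=(2d-1)(d-1)\geq 3$, so in particular $g\geq 2$. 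Third, $C_{\overline{k}}$ is not hyperelliptic: by adjunction $K_C\cong\sO_C(m-3)$, and since $m-3\geq 1$ this is the restriction to $C$ of the very ample sheaf $\sO_{\bP^2}(m-3)$, hence is itself very ample; the canonical map is therefore a closed embedding, which is impossible for a hyperelliptic curve, whose canonical map is two-to-one onto a rational normal curve. This argument is uniform in $m\geq 4$ and insensitive to the characteristic.

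With these checks in place, the non-rationality follows from Proposition~\ref{conicbundles}~(iv), provided $k$ is perfect. To remove that hypothesis---recall that \S\ref{secconic} allows $k$ to be imperfect, whereas the intermediate Jacobian machinery underlying Proposition~\ref{conicbundles} requires perfectness---I would base change to the perfect closure $k^{\mathrm{perf}}$. Since $\mathrm{char}(k)\neq 2$ the extension $k(\beta)/k$ is separable, so $k(\beta)\cap k^{\mathrm{perf}}=k$ and thus $\beta\notin k^{\mathrm{perf}}$; moreover $(X_{\alpha})_{k^{\mathrm{perf}}}$ is again the conic bundle (\ref{coniceq}) and $C_{k^{\mathrm{perf}}}$ retains the three geometric properties above. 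Applying Proposition~\ref{conicbundles}~(iv) over the perfect field $k^{\mathrm{perf}}$ shows that $(X_{\alpha})_{k^{\mathrm{perf}}}$ is not $k^{\mathrm{perf}}$-rational. As $k$-rationality would descend to $k^{\mathrm{perf}}$-rationality after base change, $X_{\alpha}$ cannot be $k$-rational.

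The genuine mathematical content has already been absorbed into the earlier results (the construction of $J^3X$ as a $k$-rationality obstruction and the Torelli-type criterion of Proposition~\ref{Torelli}), so the main---and only mildly delicate---obstacles here are the uniform proof that plane curves of degree $\geq 4$ are non-hyperelliptic and the reduction to the perfect case; everything else is a direct application of the cited statements.
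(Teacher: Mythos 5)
Your proposal is correct and follows essentially the same route as the paper: Lemma~\ref{ratsqrt} for $k(\beta)$-rationality, and Proposition~\ref{conicbundles}~(iv) applied over the perfect closure for non-$k$-rationality, with the identical non-hyperellipticity argument via very ampleness of the canonical bundle $K_{C_{\overline{k}}}\cong\sO_C(2d-3)$. Your additional details (geometric connectedness, the genus computation, and the separability argument showing $\beta\notin k^{\mathrm{perf}}$) are all correct elaborations of what the paper leaves implicit.
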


\begin{proof}
We use the above results with $S=\bP^2_k$ and $\sL=\sO_{\bP^2_k}(d)$.
The first assertion is Lemma \ref{ratsqrt}. To prove the second assertion, one may apply Proposition~\ref{conicbundles}~(iv) over the perfect closure of $k$ since $C_{\overline{k}}$ is non-hyperelliptic of genus~$\geq 2$, as is any smooth plane curve of degree $\geq 4$ ($K_{C_{\overline{k}}}$ is very ample as a positive multiple of~$\sO_{\bP^2_{\overline{k}}}(1)$, whereas the canonical bundle of a hyperelliptic curve is not).
\end{proof}

\begin{rem}
Corollary \ref{coroexemple} would fail for $d=1$, as $\{s^2-\alpha t^2=u^2(x^{2}+y^{2}+z^2)\}$ is birational to the smooth quadric with a $k$-point $\{s^2-\alpha t^2=x^2+y^2+z^2\}\subset\bP^4_k$, hence is $k$\nobreakdash-rational. In this case, $C_{\overline{k}}$ has genus $0$.

Let us illustrate further the importance of the hypothesis that $C_{\overline{k}}$ is of genus $\geq 2$ and not hyperelliptic in Proposition \ref{conicbundles} (iv).
Fix $d\geq 1$, let $\Phi\in H^0(\bP^1_{k},\sO_{\bP^1_{k}}(2d))$ be a polynomial with pairwise distinct roots over $\overline{k}$, and consider the 
 projective bundle $q:S:=\bP_{\bP^1_{k}}(\sO_{\bP^1_{k}}\oplus\sO_{\bP^1_{k}}(d))\to\bP^1_{k}$ with tautological line bundle $\mathcal{L}:=\sO_S(1)$. There are two canonical sections $v\in H^0(S,\sO_S(1))$ and $w\in H^0(S,q^*\sO_{\bP^1_{k}}(-d)\otimes\sO_S(1))$, and one may consider $C:=\{F=0\}\subset S$ with $F:=v^2-\Phi w^2$. The curve $C_{\overline{k}}$ is smooth, connected, of genus $g=d-1$, and hyperelliptic. The conic bundle $X_{\alpha}:=\{s^2-\alpha t^2=u^2F\}$ over $S$ as in (\ref{coniceq}) satisfies $J^3X_{\alpha}\simeq J^1C'$, where $C'$ is a hyperelliptic twist of $C$ (by Proposition \ref{conicbundles}~(iii) and Proposition~\ref{Torelli}). Consequently, one cannot deduce from Corollary \ref{CGk} that $X_{\alpha}$ is not $k$\nobreakdash-rational. 
This is fortunate, because $X_{\alpha}$ is birational to the quadric surface bundle $\{s^2-\alpha t^2-v^2+\Phi w^2=0\}$ over~$\bP^1_{k}$. This quadric bundle has a rational section given by $s=v=1$ and $t=w=0$, showing that $X_{\alpha}$ is $k$-rational.
\end{rem}

\begin{ex}
\label{extexte}
When $d=2$, the varieties of Corollary \ref{coroexemple} are often $k$-unirational. We only give one example: we show that the variety $X_{\alpha}$ over $k$ defined by the equation $\{s^2-\alpha t^2=u^2(x^{4}+y^{4}+z^4)\}$ as in (\ref{coniceq}) is $k$-unirational.
Applying \cite[Lemma~3.5]{dP2} with $a_1=a_2=1$, $a_3=-1$ and $a_4=a_5=a_6=0$ shows that the degree $2$ del Pezzo surface $T:=\{s^2=x^4+y^4+1\}\subset X_{\alpha}$ is $k$-unirational. Then $X_{\alpha}$ is dominated by the fiber product $X_{\alpha}\times_{\bP^2_k}T$ which, as a conic bundle with a rational section over the  $k$-unirational variety $T$, is $k$-unirational.
\end{ex}

\subsubsection{Characteristic \texorpdfstring{$2$}{2}}
Let us now assume that $k$ has characteristic $2$. We only explain how to modify the statements and arguments of \S\ref{parnot2} in this case.

Artin--Schreier theory yields a surjection $\kappa:k\surj H^1(k,\Z/2\Z)$, whose kernel consists of the elements of the form $\beta^2+\beta$ for some $\beta\in k$. We fix $\alpha\in k$, we set $a:=\kappa(\alpha)$, and we choose $\beta\in\overline{k}$ such that $\beta^2+\beta=\alpha$. 

We define an embedded conic bundle $Y:=\{s^2+st=u^2F\}\subset\bP$, and we let $\delta$ be the $S$-automorphism of $Y$ given by the formula $(s,t,u)\mapsto (s+t,t,u)$.
We consider the embedded conic bundle $X_{\alpha}\subset\bP$ over $S$ with equation
\begin{equation}
\label{coniceq2}
X_{\alpha}:=\{s^2+st+\alpha t^2=u^2F\}\subset\bP.
\end{equation}

With these modifications, Lemma \ref{ratsqrt} and Proposition \ref{conicbundles} continue to hold, with the same proofs (in the proof of Proposition \ref{conicbundles} (i), take $i:(s,t,u)\mapsto (s+t\beta,t,u)$).
We deduce from these statements an analogue of Corollary \ref{coroexemple}, using exactly the same arguments.

\begin{cor}
\label{coroexemple2}
Suppose that $\beta\notin k$, and let $F\in H^0(\bP^2_{k},\sO_{\bP^2_k}(2d))$ be the equation of a smooth plane curve for some $d\geq 2$. The smooth projective variety $X_{\alpha}$ over~$k$ with equation $\{s^2+st+\alpha t^2=u^2F\}$ as in (\ref{coniceq2}) is $k(\beta)$-rational but not $k$-rational.
\end{cor}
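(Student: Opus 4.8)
The plan is to reduce Corollary~\ref{coroexemple2} to the machinery already established for the characteristic~$2$ conic bundle $X_\alpha=\{s^2+st+\alpha t^2=u^2F\}$ of~(\ref{coniceq2}), following verbatim the argument given for Corollary~\ref{coroexemple} in odd characteristic. First I would specialize the general conic bundle setup of \S\ref{secconic} to $S=\bP^2_k$ and $\sL=\sO_{\bP^2_k}(d)$, so that the zero locus $C=\{F=0\}\subset\bP^2_k$ of the chosen section $F\in H^0(\bP^2_k,\sO_{\bP^2_k}(2d))$ is a smooth plane curve of degree $2d\geq 4$.

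For the $k(\beta)$-rationality, I would invoke the analogue of Lemma~\ref{ratsqrt} (which the text asserts continues to hold with the same proof in characteristic~$2$): the generic fiber of $p|_{X_\alpha}\colon X_\alpha\to S$ is a conic acquiring a $k(\beta)(S)$-point, namely $s=\beta$, $t=1$, $u=0$ — indeed $\beta^2+\beta\cdot 1+\alpha\cdot 1=\beta^2+\beta+\alpha=0$ by the defining relation $\beta^2+\beta=\alpha$ — so the conic is $k(\beta)(S)$-rational, and since $S=\bP^2_k$ is $k(\beta)$-rational, so is $X_\alpha$.

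For the failure of $k$-rationality, I would apply part~(iv) of the (characteristic~$2$ version of) Proposition~\ref{conicbundles}, working over the perfect closure of~$k$ if necessary since Property~\ref{assum} and the intermediate Jacobian depend only on $(X_\alpha)_{\overline k}$. The hypotheses of that part are exactly $\beta\notin k$ (given) together with the requirement that $C_{\overline k}$ be connected, of genus $\geq 2$, and non-hyperelliptic. These geometric facts about $C_{\overline k}$ hold for precisely the same reason as in Corollary~\ref{coroexemple}: a smooth plane curve of degree $\geq 4$ has very ample canonical bundle, being cut out by a positive multiple of $\sO_{\bP^2_{\overline k}}(1)$, whereas a hyperelliptic curve has non-very-ample canonical bundle, so $C_{\overline k}$ cannot be hyperelliptic; and its genus $\frac{(2d-1)(2d-2)}{2}\geq 3$ is at least~$2$. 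Proposition~\ref{conicbundles}~(iv) then yields that $X_\alpha$ is not $k$-rational.

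I expect no genuine obstacle here, as the corollary is essentially a transcription of Corollary~\ref{coroexemple} into characteristic~$2$; the only point demanding any care is to confirm that the genus and non-hyperellipticity arguments for plane curves are characteristic-independent (they are, relying only on adjunction and the very ampleness of $\sO_{\bP^2}(1)$), and to check that the proposed $k(\beta)$-point of the conic is correct for the Artin--Schreier equation $s^2+st+\alpha t^2$ rather than the Kummer equation $s^2-\alpha t^2$ — which it is, via $\beta^2+\beta+\alpha=0$.
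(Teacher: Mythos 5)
Your proposal is correct and follows essentially the same route as the paper: the paper proves Corollary \ref{coroexemple2} by noting that Lemma \ref{ratsqrt} and Proposition \ref{conicbundles} hold in characteristic $2$ with the same proofs, and then repeating the argument of Corollary \ref{coroexemple} verbatim (applying Proposition \ref{conicbundles}~(iv) over the perfect closure, with non-hyperellipticity of $C_{\overline{k}}$ coming from the very ampleness of its canonical bundle). Your verification that the point $s=\beta$, $t=1$, $u=0$ lies on the Artin--Schreier conic and that the plane-curve genus and hyperellipticity arguments are characteristic-free matches the paper's reasoning exactly.
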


\begin{ex}
\label{extexte2}
When $d=2$, the varieties of Corollary \ref{coroexemple2} are often $k$-unirational. We only give one example: we show that the variety $X_{\alpha}$ over $k$ defined by the equation $\{s^2+st+\alpha t^2=u^2(x^3y+y^3z+z^3x)\}$ as in (\ref{coniceq}) is $k$-unirational.
Arguing as in Example \ref{extexte}, it suffices to show that $T:=\{s^2=x^3y+y^3z+z^3x\}$ is $k$\nobreakdash-unirational. But this variety is even $\bF_2$-unirational since
$$\bF_2(T)=\bF_2(x,y,(x^3y+y^3+x)^{1/2})\subset\bF_2(x^{1/2},y^{1/2}).$$
\end{ex}

\section{Unramified cohomology of real threefolds}
\label{secunram}

We now restrict to the field $k=\R$ of real numbers and study in detail another strategy to show that a $\C$-rational threefold over~$\R$ is not $\R$-rational, making use of unramified cohomology.
Recall that $G=\Gal(\C/\R)$.

\subsection{Bloch--Ogus theory}
\label{unramcoho}
If $X$ is a smooth variety over $\R$, the group $G$ acts continuously on $X(\C)$ and we will consider, for any $G$\nobreakdash-module $M$ and any $i\geq 0$, the $G$-equivariant Betti cohomology groups $H^i_G(X(\C),M)$. Let $\sH^i_X(M)$ be the Zariski sheaf on~$X$ associated with the presheaf $U\mapsto H^i_G(U(\C),M)$. The degree $i$ \textit{unramified cohomology group} of $X$ with coefficients in $M$ is $H^i_{\nr}(X,M):=H^0(X,\sH^i_X(M))$.
The sheaf $\sH^i_{\Spec(\R)}(M)$ is the constant sheaf $H^i(G,M)$, and pulling-back along the structural morphism yields a morphism $H^i(G,M)\to H^i_{\nr}(X,M)$.

We refer to \cite[\S 5.1]{BW} for more information on the sheaves $\sH^i_X(M)$ and their Zariski cohomology groups. It is explained there that
\cite[Corollary 5.1.11]{BOG} may be applied in this context, which is usually referred to as the validity of Gersten's conjecture. In particular, defining
$$\mathcal{C}^{i,c}_X(M):=\bigoplus_{x\in X^{(c)}}\iota_{x,*}\varinjlim_{U\subset \overline{\{x\}}} H^{i-c}_G(U(\C),M(-c)),$$
where $X^{(c)}$ is the set of codimension $c$ points of $X$, $\iota_x:x\to X$ is the inclusion, and $U$ runs  over the dense open subvarieties of $\overline{\{x\}}$, the sheaf $\sH^i_X(M)$ admits a flasque resolution by a Cousin complex
\begin{equation}
\label{Gersten}
0\to \sH^i_X(M)\to \mathcal{C}^{i,0}_X(M)\to \mathcal{C}^{i,1}_X(M)\to\dots
\end{equation}
whose arrows are induced by residue maps in long exact sequences of $G$-equivariant cohomology with support.
Consequently, the Zariski cohomology groups of $\sH^i_X(M)$ may be computed as the cohomology of the complex obtained by taking the global sections of the Cousin complex.
Using this description, the arguments of \cite[Appendice A]{CTV} adapt to the real setting and show that correspondences between smooth projective varieties act naturally on the groups $H^j(X,\sH^i_X(M))$.

\subsection{Obstructions to rationality} 
\label{obstrat}
We first recall two definitions originating respectively from \cite[Definition 3.1, Lemma 3.5]{Saltman} 
and \cite[\S 1.2]{CH0trivial}.

\begin{Def}
\label{retract}
A smooth projective variety $X$ over a field $k$ is \textit{retract} $k$-\textit{rational} if there exist a dense open subset $U\subset X$, a $k$-rational variety $V$ and morphisms $f:U\to V$ and $g:V\to U$ such that $g\circ f=\Id$.
It is \textit{universally} $\CH_0$-\textit{trivial} if for every field extension $k\subset l$, the degree map $\deg:\CH_0(X_l)\to\Z$ is an isomorphism.
\end{Def}

It is obvious that a smooth projective $k$-rational variety is retract $k$-rational (more generally, stably $k$-rational varieties are retract $k$-rational), and a smooth projective retract $k$-rational variety is universally $\CH_0$-trivial by \cite[Lemme 1.5]{CtPi}.
 The following proposition is a variant of classical results (see for instance \cite[Corollaire du Th\'eor\`eme 2]{BVbirationnels} or \cite[Theorem 1.4]{CH0trivial}).

\begin{prop}
\label{trivialunramcoho}
Let $X$ be a smooth projective variety over $\R$ that is universally $\CH_0$-trivial. Then for any $i\geq 0$ and any $G$-module $M$, $H^i(G,M)\stackrel{\sim}\longrightarrow H^i_{\nr}(X,M)$.

In particular, the conclusion holds if $X$ is retract $\R$-rational.
\end{prop}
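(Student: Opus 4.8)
The plan is to deduce the statement from a decomposition of the diagonal, exploiting the action of correspondences on the Zariski cohomology of the sheaves $\sH^i_X(M)$ recalled at the end of \S\ref{unramcoho}. Write $\pi\colon X\to\Spec(\R)$ for the structural morphism, so that the map under study is the pull-back $\pi^*\colon H^i(G,M)\to H^i_{\nr}(X,M)$. The final assertion will be immediate once the first is proved, since a smooth projective retract $\R$-rational variety is universally $\CH_0$-trivial by \cite[Lemme 1.5]{CtPi}, as recalled just before the statement.

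First I would record two consequences of universal $\CH_0$-triviality. As $\deg\colon\CH_0(X)\to\Z$ is surjective, $X$ carries a zero-cycle of degree~$1$; since over $\R$ a closed point has residue field $\R$ or $\C$, hence degree $1$ or $2$, such a cycle forces the existence of a real point $P\in X(\R)$. As $P$ is a section of $\pi$, one gets $P^*\circ\pi^*=\Id$ on $H^i(G,M)$, so that $\pi^*$ is injective. Next, over the function field $\R(X)$ the restriction of $[\Delta_X]$ to the generic fibre $X_{\R(X)}$ of the second projection is a zero-cycle of degree~$1$, hence equals $[P]_{\R(X)}$ because $\CH_0(X_{\R(X)})\cong\Z$; spreading this relation out yields a decomposition
\[
[\Delta_X]=[P\times X]+Z\quad\text{in}\ \CH_{\dim X}(X\times X),
\]
where $Z$ is supported on $X\times D$ for some closed subset $D\subsetneq X$. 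This is the usual Bloch--Srinivas decomposition of the diagonal \cite{BS}.

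Now I would let both sides act on $H^0(X,\sH^i_X(M))=H^i_{\nr}(X,M)$. The diagonal acts as the identity. The first summand acts as $\pi^*\circ P^*$: restricting $p_1^*\alpha$ to $\{P\}\times X$ produces the constant class $\pi^*(P^*\alpha)$, which is preserved by the isomorphism $p_{2*}$. The hard part is to show that $Z$ acts as zero. Since $Z$ is supported on $X\times D$, the class $Z_*\alpha=p_{2*}(p_1^*\alpha\cdot Z)$ is supported, in the target copy of $X$, on $D$; equivalently, its restriction to the dense open $U:=X\setminus D$ is computed by the correspondence $Z\cap(X\times U)=0$, hence vanishes. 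It therefore suffices that the restriction map $H^0(X,\sH^i_X(M))\to H^0(U,\sH^i_X(M))$ be injective, and this is exactly what the Gersten resolution (\ref{Gersten}) provides: the sheaf $\sH^i_X(M)$ embeds into the term $\mathcal{C}^{i,0}_X(M)$, which depends only on the generic point, so a global section dying on a dense open is zero. Granting this, $Z_*=0$ on $H^i_{\nr}(X,M)$.

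Combining the three computations gives, for every $\alpha\in H^i_{\nr}(X,M)$,
\[
\alpha=[\Delta_X]_*\alpha=[P\times X]_*\alpha+Z_*\alpha=\pi^*(P^*\alpha),
\]
so that $\pi^*\circ P^*=\Id$ on $H^i_{\nr}(X,M)$; together with $P^*\circ\pi^*=\Id$ noted above, this shows that $\pi^*$ and $P^*$ are mutually inverse isomorphisms, which is the desired conclusion. The point requiring most care is the compatibility of the correspondence action with restriction to open subsets and with the embedding $\sH^i_X(M)\hookrightarrow\mathcal{C}^{i,0}_X(M)$; both are built into the Bloch--Ogus formalism adapted to the real setting in \S\ref{unramcoho} (following \cite[Appendice A]{CTV}), and this is where I expect the main technical effort to lie.
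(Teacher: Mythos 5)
Your proof is correct and follows essentially the same route as the paper's: injectivity via restriction to a real point, an integral decomposition of the diagonal $\Delta_X=\{x\}\times X+Z$ (which the paper cites from \cite[Lemma 1.3]{CH0trivial} rather than rederiving), and the vanishing of $Z_*\alpha$ via the Cousin complex description of $H^i_{\nr}(X,M)$, with the correspondence action supplied by the real adaptation of \cite[Appendice A]{CTV}. The only cosmetic difference is that you identify $(\{x\}\times X)_*\alpha$ exactly as $\pi^*(x^*\alpha)$, where the paper only notes it lies in the image of $H^i(G,M)$.
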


\begin{proof}
Since $X$ has a zero-cycle of degree $1$, it has a real point $x\in X(\R)$. The restriction to $x$ is a retraction of $H^i(G,M)\to H^i_{\nr}(X,M)$, showing its injectivity.

By \cite[Lemma 1.3]{CH0trivial}, $X$ admits an integral decomposition of the diagonal: if $d$ is the dimension of $X$, there is an equality $\Delta_X=\{x\}\times X+Z\in \CH^d(X\times X)$, where $\Delta_X$ is the diagonal and $Z$ is supported on a closed subset $X\times D$ where $D\varsubsetneq X$. One may now argue as in the proof of \cite[Proposition 3.3 (i)]{CTV} by letting these correspondences act on $\alpha\in H^i_{\nr}(X,M)$. Of course $\Delta_{X,*}\alpha=\alpha$, and $(X\times\{x\})_*\alpha$ is in the image of $H^i(G,M)\to H^i_{\nr}(X,M)$. Moreover, $Z_*\alpha$ vanishes in the complement of $D$, hence vanishes as one sees immediately from the description of $H^i_{\nr}(X,M)$ as a cohomology group of the complex of global sections of the Cousin complex. We have shown the surjectivity of $H^i(G,M)\to H^i_{\nr}(X,M)$.

Finally, the last assertion follows from \cite[Lemme 1.5]{CtPi}.
\end{proof}

\subsection{The case of \texorpdfstring{$\C$-rational}{𝐂-rational} threefolds}
We understand completely when these invariants allow one to show that a $\C$-rational threefold is not (retract) $\R$-rational. 

\begin{thm}
\label{thunr3}
Let $X$ be a smooth projective threefold over $\R$ that is $\C$-rational. The following are equivalent:
\begin{enumerate}
\item For any $i\geq 0$ and any $G$-module $M$, $H^i(G,M)\stackrel{\sim}\longrightarrow H^i_{\nr}(X,M)$.
\item The variety $X$ satisfies:
\begin{enumerate}[(i)]
\item $X(\R)$ has exactly one connected component.
\item $\Pic(X_{\C})$ is a permutation $G$-module.
\item The cycle class map $\cl_{\R}:\CH_1(X)\to H_1(X(\R),\Z/2\Z)$ is surjective.
\end{enumerate}
\end{enumerate}
\end{thm}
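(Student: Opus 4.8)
The plan is to derive both implications from a single computation: for a smooth projective $\C$-rational threefold $X$ over $\R$, I would determine the kernel and cokernel of the natural map $a^i_M\colon H^i(G,M)\to H^i_{\nr}(X,M)$ for every $i\ge 0$ and every $G$-module $M$, and show that they all vanish precisely when (i), (ii) and (iii) hold. The starting point is the Bloch--Ogus machinery of \S\ref{unramcoho}: the group $H^i_{\nr}(X,M)=H^0(X,\sH^i_X(M))$ is the edge term $E_2^{0,i}$ of the coniveau spectral sequence abutting to $H^*_G(X(\C),M)$, and the latter is computed by the Hochschild--Serre spectral sequence $H^p(G,H^q(X(\C),M))\Rightarrow H^{p+q}_G(X(\C),M)$. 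Because $X_\C$ is rational, the Betti groups $H^q(X(\C),\Z)$ are explicit: $H^0$ and $H^6$ are of trivial Tate type, $H^1=H^5=0$, $H^2\cong\Pic(X_\C)$ (weight $1$), $H^4\cong\CH_1(X_\C)$ (weight $2$), and $H^3$ has coniveau $\ge 1$, being built by Gysin push-forward from the $H^1$ of blown-up curves. This last point is crucial: the odd-degree group $H^3(X(\C),\Z)$ supports no unramified classes, so it never contributes to $E_2^{0,\bullet}$ and plays no role in the conditions.

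To treat arbitrary coefficients I would perform a d\'evissage in $M$. Cohomology commutes with filtered colimits, so one reduces to finitely generated $M$, then, via $0\to M_{\tors}\to M\to M/M_{\tors}\to 0$, to the torsion and torsion-free cases. For $G=\Z/2$ every finitely generated torsion-free $\Z[G]$-module is a direct sum of copies of $\Z$, $\Z(1)$ and $\Z[G]$ (classification of integral $\Z/2$-representations), while torsion modules filter into copies of $\Z/\ell^n(j)$. The induced summands $\Z[G]$ (and their torsion analogues) are disposed of by $\C$-rationality alone: by Shapiro's lemma their equivariant unramified cohomology coincides with the ordinary unramified cohomology of $X_\C$, which vanishes in positive degree since $X_\C$ is rational, matching $H^i(G,\Z[G])=0$ for $i>0$. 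Odd torsion is immediate because $|G|=2$ is invertible. One is thus left with the coefficients $\Z$, $\Z(1)$, $\Z/2^n$ and $\Z/2^n(1)$, which through the Bockstein sequences $0\to\Z/2\to\Z/2^{n}\to\Z/2^{n-1}\to0$ and $0\to\Z\to\Z\to\Z/2\to0$ reduce the whole problem to the coefficients $M=\Z/2$ (on which, as $\Z/2(1)\cong\Z/2$, the Tate twist is trivial), together with the integral content of the Bockstein maps.

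Granting this reduction, I would compute $H^i_{\nr}(X,\Z/2)$ degree by degree. In degree $1$ there is nothing geometric ($H^1(X(\C),\Z/2)=0$), so $a^1_{\Z/2}$ is automatically an isomorphism. In degree $2$ the Hochschild--Serre contribution of $H^2(X(\C),\Z/2)=\Pic(X_\C)/2$ identifies $\Coker(a^2_{\Z/2})$ with $H^1(G,\Pic(X_\C))$ (equivalently $\Br(X)/\Br(\R)$); since $\Pic(X_\C)$ is finitely generated and torsion-free, and since for $G=\Z/2$ such a module is a permutation module if and only if $H^1(G,-)=0$, this vanishes exactly under (ii). In degree $3$ the contribution of $H^4(X(\C))=\CH_1(X_\C)$, compared through the Borel--Haefliger realisation with $H^2(X(\R),\Z/2)\cong H_1(X(\R),\Z/2)$ (Poincar\'e duality on the compact $3$-manifold $X(\R)$), identifies the obstruction with $\Coker(\cl_{\R})$, so it vanishes exactly under (iii). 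Finally, for $i>3$ the comparison between unramified cohomology and the real locus in high degrees gives $H^i_{\nr}(X,\Z/2)\cong H^0(X(\R),\Z/2)=(\Z/2)^{\#\pi_0(X(\R))}$, so $a^i_{\Z/2}$ is an isomorphism exactly when $X(\R)$ is connected, i.e. under (i). Assembling these statements and propagating them through the d\'evissage yields (2)$\Rightarrow$(1); reading the same identifications backwards---condition (ii) off $i=2$, condition (iii) off $i=3$, condition (i) off any $i>3$---yields (1)$\Rightarrow$(2). Throughout I would use the action of correspondences on the groups $H^j(X,\sH^i_X(M))$ recorded at the end of \S\ref{unramcoho} to transport classes.

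The main obstacle is the master computation itself, and within it three points. First, the high-degree comparison $H^i_{\nr}(X,\Z/2)\cong H^0(X(\R),\Z/2)$ together with the assertion that the \emph{only} surviving intermediate obstructions sit in degrees $2$ and $3$: this needs a careful analysis of the coniveau differentials on the Hochschild--Serre terms, showing that the $H^3(X(\C))$ part is annihilated and that no further classes reach $E_2^{0,i}$. Second, the exact identification of the degree-$3$ obstruction with $\Coker(\cl_{\R})$, which rests on the compatibility of the coniveau filtration with the Borel--Haefliger cycle map and on Poincar\'e duality for $X(\R)$. Third, controlling the passage from mod $2$ to integral and $2$-power coefficients through the Bockstein sequences without creating spurious kernels or cokernels; here one must verify that conditions (ii) and (iii), though phrased integrally, are genuinely detected by $2$-torsion data---which holds because $H^1(G,\Pic(X_\C))$ and the relevant cokernel are already $2$-torsion---so that the mod $2$ computation indeed suffices.
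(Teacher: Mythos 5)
Your skeleton --- degrees $\geq 4$ detect (i), degree $2$ detects (ii), degree $3$ detects (iii), degrees $0,1$ are automatic --- matches the paper's, and the direction (1)$\Rightarrow$(2) could indeed be completed along your lines (the paper gets (ii) from the Brauer group via (\ref{Brnr}) and the Hochschild--Serre sequence, and (iii) from \cite{BW}). The genuine gap is in (2)$\Rightarrow$(1), at the step you call ``propagating through the d\'evissage''. Unramified cohomology $H^i_{\nr}(X,-)=H^0(X,\sH^i_X(-))$ is \emph{not} exact in the coefficients: a short exact sequence of $G$-modules yields a long exact sequence of Zariski sheaves $\sH^\bullet_X$, but applying $H^0(X,-)$ destroys exactness, and the error terms are exactly the groups $H^1(X,\sH^i_X(-))$. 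Consequently your Bockstein reductions ($0\to\Z\to\Z\to\Z/2\to 0$, $0\to\Z/2\to\Z/2^n\to\Z/2^{n-1}\to 0$, and the torsion/torsion-free filtrations) do not transfer the isomorphism from $M=\Z/2$ to integral, twisted or $2^n$-torsion coefficients: knowing $a^i_{\Z/2}$ is bijective bounds neither the kernel nor the cokernel of $a^i_{\Z}$ unless one also controls groups such as $H^1(X,\sH^3_X(\Z))$ and $H^1(X,\sH^2_X(\Z(1)))$. This is not a removable technicality: in the paper's proof, conditions (ii) and (iii) enter precisely there --- (iii) gives $H^1(X,\sH^3_X(\Z))=0$ via \cite{BW}, rationality of $X_\C$ gives the vanishing of $H^0$ and $H^1$ of $\sH^3_X(\Z[G])$ and $H^0$ of $\sH^2_X(\Z[G])$ via \cite{CTV} --- and the whole argument is organized around resolutions $0\to A\to B\to M\to 0$ by permutation modules (resp.\ by sums of $\Z(1)$ and $\Z[G]$ for $i=2$) chosen so that the vanishings (\ref{vanishings}), (\ref{vanishing}) and the injectivity (\ref{injection}) hold, which is what makes the diagram chases on the global sections of (\ref{ABM}) and (\ref{CDM}) close. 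Your remark that the obstructions are $2$-torsion does not address this: the issue is non-exactness of $H^0(X,\sH^i_X(-))$ in $M$, not the torsion order of the obstruction groups.

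A second, related gap: the two ``identifications'' carrying your degree-by-degree computation are asserted rather than proved, and they are the deep part. In degree $3$, the identification of $\Coker(a^3_{\Z/2})$ with $\Coker(\cl_{\R})$ is essentially the content of the real integral Hodge conjecture results of \cite{BW} (Theorem 3.22 together with the coniveau analysis (5.9) and Remark 5.3 there), which require a nontrivial comparison between the coniveau filtration, equivariant cohomology and the Borel--Haefliger map; it cannot be read off from ``$H^4(X(\C))=\CH_1(X_\C)$'' in the Hochschild--Serre spectral sequence. In degree $2$, the cokernel of $a^2_{\Z/2}$ a priori only injects into $E_2^{2,1}=H^2(X,\sH^1_X(\Z/2))$ via the coniveau differential $d_2$, so equating it with $H^1(G,\Pic(X_\C))$ needs an argument (for $\Q/\Z(1)$ coefficients this is the Brauer group computation, but for $\Z/2$ it is not automatic). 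Note also that the paper's own degree-$2$ argument for general $M$ uses all three conditions --- it invokes the already-settled degree-$3$ case --- which is evidence that the clean picture ``each degree sees exactly one condition, independently of the others'' is too optimistic once one leaves $M=\Z/2$.
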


Combining Theorem~\ref{thunr3} with Proposition \ref{trivialunramcoho}, we see that (i), (ii) and (iii) are necessary conditions for the (retract) $\R$-rationality of $X$. These conditions have already been explained and discussed in the introduction. We only recall here that a $G$-module $M$ is a \textit{permutation} $G$-\textit{module} if it is isomorphic to a direct sum of copies of  $\Z$ and $\Z[G]$. We will use several times the following lemma.

\begin{lem}
\label{permulemme}
A finitely generated torsion-free $G$-module $M$ is a permutation $G$\nobreakdash-module if and only if $H^1(G,M)=0$.
\end{lem}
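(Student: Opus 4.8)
The plan is to prove both implications of this classification of finitely generated torsion-free permutation $G$-modules via $H^1$-vanishing, where $G\simeq\Z/2\Z$. First I would dispose of the easy direction: if $M$ is a permutation $G$-module, then it is a finite direct sum of copies of $\Z$ (with trivial action) and $\Z[G]$. Since $H^1(G,-)$ commutes with finite direct sums, it suffices to compute $H^1(G,\Z)=0$ (as $\Z$ is a trivial module with $G$ of order $2$, $H^1(G,\Z)=\Hom(G,\Z)=0$) and $H^1(G,\Z[G])=0$ (since $\Z[G]$ is induced, hence cohomologically trivial by Shapiro's lemma). Therefore $H^1(G,M)=0$.

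The substantive direction is the converse. Assume $M$ is finitely generated, torsion-free, and $H^1(G,M)=0$; I want to conclude $M$ is a permutation module. The natural approach is by induction on the rank of $M$, trying to split off a copy of $\Z$ or $\Z[G]$ at each step. The key structural input is that $\sigma$ (the generator of $G$) acts as an involution on the free abelian group $M$. Over $\Q$, the module $M\otimes\Q$ decomposes into $+1$ and $-1$ eigenspaces, but the obstruction is that this decomposition need not be defined over $\Z$. Concretely, I would look for a primitive element $m\in M$ fixed by $\sigma$ and try to split off the trivial summand $\Z\cdot m$; alternatively, for an element $m$ with $m,\sigma m$ spanning a copy of $\Z[G]$, split that off. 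The vanishing of $H^1(G,M)$ should be exactly what guarantees such a splitting exists integrally rather than merely rationally.

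The hard part will be controlling the induction so that the quotient module remains torsion-free and still has vanishing $H^1$. After splitting off a summand $N$ (either $\Z$ or $\Z[G]$), one must check that $M/N$ is again torsion-free (which requires $N$ to be a saturated, i.e.\ primitive, submodule) and that $H^1(G,M/N)=0$. For the latter, the long exact sequence in cohomology associated with $0\to N\to M\to M/N\to 0$ gives a portion $H^1(G,M)\to H^1(G,M/N)\to H^2(G,N)$; since $H^1(G,M)=0$, surjectivity onto $H^1(G,M/N)$ fails unless one can show the connecting map $H^1(G,M/N)\to H^2(G,N)$ vanishes, which is delicate because $H^2(G,\Z)=\Z/2\Z$ is nonzero. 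The cleanest route is instead to use that a finitely generated $G$-module over $\Z/2\Z$ with vanishing $H^1$ is automatically cohomologically trivial (by the theorem of Nakayama--Tate, since $G$ is cyclic, vanishing of $H^1$ forces vanishing of the Tate cohomology $\widehat{H}^0$ as well, by periodicity and the fact that the Herbrand quotient of a finite-rank torsion-free module is determined by the rational representation); then one invokes the classification of cohomologically trivial torsion-free $\Z[G]$-modules for $G$ cyclic, or directly that such a module is a direct summand of a permutation module and, being torsion-free of finite rank over $\Z[\Z/2\Z]$, is itself permutation by a theorem on integral representations of the group of order two. This last classification—that every $\Z[\Z/2\Z]$-lattice is a direct sum of copies of $\Z_+$, $\Z_-$, and $\Z[G]$, combined with the $H^1$-computation eliminating the sign representation $\Z_-$—is the crux, and I would expect to cite it rather than reprove it.
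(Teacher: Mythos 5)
Your closing sentence is exactly the paper's proof: the paper cites Silhol, I~(3.5.1), for the fact that every finitely generated torsion-free $G$-module is a direct sum of copies of $\Z$, $\Z(1)$ (your sign lattice $\Z_-$) and $\Z[G]$, and the lemma then follows immediately from $H^1(G,\Z)=H^1(G,\Z[G])=0$ together with $H^1(G,\Z(1))=\Z/2\Z\neq 0$, which eliminates the sign summands. Your easy direction and this final reduction are both correct, and had your proposal consisted only of these, it would match the paper's argument exactly.

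However, the route you single out as ``cleanest'' is genuinely false. For $G\simeq\Z/2\Z$, vanishing of $H^1(G,M)$ does \emph{not} make $M$ cohomologically trivial: the trivial module $M=\Z$ has $H^1(G,\Z)=\Hom(G,\Z)=0$ but $\widehat{H}^0(G,\Z)=\Z/2\Z\neq 0$. Periodicity for cyclic groups identifies $H^1$ with the Tate groups in all \emph{odd} degrees and says nothing about even degrees, and the Herbrand quotient computation in fact proves the opposite of what you claim: writing $M\otimes\Q\simeq \Q_+^a\oplus\Q_-^b$, one has $h(M)=2^{a-b}$, so the hypothesis $H^1(G,M)=0$ forces $|\widehat{H}^0(G,M)|=2^{a-b}$, which is nontrivial precisely when $a>b$ --- as happens for every permutation module containing at least one $\Z$ summand. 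The error is fatal for that route: if cohomological triviality held, then (by Nakayama's criterion and the triviality of the projective class group of $\Z[\Z/2\Z]$) $M$ would be \emph{free} over $\Z[G]$, a conclusion that is false for $M=\Z$, so no repair is possible. Discard the cohomological-triviality detour (and the inductive splitting, whose difficulty with $H^2(G,\Z)\neq 0$ you correctly diagnosed yourself) and keep only your last step: cite the classification of $\Z[\Z/2\Z]$-lattices and compute $H^1$ on each of the three indecomposables.
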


\begin{proof}
 By \cite[I (3.5.1)]{Silhol}, a finitely generated torsion-free $G$-module is a direct sum of $G$-modules isomorphic to $\Z$, $\Z(1)$ and $\Z[G]$. The lemma follows.
\end{proof}

\begin{proof}[Proof of Theorem \ref{thunr3}]
The variety $X_{\C}$ is rational, hence connected, so that all open subsets $U\subset X$ are geometrically connected. Consequently, $\sH^0_X(M)$ is the constant sheaf $H^0(G,M)$, and $H^0(G,M)\stackrel{\sim}\longrightarrow H^0_{\nr}(X,M)$.

The group $H^1(X(\C),M)$ is a birational invariant of smooth projective complex varieties. Since $X_{\C}$ is rational, it vanishes. As a consequence, the Hochschild--Serre spectral sequence \cite[(1.4)]{BW} provides an isomorphism $H^1(G,M)\stackrel{\sim}\longrightarrow H^1_G(X(\C),M)$. Combining it with the isomorphism $H^1_G(X(\C),M)\stackrel{\sim}\longrightarrow H^1_{\nr}(X,M)$ given by the coniveau spectral sequence \cite[(5.1), (5.2)]{BW} shows that $H^1(G,M)\stackrel{\sim}\longrightarrow H^1_{\nr}(X,M)$.

View $H^i(G,M)$ as a constant sheaf on $X(\R)$ for the euclidean topology and let ${\iota:X(\R)\to X}$ be the inclusion. The natural restriction map $\sH^i_X(M)\to\iota_*H^i(G,M)$ is an isomorphism if $i\geq 4$ by \cite[Proposition 5.1 (iv)]{BW}. It follows that the restriction to real points induces an isomorphism $H^i_{\nr}(X,M)\stackrel{\sim}\longrightarrow H^0(X(\R),H^i(G,M))$ for all $i\geq 4$. We deduce that if $X(\R)$ has exactly one connected component, the pull-back 
$H^i(G,M)\to H^i_{\nr}(X,M)$ is an isomorphism for any $G$-module $M$ and any $i\geq 4$, and taking $M=\Z/2\Z$ shows that the converse holds. 
 
From now on, we may assume that $X(\R)$ has exactly one connected component. In particular, the morphisms $H^i_{\nr}(X,M)\stackrel{\sim}\longrightarrow H^0(X(\R),H^i(G,M))=H^i(G,M)$ induced by restrictions to real points are retractions of the pull-back morphisms $H^i(G,M)\to H^i_{\nr}(X,M)$, showing that the latter are injective. Statement~(1) is thus tantamount to their surjectivity, or equivalently to the vanishing of
$$H^i_{\nr}(X,M)_0:=\{\alpha\in H^i_{\nr}(X,M)\mid \alpha|_{x}=0\textrm{ for all }x\in X(\R)\}.$$

Let us complete the proof that (1) implies (2): it remains to prove (ii) and~(iii). By \cite[Proposition 4.2.3 (a)]{CT} and comparison between equivariant Betti cohomology and \'etale cohomology (see \cite{Cox}, \cite[Corollary 15.3.1]{Scheiderer}), there is an isomorphism 
\begin{equation}
\label{Brnr}
\Br(X)=H^2_{\nr}(X,\Q/\Z(1)).
\end{equation}
 We deduce from (1) that $\Br(\R)\to\Br(X)$ is an isomorphism. Since $H^3(G,\C^*)=0$, the Hochschild--Serre spectral sequence 
\begin{equation}
\label{HochschildSerre}
E_2^{p,q}=H^p(G,H^q_{\et}(X_{\C},\mathbb{G}_m))\implies H^{p+q}_{\et}(X,\mathbb{G}_m)
\end{equation}
now implies that $H^1(G,\Pic(X_{\C}))=0$.
Since $X_{\C}$ is rational, $\Pic(X_{\C})$ is torsion-free and finitely generated, and Lemma \ref{permulemme} shows that $\Pic(X_{\C})$ is a permutation $G$-module, proving (ii).
By \cite[Remark 5.3 (iii)]{BW}, the vanishing of 
$H^3_{\nr}(X,\Q/\Z(2))_0$ implies that the real integral Hodge conjecture for $1$-cycles on $X$ (see \cite[Definition~2.2]{BW}) holds. In turn, this implies (iii) by \cite[Theorem 3.22]{BW}. 

It remains to prove that (2) implies (1) in degrees $i=2,3$.  Writing $M$ as the direct limit of its finitely generated sub-$G$-modules, and using the fact that sheafification and taking cohomology commute with colimits, we may assume that it is finitely generated. 

Let us first deal with $i=3$. Define $B$ to be the direct sum of one copy of $\Z[G]$ (resp.\ of $\Z$) for each element in a finite generating subset of $M$ (resp.\ of $M^G$). It is a finitely generated permutation $G$-module. By construction, the natural $G$\nobreakdash-equivariant morphism $p:B\to M$ is such that both $p$ and $p|_{B^G}:B^G\to M^G$ are surjective.  Let $A$ be the kernel of $p$. The long exact sequence of group cohomology associated with $0\to A\to B\xrightarrow{p} M\to 0$ shows that $H^1(G,A)=0$ and Lemma \ref{permulemme} implies that $A$ is a permutation $G$-module. 

Taking long exact sequences of $G$-equivariant cohomology associated with the short exact sequence $0\to A\to B\to M\to 0$ on Zariski open subsets $U\subset X$ and sheafifying gives rise to a long exact sequence of Zariski sheaves on $X$:
$$\sH^2_X(B)\to\sH^2_X(M)\to\sH^3_X(A)\to\sH^3_X(B)\to\sH^3_X(M)\to\sH^4_X(A)\to\sH^4_X(B).$$
 Since $B\otimes_{\Z}\Q\to M\otimes_{\Z}\Q$ has a $G$-equivariant section, $\sH^2_X(B\otimes_{\Z}\Q)\to\sH^2_X(M\otimes_{\Z}\Q)$ is surjective. It follows that the cokernel of $\sH^2_X(B)\to\sH^2_X(M)$ is a torsion sheaf. Since $\sH^3_X(A)$ has no torsion by \cite[Proposition 5.1 (ii)]{BW}, we deduce that $\sH^3_X(A)\to\sH^3_X(B)$ is injective:
\begin{equation}
\label{ABM}
0\to\sH^3_X(A)\to\sH^3_X(B)\to\sH^3_X(M)\to\sH^4_X(A)\to\sH^4_X(B).
\end{equation}

Since $X_{\C}$ is rational, the groups
$H^1(X,\sH^3_X(\Z[G]))=H^1(X_{\C},\sH^3_{X_{\C}}(\Z))$
and $H^0(X,\sH^3_X(\Z[G]))= H^0(X_{\C},\sH^3_{X_{\C}}(\Z))$
(see \cite[Proposition 5.1 (i)]{BW}) both vanish,
by \cite[Proposition~3.3~(iii), Proposition~3.4]{CTV}.
As $H^0(X,\sH^3_X(\Z))$ is a subgroup of the latter by \cite[Proposition 5.1 (i), (iii)]{BW},
it also vanishes.
In addition, combining \cite[Theorem 3.22]{BW} and  \cite[(5.9)]{BW} shows that (iii) implies the vanishing of $H^1(X,\sH^3_X(\Z))$.
All in all, we have proved that
\begin{equation}
\label{vanishings}
H^1(X,\sH^3_X(A))=H^0(X,\sH^3_X(B))=0.
\end{equation}
There is a commutative diagram whose first row is exact, whose second row is a complex obtained by taking global sections in (\ref{ABM}), and in which two vertical arrows are isomorphisms by the case $i=4$ already dealt with:
\begin{align*}
\xymatrix
@R=0.4cm
{
H^3(G,M)\ar^{}[r]\ar[d]^{} & H^4(G,A)\ar[r]^{}\ar[d]^{\wr} &H^4(G,B)\ar[d]^{\wr}  \\
H^3_{\nr}(X,M)\ar^{}[r]& H^4_{\nr}(X,A)\ar[r]^{}&H^4_{\nr}(X,B).
}
\end{align*}
The exactness of (\ref{ABM}) and the vanishings (\ref{vanishings}) imply that $H^3_{\nr}(X,M)\to H^4_{\nr}(X,A)$ is injective. A diagram chase then shows that $H^3(G,M)\to H^3_{\nr}(X,M)$ is surjective, which is what we needed to prove.

It remains to settle the case $i=2$ when $M$ is finitely generated. Applying the above arguments to $M(1)$ instead of $M$, we find a short exact sequence of $G$-modules $0\to C\to D\to M\to 0$, where $C$ and $D$ are finite direct sums of $G$-modules isomorphic to $\Z(1)$ or $\Z[G]$, giving rise to a long exact sequence:
\begin{equation}
\label{CDM}
0\to\sH^2_X(C)\to\sH^2_X(D)\to\sH^2_X(M)\to\sH^3_X(C)\to\sH^3_X(D).
\end{equation}
The group $H^0(X,\sH^2_X(\Z[G]))= H^0(X_{\C},\sH^2_{X_{\C}}(\Z))$ (see \cite[Proposition 5.1 (i)]{BW}) vanishes by \cite[Proposition 3.3 (i)]{CTV} because $X_{\C}$ is rational. Since $H^0(X,\sH^2_X(\Z(1)))$ is a subgroup of it by \cite[Proposition 5.1 (iii)]{BW}, this group also vanishes. We deduce:
\begin{equation}
\label{vanishing}
H^2_{\nr}(X,D)=0.
\end{equation}
Both natural morphisms $(\Z[G]\otimes_{\Z}D)^G\to (\Z[G]\otimes_{\Z}M)^G$ and $D(1)^G\to M(1)^G$ are surjective, the first one because it can be identified with $D\to M$, and the second one because $H^1(G,C(1))=0$.
Since $X_{\C}$ is rational, $\Pic(X_{\C})\simeq H^2(X(\C),\Z(1))$, which is a permutation $G$-module by (ii). It follows that $[H^2(X(\C),\Z)\otimes D]^G\to[H^2(X(\C),\Z)\otimes M]^G$ is surjective. Since $X_{\C}$ is rational, its Artin--Mumford invariant $H^3(X(\C),\Z)_{\tors}$ vanishes \cite[Proposition 1]{AM} and we deduce from the universal coefficient theorem \cite[5.5 Theorem 10]{Spanier}
that $H^2(X(\C),D)^G\to H^2(X(\C),M)^G$ is surjective. For a $G$-module $N$, let us consider the Hochschild--Serre spectral sequence \cite[(1.4)]{BW}:
 $$E_2^{p,q}=H^p(G,H^q(X(\C),N))\implies H^{p+q}_G(X(\C),N).$$
We have seen above that $H^1(X(\C),N)=0$, and restricting to a real point shows that the edge maps $H^i(G,N)\to H^{i}_G(X(\C),N)$ are injective. Applying this to $N=D$ and $N=M$ gives rise to a commutative diagram with exact row:
\begin{align*}
\xymatrix
@R=0.4cm
{
& & H^2_{G}(X(\C),D)\ar[r]^{\sim}\ar[d]^{} &H^2(X(\C),D)^G\ar@{->>}[d] & \\
0\ar[r]&H^2(G,M)\ar^{}[r]& H^2_{G}(X(\C),M)\ar[r]^{}&H^2(X(\C),M)^G\ar[r]& 0.
}
\end{align*}
Since $X(\R)$ is connected, we deduce from the diagram above an isomorphism 
$H^2_G(X(\C),M)_0\stackrel{\sim}\longrightarrow H^2(X(\C),M)^G$, where we set, for all $G$-modules $N$ and $i\geq 0$:
$$H^i_{G}(X(\C),N)_0:=\{\alpha\in H^i_{G}(X(\C),N)\mid \alpha|_{x}=0\textrm{ for all }x\in X(\R)\}.$$
The image of $H^2_{G}(X(\C),D)$ in $H^2_{G}(X(\C),M)$ is contained in $H^2_G(X(\C),M)_0$, and we get a surjection 
$H^2_{G}(X(\C),D)\twoheadrightarrow H^2_G(X(\C),M)_0$.
From the long exact sequence of equivariant cohomology, we deduce an injection $H^3_G(X(\C),C)_0\hookrightarrow H^3_G(X(\C),D)_0$. 
The coniveau spectral sequence \cite[(5.1), (5.2)]{BW} yields, for any $G$\nobreakdash-module~$N$, an injection $H^1(X,\sH^2_X(N))\hookrightarrow H^3_G(X(\C),N)$ whose image has coniveau $\geq 1$, hence belongs to $H^3_G(X(\C),N)_0$ (indeed, since $X$ has a smooth $\R$-point, the implicit function theorem shows that $X(\R)$ is Zariski dense in $X$). Applying it to $N=C$ and $N=D$, we get an injection:
\begin{equation}
\label{injection}
H^1(X,\sH^2_X(C))\hookrightarrow H^1(X,\sH^2_X(D)).
\end{equation}

There is a commutative diagram whose first row is exact, whose second row is a complex obtained by taking global sections in (\ref{CDM}), and in which two vertical arrows are isomorphisms by the case $i=3$ already dealt with:
\begin{align*}
\xymatrix
@R=0.4cm
{
H^2(G,M)\ar^{}[r]\ar[d]^{} & H^3(G,C)\ar[r]^{}\ar[d]^{\wr} &H^3(G,D)\ar[d]^{\wr}  \\
H^2_{\nr}(X,M)\ar^{}[r]& H^3_{\nr}(X,C)\ar[r]^{}&H^3_{\nr}(X,D).
}
\end{align*}
The exactness of (\ref{CDM}), the vanishing (\ref{vanishing}) and the injectivity of (\ref{injection}) imply that $H^2_{\nr}(X,M)\to H^3_{\nr}(X,C)$ is injective. A diagram chase now shows that $H^2(G,M)\to H^2_{\nr}(X,M)$ is surjective, which is what we needed to prove.
\end{proof}

\section{Examples of real threefolds}
\label{secexR}

We finally combine the results of the previous sections to study in detail interesting examples of real threefolds that are $\C$-rational but not $\R$-rational.

In \S\S\ref{reallocus}--\ref{subtrivial}, we consider a variety $X$ defined as in (\ref{coniceq}) with $k=\R$, $\alpha=-1$,  $S=\bP^2_{\R}$ and $\sL=\sO_{\bP^2_{\R}}(d)$ for some $d\geq 1$. It has equation $X=\{s^2+t^2=u^2F\}$ for a homogeneous polynomial $F\in H^0(\bP^2_{\R},\sO_{\bP^2_{\R}}(2d))=\R[x,y,z]_{2d}$ defining a smooth plane curve $C:=\{F=0\}\subset\bP^2_{\R}$. We let $p|_X:X\to \bP^2_{\R}$ be the projection.

\subsection{Set of real points}
\label{reallocus}

It is easy to find such examples for which $X(\R)$ is diffeomorphic to the real locus of a smooth projective $\R$-rational variety.

\begin{prop}
\label{propreallocus}
If $F$ is positive 
on $\R^3\setminus\{0\}$, then $X(\R)$ is diffeomorphic to the real locus of a smooth projective $\R$-rational variety:
\begin{enumerate}[(i)]
\item $X(\R)\simeq \bS^1\times\bP^2(\R)$, if $d$ is even.
\item $X(\R)\simeq(\bS^1\times\bS^2)/(\Z/2\Z)$, where $\Z/2\Z$ acts diagonally by the antipodal involution on both factors, if $d$ is odd.
\end{enumerate}
\end{prop}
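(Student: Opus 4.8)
The plan is to identify $X(\R)$ with the total space of a circle bundle over $\bP^2(\R)$ and then to recognise that bundle explicitly. Since $F$ is positive on $\R^3\setminus\{0\}$, the curve $C=\{F=0\}$ has no real point, so the restriction of $F$ is a nowhere-vanishing section of $\sO(2d)$ over $\bP^2(\R)$; in particular $F>0$ at every real point, and there the real fibre of $p|_X\colon X(\R)\to\bP^2(\R)$ is the set of real points of the conic $\{s^2+t^2=u^2F\}$, which does not meet $\{u=0\}$ and is therefore a circle. Working in the chart $\{u\neq0\}$ and setting $\tilde s=s/u$ and $\tilde t=t/u$, which are sections of $p^*\sO(d)$, the equation becomes $\tilde s^{2}+\tilde t^{2}=F$. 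I would then read the left-hand side as a fibrewise positive-definite quadratic form, with values in $\sO(2d)|_{\bP^2(\R)}$, on the rank-$2$ real vector bundle $V:=\big(\sO(d)|_{\bP^2(\R)}\big)^{\oplus2}$; dividing by the positive trivialising section $F$ turns it into a Euclidean metric on $V$, and so identifies $X(\R)$ with the unit sphere bundle $S(V)$ up to diffeomorphism.

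It then remains to compute $S(V)$, and this splits according to the parity of $d$. The real line bundle $\sO(d)|_{\bP^2(\R)}$ is classified by $w_1=d\cdot w_1(\sO(1))\in H^1(\bP^2(\R),\Z/2\Z)\simeq\Z/2\Z$, hence is trivial when $d$ is even and is the nontrivial line bundle $\gamma$ (the restriction of the tautological bundle, up to isomorphism) when $d$ is odd. When $d$ is even, $V$ is trivial and $S(V)\simeq\bP^2(\R)\times\bS^1=(\bP^1\times\bP^2)(\R)$, which gives (i) together with the witnessing smooth projective $\R$-rational variety $\bP^1_\R\times\bP^2_\R$.

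For (ii) I would pull $V$ back along the antipodal double cover $\pi\colon\bS^2\to\bP^2(\R)$. As $\bS^2$ is simply connected, $\pi^*\gamma$ is trivial — explicitly, the tautological section $q\mapsto q$ trivialises it — so $\pi^*V$ is trivial and $S(\pi^*V)\simeq\bS^2\times\bS^1$; thus $X(\R)\simeq S(V)\simeq(\bS^2\times\bS^1)/(\Z/2\Z)$ for the deck action. The crux is to pin down this action on the circle factor: since passing from the fibre of $\pi^*\gamma$ at $q$ to that at $-q$ identifies the same line $\R q$ but flips the tautological basis vector $q\mapsto-q$, the deck transformation sends $(q,\lambda)\mapsto(-q,-\lambda)$ on each $\gamma$-summand, hence acts by $(q,w)\mapsto(-q,-w)$ on $\bS^2\times\bS^1$, i.e.\ diagonally by the antipodal involution on both factors. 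This yields the description in (ii). Finally, applying the very same computation to $d=1$ and $F=x^2+y^2+z^2$ shows that $(\bS^2\times\bS^1)/(\Z/2\Z)$ is the real locus of the conic bundle $\{s^2+t^2=u^2(x^2+y^2+z^2)\}$, a smooth projective $\R$-rational threefold (being birational to the smooth quadric $\{s^2+t^2=x^2+y^2+z^2\}\subset\bP^4_\R$, which has the real point $[1:0:1:0:0]$); this exhibits the required $\R$-rational model.

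The main obstacle I anticipate is the careful bookkeeping of the line-bundle twists in the first step — making the passage from $\{s^2+t^2=u^2F\}$ to an honest unit sphere bundle rigorous — and, above all, the sign computation in the odd case, since it is precisely the flip of the tautological trivialisation under the antipodal map that distinguishes the quotient $(\bS^2\times\bS^1)/(\Z/2\Z)$ from the trivial product $\bS^2\times\bS^1$.
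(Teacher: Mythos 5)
Your proposal is correct and takes essentially the same route as the paper: both arguments realize $X(\R)$ as the circle bundle associated with $\mathbb{L}\oplus\mathbb{L}$ (where $\mathbb{L}$ is the real line bundle underlying $\sO_{\bP^2_\R}(d)$), pull back along the double cover $\bS^2\to\bP^2(\R)$ to trivialize it, identify the deck action on the circle factor as multiplication by $(-1)^d$, and exhibit the same two $\R$-rational witnesses ($\bP^1_\R\times\bP^2_\R$ for $d$ even, and the conic bundle with $d=1$, $F=x^2+y^2+z^2$, birational to a quadric with a real point, for $d$ odd). The only difference is bookkeeping: the paper packages the trivialization and the metric normalization into a single square-root section $G$ of $\mu^*F$ on $\bS^2$ satisfying $\varphi^*G=(-1)^d G$, whereas you normalize by dividing by $F$ and trivialize via the $w_1$-classification of real line bundles together with the tautological section, which yields the identical sign computation.
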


\begin{proof}
Let $\mu:\bS^2\to \bP^2(\R)$ be the double cover with Galois group $\Z/2\Z=\{1,\varphi\}$, where $\varphi$ is the antipodal involution of $\bS^2$. Let $\mathbb{L}$ be the $\ci$ real line bundle on $\bP^2(\R)$ associated with $\mathcal{L}=\sO_{\bP^2_{\R}}(d)$. Since $F>0$ and $\bS^2$ is simply connected, there exists a section $G\in H^0(\bS^2,\mu^*\mathbb{L})$ such that $G^2=\mu^*F$, unique up to a sign. Hence $\phi^*G=\varepsilon G$ for some sign $\varepsilon=\pm 1$. Since $\mathbb{L}$ is trivial if and only if $d$ is even,  $\varepsilon=(-1)^d$.

Using the identifications $\bS^1=\{a^2+b^2=1\}$ and $\bS^2=\{x^2+y^2+z^2=1\}$, we deduce that the map $\bS^1\times\bS^2\to X(\R)$  induced by $s=aG(x,y,z)$, $t=bG(x,y,z)$, $u=1$ realizes $X(\R)$ as a quotient of $\bS^1\times\bS^2$ by a diagonal action of $\Z/2\Z$: via the antipodal involution on $\bS^2$ and multiplication by $(-1)^d$ on $\bS^1$.

When $d$ is even, one gets $X(\R)\simeq(\bP^1\times\bP^2)(\R)$. Applying the construction to $d=1$ and $F=x^2+y^2+z^2$ shows that the diagonal quotient $(\bS^1\times\bS^2)/(\Z/2\Z)$ appearing when $d$ is odd is diffeomorphic to the real locus of the smooth projective variety $\{s^2+t^2=u^2(x^2+y^2+z^2)\}$, which is birational to the smooth affine quadric with an $\R$-point $\{s^2+t^2=x^2+y^2+1\}$, hence is $\R$-rational.
\end{proof}

\subsection{Unirationality} Some of the examples we consider are also $\R$-unirational:

\begin{prop}
\label{propunirat}
Suppose that $d=2$. Then $X$ is $\R$-unirational if and only if $F$ is not negative definite on $\R^3\setminus\{0\}$.
\end{prop}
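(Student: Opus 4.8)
The plan is to prove the two implications separately, relating the $\R$-unirationality of $X$ to the positivity of $F$: the easy direction will come from an inspection of the real locus, and the hard direction from the unirationality of a del Pezzo surface of degree $2$ sitting inside $X$, transferred to $X$ by the conic-bundle argument already used in Example~\ref{extexte}.

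First I would treat necessity. If $X$ is $\R$-unirational, a dominant rational map $\bP^N_\R \dashrightarrow X$ is a morphism on a dense open $U\subset\bP^N_\R$; since $U(\R)$ is Zariski-dense in $\bP^N_\R$ and maps into $X(\R)$ via $p|_X:X\to\bP^2_\R$, we obtain $X(\R)\neq\emptyset$. Now a real point of $X$ lies over a real point $[x:y:z]$ of $\bP^2_\R$ and has coordinates $[s:t:u]$ with $s^2+t^2=u^2F(x,y,z)$; as $s^2+t^2\geq 0$ and $[s:t:u]\neq[0:0:0]$, one must have $u\neq 0$, whence $F(x,y,z)\geq 0$. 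Thus $X(\R)\neq\emptyset$ forces $F$ to take a nonnegative value on $\R^3\setminus\{0\}$, that is, $F$ is not negative definite.

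For the converse I would first upgrade ``not negative definite'' to ``$F(P_0)>0$ for some $P_0\in\bP^2(\R)$'': if $F$ took only values $\leq 0$ yet vanished at a real $v_0\neq 0$, then $v_0$ would be a real maximum of $F$, so $\nabla F(v_0)=0$, contradicting the smoothness of $C=\{F=0\}$. Then I would introduce $T:=X\cap\{t=0\}=\{s^2=u^2F\}$, which via $p|_X$ is the double cover of $\bP^2_\R$ branched along the smooth plane quartic $C$ (here $d=2$), hence a smooth del Pezzo surface of degree $2$ over~$\R$. Over a point $P_0$ with $F(P_0)>0$ the surface $T$ acquires a real point off the ramification; since $\{F>0\}$ is a nonempty open subset of $\bP^2(\R)$ whereas the finitely many exceptional curves of $T$ have lower-dimensional real loci, a general such point avoids them, so $T$ has a real point in general position. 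I would then invoke the unirationality of del Pezzo surfaces of degree $2$ possessing such a point (the same type of input as in Example~\ref{extexte}) to conclude that $T$ is $\R$-unirational.

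Finally I would transfer unirationality from $T$ to $X$ exactly as in Example~\ref{extexte}. As $p|_T:T\to S=\bP^2_\R$ is a finite surjection, the base change $X\times_ST\to X$ is dominant, while $X\times_ST\to T$ is a conic bundle admitting the section induced by the inclusion $T\hookrightarrow X$ over~$S$; a conic bundle with a rational section over $T$ is birational to $\bP^1_T$, hence is $\R$-unirational once $T$ is. Therefore $X$ is dominated by the $\R$-unirational variety $X\times_ST$, and is itself $\R$-unirational. I expect the only genuine difficulty to be the unirationality of $T$: this is the one step resting on an external result about del Pezzo surfaces of degree $2$ rather than on the formal conic-bundle manipulations, and it is where the existence and general position of the chosen real point must be justified carefully.
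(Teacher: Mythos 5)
Your proposal is correct and follows essentially the same route as the paper: necessity from the (non)emptiness of $X(\R)$, and sufficiency from the degree-$2$ del Pezzo surface $T\subset X$ (the paper's $\{s=0\}$, your $\{t=0\}$) together with the conic-bundle base-change argument of Example~\ref{extexte}. The only cosmetic difference is that the paper gets the hypothesis of Manin's unirationality theorem for degree-$2$ del Pezzo surfaces (Theorem 29.4 of \emph{Cubic forms}) from Zariski density of $T(\R)$ via the implicit function theorem, whereas you exhibit a general-position real point directly by first upgrading to a point where $F>0$.
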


\begin{proof}
If $F$ is negative definite on $\R^3\setminus\{0\}$, then $X(\R)=\varnothing$ so that $X$ cannot be $\R$\nobreakdash-unirational. 

Otherwise, consider the surface $T\subset X$ defined by $\{s=0\}$. The equation $T=\{t^2=u^2F(x,y,z)\}$ shows that it is a smooth degree $2$ del Pezzo surface with a real point. By the implicit function theorem, the real points of $T$ are actually Zariski dense in $T$. It follows from the work of Manin \cite[Theorem 29.4]{Manin} that $T$ is $\R$-unirational. Base changing the conic bundle $X\to \bP^2_{\R}$ by the projection $T\to \bP^2_{\R}$ and base changing it further by a unirational parametrization of $T$, one obtains a conic bundle over an $\R$\nobreakdash-rational surface with a rational section, that is, an $\R$\nobreakdash-rational threefold dominating~$X$. This shows that $X$ is $\R$-unirational.
\end{proof}

\subsection{Examples with trivial unramified cohomology}
\label{subtrivial}

It is also not hard to decide when $X$ has trivial unramified cohomology in the sense of Theorem \ref{thunr3} (1).

\begin{prop}
\label{trivialunrcoho}
The variety $X$ has the property that $H^i(G,M)\stackrel{\sim}\longrightarrow H^i_{\nr}(X,M)$
for any $i\geq 0$ and any $G$-module $M$ if and only if $X(\R)$ is connected and non-empty.
\end{prop}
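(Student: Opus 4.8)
The plan is to deduce the statement directly from Theorem~\ref{thunr3}. Since here $\alpha=-1$, one may take $\beta=\sqrt{-1}$, so that $k(\beta)=\C$; as $\bP^2_\R$ is $\C$-rational, Lemma~\ref{ratsqrt} shows that $X$ is $\C$-rational, and Theorem~\ref{thunr3} applies. That theorem asserts that statement~(1) — which is exactly the property appearing in the present proposition — is equivalent to the conjunction of conditions~(i), (ii) and~(iii). Condition~(i) is precisely the assertion that $X(\R)$ has one connected component, i.e.\ is connected and non-empty. Hence it suffices to prove that conditions~(ii) and~(iii) hold for \emph{every} admissible $F$ (i.e.\ whenever $C$ is smooth), so that only~(i) remains constraining; the proposition then follows from Theorem~\ref{thunr3}.

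Condition~(iii) is immediate: the projection $p|_X\colon X\to\bP^2_\R$ realises $X$ as a conic bundle over the $\C$-rational surface $\bP^2_\R$, so the surjectivity of $\cl_\R$ follows from \cite[Corollary~6.5]{BW2}.

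The substance lies in condition~(ii), namely that $\Pic(X_\C)$ is a permutation $G$-module. Over $\C$ the form $s^2+t^2$ splits; setting $s'=s+\sqrt{-1}\,t$ and $t'=s-\sqrt{-1}\,t$ brings the equation to the diagonal form $X_\C=\{s't'=u^2F\}$, a conic bundle over $\bP^2_\C$ whose two rulings over $C_\C$ are globally defined. This yields two sections $\Sigma_1=\{s'=u=0\}$ and $\Sigma_2=\{t'=u=0\}$ of $p|_{X_\C}$, each isomorphic to $\bP^2$. Writing $H:=(p|_X)^*\sO_{\bP^2}(1)$, I would claim that $\Pic(X_\C)=\Z H\oplus\Z[\Sigma_1]\oplus\Z[\Sigma_2]$, with complex conjugation $\sigma$ fixing $H$ and exchanging $[\Sigma_1]$ and $[\Sigma_2]$. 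Granting this, $\Pic(X_\C)\simeq\Z\oplus\Z[G]$ is a permutation module and we are done. (That $\Pic(X_\C)$ is free of rank $3$ comes from the description of $X_\C\simeq Y_\C$ as the blow-up of a $\bP^1$-bundle over $\bP^2$ given in the proof of Proposition~\ref{conicbundles}.)

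To verify the claim: since $\sigma$ conjugates all coordinates and $\overline{\sqrt{-1}}=-\sqrt{-1}$, one checks that $\sigma$ interchanges the loci $\{s'=0\}$ and $\{t'=0\}$, whence $\sigma[\Sigma_1]=[\Sigma_2]$, while $H$ is defined over $\R$ and is fixed. The main obstacle is the integrality assertion that $\{H,[\Sigma_1],[\Sigma_2]\}$ is an honest $\Z$-basis rather than a finite-index system: the naive relations — e.g.\ $[\{s'=0\}]=2[\Sigma_1]+[R_1]$, where $R_1$ is the ruling divisor over $C$ and the factor $2$ reflects the order-two vanishing of $s'$ along $\Sigma_1$ — carry factors of $2$ and are treacherous to chase. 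I would instead settle this by intersection theory: pairing $H,[\Sigma_1],[\Sigma_2]$ against a general fibre $f\simeq\bP^1$, a ruling line $c$ over a point of $C$, and a line $\tilde\ell\subset\Sigma_1$, one computes $H\cdot f=H\cdot c=0$, $H\cdot\tilde\ell=1$, $[\Sigma_i]\cdot f=1$, $[\Sigma_1]\cdot c=1$, $[\Sigma_2]\cdot c=0$, so that with $a:=[\Sigma_1]\cdot\tilde\ell$ and $b:=[\Sigma_2]\cdot\tilde\ell$ the resulting matrix of intersection numbers has determinant
\[
\det\!\begin{pmatrix}1&0&0\\ a&1&1\\ b&1&0\end{pmatrix}=-1,
\]
independently of $a$ and $b$. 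A unimodular pairing against curves forces $\{H,[\Sigma_1],[\Sigma_2]\}$ to span a rank-$3$ direct summand, hence all of $\Pic(X_\C)$. Combined with the computed $\sigma$-action this establishes~(ii), and together with~(i) and~(iii) the equivalence in Theorem~\ref{thunr3} gives the proposition.
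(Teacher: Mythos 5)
Your proof is correct, and its skeleton is the same as the paper's—reduce everything to Theorem~\ref{thunr3} and dispose of condition (iii) via the conic-bundle results of \cite{BW2}, so that only (i) and (ii) are at stake—but your verification of condition (ii) takes a genuinely different route. The paper works with the $\R$-form of the equation: it exhibits the $G$-stable subgroup $\langle \sO_{\bP^2_\C}(1),E\rangle\simeq\Z\oplus\Z(1)$ of $\Pic(X_\C)$ with quotient $\Z$ given by the degree on the generic fibre, uses the identification $\Pic(X)=\Pic(X_\C)^G$ (which requires $X(\R)\neq\varnothing$, via \cite[8.1/4]{BLR}) together with the fact that the generic fibre is a non-trivial conic, hence carries no line bundle of odd degree, to deduce $H^1(G,\Pic(X_\C))=0$, and concludes by Lemma~\ref{permulemme}. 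You instead diagonalize over $\C$ and exhibit an explicit permutation basis $\{H,[\Sigma_1],[\Sigma_2]\}$: your intersection numbers are correct (with $c$ the ruling line lying on $\{s'=0\}$; the other ruling swaps two entries and still gives determinant $\pm1$), and the unimodularity-plus-saturation argument is sound because $\Pic(X_\C)\simeq\Z^3$ is torsion-free, so the three classes do form a $\Z$-basis on which $\sigma$ fixes $H$ and exchanges $[\Sigma_1]$ and $[\Sigma_2]$. (As a consistency check, $[E]=dH+[\Sigma_1]-[\Sigma_2]$, so your basis refines the paper's exact sequence $0\to\Z\oplus\Z(1)\to\Pic(X_\C)\to\Z\to 0$.) As for what each approach buys: yours proves (ii) with no hypothesis on $X(\R)$, so that (ii) and (iii) hold for every admissible $F$ and only (i) is constraining—logically slightly cleaner than the paper, which verifies (ii) only when $X(\R)\neq\varnothing$ (harmless, since that hypothesis is contained in (i)); the paper's argument, on the other hand, requires no divisor bookkeeping or integrality checks, is the one that generalizes when the two rulings admit no visible sections (this is the spirit of its first suggested route, via \cite[Proof of Proposition~2.1]{ctsolidepoonen}), and makes transparent the link between condition (ii) and the non-triviality of the Brauer class of the generic conic.
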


\begin{proof}
It suffices to show that $X$ satisfies conditions (ii) and (iii) of Theorem \ref{thunr3}~(2) if $X(\R)$ is non-empty.
That condition (iii) holds may be obtained as a combination of \cite[Theorem~6.1]{BW2}  and \cite[Theorem 3.22]{BW}.

To verify~(ii), one may argue as in \cite[Proof of Proposition~2.1]{ctsolidepoonen} (taking $B=\bP^2_\R$ in \emph{loc.\ cit.}).
Alternatively, recall from the proof of Proposition \ref{conicbundles} (iii) that one may write $p|_{X_{\C}}:X_{\C}\to \bP^2_{\C}$ as the composition of the blow-up of a smooth connected curve $X_{\C}\to W$ with exceptional divisor $E=\{F=s-t\sqrt{-1}=0\}\subset X_{\C}$ and of a $\bP^1$\nobreakdash-bundle $W\to \bP^1_{\C}$. We deduce that $\Pic(X_{\C})$ has rank $3$ and is generated by $\sO_{\bP^2_{\C}}(1)$, by $E$ and by any line bundle that has degree one on the generic fiber of $p|_{X_{\C}}:X_{\C}\to \bP^2_{\C}$. If $\sigma(E)=\{F=s+t\sqrt{-1}=0\}\subset X_{\C}$ is the image of $E$ by the complex conjugation $\sigma$, then $E\cup \sigma(E)=(p|_{X_{\C}})^{-1}(C_{\C})$. Consequently, the subgroup $\langle \sO_{\bP^2_{\C}}(1), E\rangle\subset \Pic(X_{\C})$ is $G$-stable and one computes that it is isomorphic to the $G$-module $\Z\oplus\Z(1)$. We have obtained a short exact sequence of $G$-modules:
$$0\to \Z\oplus\Z(1)\to\Pic(X_{\C})\to\Z\to 0,$$
where the projection $\Pic(X_{\C})\to\Z$ computes the degree on the generic fiber of $p_{\C}:X_{\C}\to\bP^2_{\C}$. Since $X(\R)\neq\varnothing$, one has $\Pic(X_{\C})^G=\Pic(X)$ by \cite[8.1/4]{BLR},
and the long exact sequence of $G$-cohomology yields:
\begin{equation}
\label{Picext}
0\to\Z\to\Pic(X)\to \Z\to \Z/2\Z\to H^1(G,\Pic(X_{\C}))\to 0.
\end{equation}
The generic fiber of $p|_X$ is a non-trivial conic as it has non-trivial ramification above~$C$. Consequently, there is no line bundle on $X$ that has degree $1$ on the generic fiber of $p|_X$. We deduce from (\ref{Picext}) that $H^1(G,\Pic(X_{\C}))=0$, hence that $\Pic(X_{\C})$ is a permutation $G$-module by Lemma \ref{permulemme}.
\end{proof}

Combining the results obtained so far, we get:

\begin{thm}
\label{premierexemple}
There exists a smooth projective threefold $X$ over $\R$ that is not $\R$-rational, but is $\C$-rational, $\R$-unirational, and is such that $X(\R)$ is diffeomorphic to $(\bP^1\times\bP^2)(\R)$ and that for any $G$-module $M$ and $i\geq 0$,  $H^i(G,M)\stackrel{\sim}\longrightarrow H^i_{\nr}(X,M)$.
\end{thm}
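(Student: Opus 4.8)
The plan is to produce a single variety of the type introduced at the start of this section, chosen so that each of the five required properties is the conclusion of one of the propositions already proved. First I would fix $d=2$ and pick for $F$ a positive definite quartic form cutting out a smooth plane curve, the cleanest choice being the Fermat quartic $F=x^4+y^4+z^4$; it is immediate that $F>0$ on $\R^3\setminus\{0\}$ and that $C:=\{F=0\}\subset\bP^2_\R$ is smooth, since $4x^3,4y^3,4z^3$ have no common zero in $\bP^2$. Taking $k=\R$, $\alpha=-1$ and $\beta=\sqrt{-1}\notin\R$ (so that $k(\beta)=\C$), the resulting variety $X=\{s^2+t^2=u^2F\}$ is exactly the one studied throughout \S\S\ref{reallocus}--\ref{subtrivial}.

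Each property is then a direct citation. The $\C$\nobreakdash-rationality and the failure of $\R$\nobreakdash-rationality both follow from Corollary \ref{coroexemple}: with the above data $X$ is $k(\beta)=\C$-rational but not $\R$-rational, because $C_\C$ is a smooth plane quartic, hence connected, non-hyperelliptic, of genus $3\geq 2$, and $\beta\notin\R$. The $\R$\nobreakdash-unirationality follows from Proposition \ref{propunirat}, whose hypotheses $d=2$ and ``$F$ not negative definite'' hold since $F$ is positive definite. The diffeomorphism type of the real locus follows from Proposition \ref{propreallocus}~(i): as $d=2$ is even and $F>0$ on $\R^3\setminus\{0\}$, one gets $X(\R)\simeq\bS^1\times\bP^2(\R)=(\bP^1\times\bP^2)(\R)$. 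Finally, the triviality of all unramified cohomology groups follows from Proposition \ref{trivialunrcoho}, whose criterion is precisely that $X(\R)$ be connected and non-empty, which is visible from the real locus just computed.

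I do not expect a genuine obstacle here: the content of the statement is that the five conditions are mutually compatible for one variety, and the only point needing care is that the constraints imposed by the separate propositions can be met simultaneously. The binding constraint is $d=2$, forced by the unirationality criterion of Proposition \ref{propunirat}; once $d=2$ is fixed, choosing $F$ positive definite makes $d$ even with $F>0$ (so Proposition \ref{propreallocus}~(i) applies), makes $F$ not negative definite (so Proposition \ref{propunirat} applies), and forces $X(\R)$ to be non-empty and connected (so Proposition \ref{trivialunrcoho} applies), while the degree $2d=4$ automatically yields a non-hyperelliptic curve of genus $\geq 2$ for Corollary \ref{coroexemple}. Thus the whole theorem collapses to the single observation that $d=2$ together with $F=x^4+y^4+z^4$ simultaneously triggers Corollary \ref{coroexemple} and Propositions \ref{propunirat}, \ref{propreallocus} and \ref{trivialunrcoho}.
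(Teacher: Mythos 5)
Your proposal is correct and coincides with the paper's own proof: the authors also take $d=2$, $\alpha=-1$, $F=x^4+y^4+z^4$ (or any positive quartic defining a smooth plane curve) and deduce the five properties from Corollary \ref{coroexemple} and Propositions \ref{propreallocus}, \ref{propunirat} and \ref{trivialunrcoho}. Your verification that the hypotheses of these four results can be met simultaneously is exactly the content of the paper's argument.
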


\begin{proof}
Let $F(x,y,z)\in H^0(\bP^2_{\R},\sO(4))$ be a polynomial that is positive on $\R^3\setminus\{0\}$, and that defines a smooth plane curve (one may take $F(x,y,z)=x^4+y^4+z^4$). The smooth projective variety defined by the equation $X:=\{s^2+t^2=u^2F\}$ as in (\ref{coniceq}) 
has the required properties by Corollary \ref{coroexemple} and Propositions \ref{propreallocus}, \ref{propunirat} and~\ref{trivialunrcoho}.
\end{proof}

\begin{rem}
The variety defined by the equation $X:=\{s^2+t^2=u^2F(x,y,z)\}$ with $F(x,y,z)=x^4-y^4-z^4$ also satisfies the requirements of Theorem~\ref{premierexemple} (except that its real locus is diffeomorphic to the sphere $\bS^3$, hence to the real locus of an $\R$-rational quadric), by Corollary \ref{coroexemple} and Propositions \ref{propunirat} and \ref{trivialunrcoho}.
In this particular example, some arguments may be simplified. In the proof of Proposition \ref{propunirat},  $T(\R)$ is diffeomorphic to a sphere $\bS^2$, and Comessatti's theorem (\cite[pp.\ 54-55]{Comessatti}, see \cite[VI Corollary 6.5]{Silhol}) shows at once that $T$ is $\R$-rational. In the proof of Proposition \ref{trivialunrcoho}, the verification of condition (iii) is immediate as $H_1(X(\R),\Z/2\Z)=H_1(\bS^3,\Z/2\Z)=0$.
\end{rem}

\begin{rem}
\label{remAMnon}
In Theorem \ref{premierexemple}, the assertion that $H^i(G,M)\stackrel{\sim}\longrightarrow H^i_{\nr}(X,M)$ shows that it is not possible to prove that $X$ is not $\R$-rational using Proposition \ref{trivialunramcoho}.
We do not know if $X$ is retract $\R$-rational, or stably $\R$-rational, or if it is universally $\CH_0$-trivial. 
\end{rem}

\subsection{Examples with non-trivial unramified cohomology}
\label{subnontrivial}

To contrast with Theorem \ref{premierexemple}, we give an example of a smooth projective $\C$-rational threefold $X$ over~$\R$ 
that may be proved not to be $\R$-rational using Proposition \ref{trivialunramcoho}, but not using Corollary \ref{CGk}.
Since examples failing condition (i) of Theorem \ref{thunr3} are classical, we restrict to varieties whose real locus is non-empty and connected. In view of the discussion below the statement of Theorem \ref{thunr3intro}, it is not expected that there are such examples for which~(iii) fails. Consequently, we focus on condition (ii). 

\begin{thm}
\label{secondexemple}
There exists a smooth projective threefold $X$ over $\R$ that is not retract $\R$-rational, but is $\C$-rational, $\R$-unirational, whose real locus is diffeomorphic to that of a smooth projective $\R$-rational variety,
and such that $J^3X=0$.
\end{thm}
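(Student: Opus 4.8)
The plan is to realise $X$ as a conic bundle $\pi\colon X\to S$ over a smooth projective $\C$-rational surface $S$ over $\R$, in the spirit of Section~\ref{secconic} but deliberately outside the family $\{s^2-\alpha t^2=u^2F\}$ of Section~\ref{secexR}: by Proposition~\ref{trivialunrcoho} every member of that family with connected real locus has \emph{trivial} unramified cohomology, and by the remark following Corollary~\ref{coroexemple} it becomes $\R$-rational as soon as its degeneration curve has genus~$0$. I would choose the degeneration curve $C\subset S$ of $\pi$ to be geometrically of genus~$0$ (its components rational over $\C$); then the associated Prym contribution vanishes, so $H^3_{\et}(X_\C,\Q_\ell)=0$, whence $\CH^2(X_\C)_{\alg}\simeq\Ab^2X(\C)=0$ by Proposition~\ref{decodiag}, and therefore $J^3X=0$. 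After writing $S$, $C$ and the conic bundle down explicitly I would check that $C$ is smooth (or has only rational components) and that $X$ is a smooth projective threefold.

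The remaining positive properties follow the template already developed in Section~\ref{secexR}. Over $\C$ the generic fibre of $\pi$ is a conic over $\C(S)$ which acquires a rational point, so $X_\C$ is birational to $S_\C\times\bP^1_\C$ and hence rational. For $\R$-unirationality I would argue as in Proposition~\ref{propunirat}: since $X(\R)\neq\varnothing$, a suitable surface section $T\subset X$ is a real del Pezzo surface whose real points are Zariski dense, hence $\R$-unirational by \cite[Theorem~29.4]{Manin}; base-changing $\pi$ along a unirational parametrisation of $T$ yields a conic bundle with a rational section over an $\R$-rational base, which dominates $X$. The diffeomorphism type of $X(\R)$ I would determine by the explicit fibrewise analysis used in Proposition~\ref{propreallocus}, matching $X(\R)$ with the real locus of an explicit $\R$-rational model.

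The heart of the proof is the failure of retract $\R$-rationality. By Proposition~\ref{trivialunramcoho} it is enough to prove that the unramified cohomology of $X$ is non-trivial, and by Theorem~\ref{thunr3}, once $X(\R)$ is connected (condition~(i)) and condition~(iii) holds (which I expect from \cite[Theorem~6.1]{BW2} and \cite[Theorem~3.22]{BW}, exactly as in the proof of Proposition~\ref{trivialunrcoho}), this amounts to showing that condition~(ii) \emph{fails}. That is, I must show that $\Pic(X_\C)$ is \emph{not} a permutation $G$-module, equivalently (Lemma~\ref{permulemme}) that $H^1(G,\Pic(X_\C))\neq0$, equivalently that the real Artin--Mumford invariant $\Br(X)/\Br(\R)$ is non-zero. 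I would compute $\Pic(X_\C)$ as a $G$-module from the fibration, keeping track of the classes pulled back from $S_\C$, the components of the degenerate fibres over $C_\C$, the fibre-degree homomorphism, and the action of complex conjugation on all of these.

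This computation is the main obstacle, because the two constraints it must reconcile pull in opposite directions. Imposing $J^3X=0$ forces $C$ to be geometrically rational, so that over $\C$ \emph{both} the intermediate Jacobian \emph{and} the Artin--Mumford invariant $H^3(X(\C),\Z)_{\tors}$ vanish (the latter since $X_\C$ is rational, by \cite[Proposition~1]{AM}); the irrationality is thus invisible over $\C$ and must be produced entirely by the Galois action. Yet, as the analysis in Proposition~\ref{trivialunrcoho} shows, for the naive conic bundles over $\bP^2_\R$ the existence over $\C$ of a degree-one class on the generic fibre makes the connecting homomorphism of the fibre-degree sequence
\begin{equation*}
0\to N\to \Pic(X_\C)\to \Z\to 0,\qquad N:=\Ker\bigl(\Pic(X_\C)\to\Z\bigr),
\end{equation*}
surject onto $H^1(G,N)$, so that $H^1(G,\Pic(X_\C))=0$ even though $N\simeq\Z\oplus\Z(1)$ carries the class of a ruling over $C_\C$; moreover a complex-conjugation-invariant degree-one class would directly force $\R$-rationality. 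The construction must therefore place the non-permutation $\Z(1)$-summand inside the base contribution $\pi^*\Pic(S_\C)\subset N$, which means choosing $S$ itself to be $\C$-rational but $\R$-irrational, with $H^1(G,\Pic(S_\C))\neq0$, while still producing a threefold whose real locus is connected; I would arrange the latter by letting the conic degenerate to the pointless real conic over the supernumerary components of $S(\R)$, so that $X(\R)$ surjects onto a single component of $S(\R)$. Checking that the inherited anti-invariant class genuinely survives as a non-permutation summand of $\Pic(X_\C)$, and is not absorbed by the fibrewise extension, is the delicate point on which the whole example turns.
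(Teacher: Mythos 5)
Your overall architecture is the same as the paper's: a conic bundle $X \to S$ over a surface $S$ that is $\C$-rational but whose real locus is disconnected, with pointless fibres over the supernumerary components of $S(\R)$ so that $X(\R)$ is connected, vanishing of $H^3(X(\C),\Z)$ forcing $J^3X=0$, and irrationality produced entirely by the Galois action through the Brauer group. However, the step you yourself single out as ``the delicate point on which the whole example turns'' --- that the anti-invariant class inherited from $S$ genuinely survives in $H^1(G,\Pic(X_\C))$ --- is left unproved, and it is the whole content of the theorem, so this is a genuine gap. It can be closed by a counting argument for which you already have all the pieces: the connecting map $\delta\colon\Z\to H^1(G,N)$ of your fibre-degree sequence has image of order at most $2$ (it is the cyclic subgroup generated by $\delta(1)$ inside a group killed by $2$), so it suffices that $H^1(G,N)$ have order at least $4$; this is achieved by taking $S(\R)$ to have two connected components, for then $\Br(S)\simeq(\Z/2\Z)^3$ and hence $H^1(G,\Pic(S_\C))\simeq(\Z/2\Z)^2$ by the Hochschild--Serre spectral sequence (\ref{HochschildSerre}). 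The paper makes the same count directly with Brauer groups, never computing $\Pic(X_\C)$: since $S$ is regular, $\Br(S)$ injects into $\Br(\R(S))$; the kernel of $\Br(\R(S))\to\Br(X_{\R(S)})$ is exactly $\{0,\alpha\}$ by \cite[Proposition 1.5]{CToj}; hence $\Br(X)$ contains a copy of $(\Z/2\Z)^2$ and $\Br(\R)\to\Br(X)$ cannot be surjective, so $X$ is not retract $\R$-rational by (\ref{Brnr}) and Proposition \ref{trivialunramcoho}.

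Beyond this, two steps would fail as written. First, ``letting the conic degenerate to the pointless real conic over the supernumerary components'' is self-contradictory: a singular conic over $\R$ always has a real point (a real ternary quadratic form of rank at most $2$ is isotropic), so degenerate fibres over those components would create real points of $X$ above them and disconnect $X(\R)$. What you need is that the fibres there be \emph{smooth} pointless conics, which forces the real discriminant locus to avoid those components altogether. In fact your insistence on a nonempty genus-$0$ discriminant is precisely what creates the ``fibrewise absorption'' you worry about, since the ruling classes then enter $N$ and are interchanged with their conjugates; the optimal choice, and the paper's, is an \emph{empty} discriminant, i.e.\ a smooth morphism $f\colon X\to S$ all of whose fibres are conics (the paper builds it from the quaternion class $\alpha=(-1,t-t^2)$, using that $\mathrm{div}(t-t^2)=N_{\C/\R}(D)$ for a divisor $D$ on $S_\C$). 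Then $H^3(X(\C),\Z)=0$ is immediate from the Leray spectral sequence, giving $J^3X=0$ via Proposition \ref{decodiag}, and $N=\pi^*\Pic(S_\C)$ on the nose. Finally, the unirationality template you invoke (Proposition \ref{propunirat}) relies on a del Pezzo surface section and is specific to $S=\bP^2_\R$ and $d=2$; over the paper's $S$ one argues instead by base change along an $\R$-rational surface $T\to S$ (a resolution of $S\times_{\bP^1}\bP^1$ for a map $g$ with $g(\bP^1(\R))\subset(0,1)$), over which $\alpha$ pulls back to a constant, real-trivial, hence trivial class, so that $X_T\to T$ is a Zariski $\bP^1$-bundle and $X_T$ is an $\R$-rational variety dominating $X$.
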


\begin{proof}
Consider $\Sigma:=\{x^2+y^2=(t-t^3)z^2 \}\subset\bP^2_{\R}\times\bA^1_{\R}$, where $[x:y:z]$ are homogeneous coordinates on $\bP^2_{\R}$ and $t$ is the coordinate on $\bA^1_{\R}$. Let $S$ be a smooth compactification of $\Sigma$ such that the projection to $\bA^1_{\R}$ extends to a relatively minimal conic bundle $\pi: S\to\bP^1_{\R}$. The real locus $S(\R)$ is a disjoint union of two spheres: let $K\subset S(\R)$ be the one for which $t\in[0,1]$. 

Let $\alpha \in \Br(\R(S))$ be the class of the quaternion algebra $(-1,t-t^2)$.
As there exists $D \in \mathrm{Div}(S_\C)$ such that $\mathrm{div}(t-t^2)=N_{\C/\R}(D)$,
the conic over~$\R(S)$ defined by this quaternion algebra extends to a
smooth and projective morphism $f:X \to S$ all of whose fibers are conics.
Indeed, letting $\pi:S_\C \to S$ be the natural morphism and $p:\bP(\sE)\to S$ be the projective bundle
associated with $\sE=\sO_S\oplus \pi_*\sO_{S_\C}(-D)$, the global section $(-1)\oplus (t-t^2)$ of
$\sO_S \oplus \sO_S(-N_{\C/\R}(D)) \subset \Sym^2_{\sO_S} \sE = p_*\sO_{\bP(\sE)}(2)$
defines a global section of $\sO_{\bP(\sE)}(2)$ whose zero locus in $\bP(\sE)$ is the sought for~$X$.
We note that $f(X(\R))=K$.

As $S_{\C}$ is rational and as $\alpha|_{S_{\C}}=0$, the threefold $X_{\C}$ is rational.

The Brauer group of a $\C$\nobreakdash-rational smooth proper surface over~$\R$ whose real locus has $s\geq 1$ connected
components is isomorphic to $(\Z/2\Z)^{2s-1}$
(by \cite[Th\'eor\`eme~4]{Silholrat} and the Hochschild--Serre spectral sequence (\ref{HochschildSerre})). Thus $\Br(S) \simeq (\Z/2\Z)^3$.
Since $S$ is regular, $\Br(S)\to\Br(\R(S))$ is injective by \cite[Corollaire 1.8]{GdB2}, and $X_{\R(S)}$ being a non-trivial conic over $\R(S)$, the kernel of $\Br(\R(S))\to\Br(X_{\R(S)})$ has cardinality exactly $2$ \cite[Proposition 1.5]{CToj}.
It follows that the kernel of $\Br(S)\to\Br(X)$ has cardinality at most $2$, hence that $\Br(X)$ contains a subgroup isomorphic to $(\Z/2\Z)^2$. We deduce that $\Br(\R)\to\Br(X)$ is not an isomorphism. Equivalently, in view of (\ref{Brnr}), $H^2(G,\Q/\Z(1))\to H^2_{\nr}(X,\Q/\Z(1))$ is not an isomorphism.
By Proposition~\ref{trivialunramcoho}, $X$ is not retract $\R$-rational.

Let $g:\bP^1_{\R}\to\bP^1_{\R}$ be a morphism such that $g(\bP^1(\R))\subset\mkern-1mu(0,1)$, and $T$ be a resolution of singularities of the base change of $\pi: S\to\bP^1_{\R}$ by $g$.  By \cite[VI Proposition 3.2, Lemma 3.3]{Silhol}, $T$ is $\R$-rational.
Consider the base change $X_T\to T$ of $f$ by $T\to S$. By \cite[Corollaire 7.5]{GdB3} and since $\Br(\R)\stackrel{\sim}\longrightarrow\Br(\bP^2_{\R})$, the pull-back $\alpha_T\in\Br(T)$ of $\alpha$ on $T$  comes from $\Br(\R)$. By the choice of $\alpha$, this class is trivial in restriction to the real points of $T$, hence is trivial. Consequently, $X_T$ is the projectivization of a rank $2$ vector bundle over $T$, hence is $\R$-rational. We have shown that $X$ is $\R$-unirational.

The Leray spectral sequence of $X_{\C}\to S_{\C}$ shows that $H^3(X(\C),\Z)=0$. By comparison with $\ell$-adic cohomology and Proposition \ref{decodiag}, it follows that $J^3X=0$.

It remains to control $X(\R)$. It follows from the equation of $\Sigma$ and the explicit expression of $\alpha$ that a neighbourhood of $X$ along $X(\R)$ may have been chosen to be the blow-up of the affine variety $\{x^2+y^2=(t-t^3), u^2+v^2=(t-t^2)\}\subset\bA^5_{\R}$ at its two singular points $\{x=y=u=v=t=0\}$ and $\{x=y=u=v=t-1=0\}$. With this choice, the substitutions $x\mapsto \frac{x}{\sqrt{1+t}}$ and $y\mapsto \frac{y}{\sqrt{1+t}}$ show that $X(\R)$ is diffeomorphic to the real locus of (a smooth projective compactification $Y$ of) the blow-up of the affine variety $\{x^2+y^2=(t-t^2), u^2+v^2=(t-t^2)\}\subset\bA^5_{\R}$ at its two singular points. The variety $Y$ is $\R$-rational since the projection to the $\R$-rational surface $\{x^2+y^2=(t-t^2)\}$ is a conic bundle with a rational section $\{u=x,v=y\}$. 
This concludes the proof.
\end{proof}

\bibliographystyle{plain}
\bibliography{luroth}
\end{document}